\newtheorem{theorem}{Theorem}[section]
\newtheorem{proposition}[theorem]{Proposition}
\newtheorem{question}[theorem]{Question}
\newtheorem{corollary}[theorem]{Corollary}
\newtheorem{definition}[theorem]{Definition}
\newtheorem{lemma}[theorem]{Lemma}
\newtheorem{example}[theorem]{Example}
\newtheorem{remark}[theorem]{Remark}
\newtheorem{claim}[theorem]{Claim}
\newtheorem*{theoremA}{Theorem A}
\newtheorem*{theoremB}{Theorem B}
\newcommand{\real}{\mathbb{R}}
\newcommand{\Si}{\Sigma}
\newcommand{\si}{\sigma}
\newcommand{\te}{\theta}
\newcommand{\al}{\alpha}
\newcommand{\om}{\omega}
\newcommand{\na}{\nabla}
\newcommand{\ep}{\epsilon}
\newcommand{\ga}{\gamma}
\newcommand{\Ga}{\Gamma}
\newcommand{\be}{\beta}
\newcommand{\de}{\delta}
\newcommand{\ti}{\tilde}
\newcommand{\vol}{\text{\rm vol}}
\newcommand{\lan}{\left\langle}
\newcommand{\p}{\partial}
\newcommand{\ran}{\right\rangle}
\newcommand{\g}{\gamma}
\newcommand{\m}{\mathcal}
\newcommand{\mr}{\mathring}
\title[Homotopy groups, focal points and totally geodesic immersions]{Homotopy groups, focal points and totally geodesic immersions}
\author{S\'ergio Mendon\c ca}
\address{Departamento de An\'alise, Instituto de Matem\'atica,
Universidade Federal Fluminense, Niter\'oi, RJ, CEP 24020-140,
Brasil} \email{sergiomendonca@id.uff.br}
\author{Heudson Mirandola}
\address{Departamento de M\'etodos Matem\'aticos, Instituto de Matem\'atica, Universidade Fe\-deral do Rio de Janeiro, Rio de Janeiro, RJ, CEP 21945-970, Brasil} \email{heudson@impa.br}
\thanks{This work is partially supported by CNPq.}
\subjclass[2000]{Primary 53C20; Secondary 53C42}
\begin{document}
\maketitle
\begin{abstract}  In this paper we consider on a complete Riemannian manifold $M$ an immersed totally geodesic hypersurface $\Si$ existing together with 
an immersed submanifold $N$ without focal points. No curvature condition is needed.
We obtained several connectedness results relating the topologies of $M$ and 
$\Si$ which depend on the codimension of $N$.
 \end{abstract}

\section{\bf Introduction}\label{introduction}

Unless otherwise stated, all manifolds in this paper will be assumed to be connected. Generalizing a result of Hadamard about the intersection of geodesics in a convex surface, Frankel \cite{Fr1} proved the following theorem.

\begin{theoremA}(Frankel \cite{Fr1}) Let $M^m$ be an $m$-dimensional closed Riemannian manifold of positive curvature,  $\Si$ a closed manifold and $f:\Si\to M$ a totally geodesic immersion with codimension at most $m/2$. Then the induced homomorphism $f_*^1:\pi_1(\Si)\to \pi_1(M)$ is surjective.
\end{theoremA}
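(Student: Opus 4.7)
The plan is to reduce surjectivity of $f_*^1$ to a connectedness statement in the universal cover of $M$, and then apply the classical Synge--Frankel second-variation argument.

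Since $M$ is closed and positively curved, Bonnet--Myers implies that $\pi_1(M)$ is finite and the universal cover $\pi\colon \wM\to M$ is compact. Form the pullback $\wS:=\Si\times_M \wM$, with projection $p\colon \wS\to\Si$ and canonical map $\wf\colon \wS\to\wM$; then $p$ is a covering of degree $|\pi_1(M)|$, and a standard monodromy computation identifies its set of connected components with $\pi_1(M)/f_*^1(\pi_1(\Si))$. Thus $f_*^1$ is surjective if and only if $\wS$ is connected. Each component of $\wS$ is a finite cover of $\Si$, hence compact, and maps to $\wM$ via a totally geodesic immersion.

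Assume, for contradiction, that $\wS$ has two distinct components $C_1$ and $C_2$. By compactness of $C_1\times C_2$, the function $(x,y)\mapsto d_{\wM}(\wf(x),\wf(y))$ attains a minimum $L\geq 0$ at some $(x_*,y_*)$. In the generic case $L>0$, let $\g\colon[0,L]\to\wM$ be a unit-speed minimizing geodesic from $\wf(x_*)$ to $\wf(y_*)$; the first variation forces $\g$ to meet $\wf(C_i)$ orthogonally at both endpoints. Let $V_1\subset T_{\wf(y_*)}\wM$ be the parallel transport along $\g$ of $d\wf(T_{x_*}C_1)$, and set $V_2:=d\wf(T_{y_*}C_2)$. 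Both lie in $\g'(L)^{\perp}$, of dimension $m-1$, while $\dim V_1+\dim V_2=2\dim\Si\geq m$; hence they meet in a nonzero vector $w$. Let $W$ be the parallel field along $\g$ with $W(L)=w$, so $W(0)\in d\wf(T_{x_*}C_1)$. A variation of $\g$ with variation field $W$, whose endpoints move along geodesics tangent to $\wf(C_i)$---which remain in $\wf(C_i)$ by total geodesicity---has second variation
\[
E''(0)=-\int_0^L \lan R(W,\g')\g',W\ran\,dt,
\]
since $|W'|^2\equiv 0$ by parallelism and the boundary terms vanish by total geodesicity. Positive curvature makes this strictly negative, contradicting the minimality of $\g$.

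The main obstacle is the degenerate case $L=0$, in which $\wf(x_*)=\wf(y_*)$ although $x_*\in C_1$ and $y_*\in C_2$ lie in distinct components. For an embedding this cannot occur, since distinct components of $\wS$ then map to disjoint subsets of $\wM$; so the $L=0$ case is precisely the immersion-specific complication. The codimension bound still yields a nonzero $w\in d\wf(T_{x_*}C_1)\cap d\wf(T_{y_*}C_2)\subset T_{\wf(x_*)}\wM$, and the common direction gives a geodesic of $\wM$ tangent to both totally geodesic sheets. Handling this requires a separate argument---for instance, extending the local coincidence of the two sheets by totally geodesic continuation to produce a path in $\wS$ joining $C_1$ to $C_2$, contradicting the hypothesis that they are distinct components. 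Modulo this subtlety, the proof reduces to Frankel's classical second-variation argument.
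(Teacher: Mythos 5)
The paper does not supply a proof of Theorem~A---it is cited directly from Frankel's article~[Fr]---so I assess your argument on its own terms and against the classical Frankel strategy. Your reduction of the surjectivity of $f_*^1$ to connectedness of the pullback $\wS=\Si\times_M\wM$ is correct, and your treatment of the case $L>0$ is exactly the standard second-variation argument: orthogonality at the endpoints by first variation, a nonzero parallel field from the codimension count $2\dim\Si\ge m>m-1$, vanishing boundary terms by total geodesicity, and strict negativity of $E''(0)$ by positive curvature.

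The case $L=0$ that you flag is, however, a genuine gap, and the ``totally geodesic continuation'' you sketch cannot be made to work. Take $M=\real P^3$ with the round metric, $\Si=S^2$, and $f$ the composition $S^2\to\real P^2\hookrightarrow\real P^3$ of the double covering with the totally geodesic inclusion. This is a totally geodesic immersion of a closed manifold, of codimension $1\le 3/2$, with $\pi_1(\Si)=0$ and $\pi_1(M)=\Bbb Z_2$, so $f_*^1$ is not surjective. In your framework $\wM=S^3$ and $\wS=S^2\sqcup S^2$ has two components $C_1,C_2$, each mapped by $\wf$ diffeomorphically onto the \emph{same} totally geodesic great sphere in $S^3$; hence $L=0$. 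The common tangent vector $w$ at a coincidence point, and the geodesic it generates, lie in both images $\wf(C_1)$ and $\wf(C_2)$, but the lifts of that geodesic back to $\wS$ remain in $C_1$ and in $C_2$ separately: a coincidence of images in $\wM$ does not produce a path in $\wS$ joining the two components. So the plan you propose for $L=0$ would, if it worked, prove a false statement.

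What your argument \emph{does} establish is the correct version for immersions: writing $\pi:\wM\to M$ for the universal covering and $\iota:f(\Si)\hookrightarrow M$ for the inclusion of the image, $\iota_*^1:\pi_1(f(\Si))\to\pi_1(M)$ is surjective. This is equivalent to connectedness of $\pi^{-1}(f(\Si))=\wf(\wS)=\bigcup_i\wf(C_i)$; if some pair $\wf(C_i),\wf(C_j)$ were disjoint they would be at positive distance (compactness), and your second-variation step gives the contradiction, so all these compact connected sets meet pairwise and their union is connected. In particular, when $f$ is an embedding the map $\wf$ is injective, distinct components of $\wS$ have disjoint images, the case $L=0$ cannot occur, and $f_*^1=\iota_*^1$ is surjective; so your proof is complete for embeddings. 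To salvage Theorem~A exactly as stated one must either assume $f$ injective or replace $f_*^1$ by $\iota_*^1$---the $L=0$ case is not a technicality to be patched but the precise point at which the two formulations diverge.
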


There have been many generalizations of this theorem. Recently the relationship between the topology of totally geodesic submanifolds and the topology of ambient manifolds of positive curvature has been deepened in \cite{W} and \cite{FMR}.

The assumption of positive curvature is essential in the above results. In fact, given any manifold $M$ and any embedded closed submanifold $\Si$, one can construct metrics on $M$ such that $\Si$ is totally geodesic. Thus just the existence of a totally geodesic hypersurface does not imply any topological restrictions. Our idea here is to replace
the curvature hypothesis by the additional assumption of the existence of a complete isometric immersion $g:N\to M$ without focal points. Some examples in this introduction will illustrate this situation.

The work of Hermann (\cite{he}) and its generalization by Bolton (\cite{bo}) show that just the existence of a complete isometric immersion $g:N\to M$ without focal points strongly relates the topologies of $M$ and $N$ (see Theorem B and Corollary \ref{bolton2} below). Our expectation when we began this
work was that the union of the two hypotheses (existence of a totally geodesic hypersurface and a
submanifold without focal points) should also restrict the relation between the topologies of $M$ and $\Si$. In fact we obtain strong topological restrictions and even the compactness of $N$.

Let $M$ and $N$ be Riemannian manifolds and $g:N\to M$ an immersion. We will denote by $\m N_g$ and $\m N_g^1$, respectively, the normal bundle and the unit normal bundle of $g$. 
We recall the result of Bolton, which extends the work of Hermann (\cite{he}):

\begin{theoremB}(Bolton \cite{bo}) Let $M$ and $N$ be complete Riemannian manifolds and $g:N\to M$ an isometric immersion without focal points. Then the normal exponential map  $\exp^\perp:\m N_g\to M$ is a covering map.
\end{theoremB}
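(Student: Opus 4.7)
The plan is to convert $\exp^\perp$ into a local isometry by pulling back the Riemannian metric of $M$ along it, and then to conclude via the classical fact that a local isometry from a complete Riemannian manifold to a connected one is automatically a covering map. The non-focal hypothesis is equivalent, by the standard Jacobi-field characterization of focal points, to the differential $d(\exp^\perp)_v$ being an isomorphism at every $v \in \m N_g$, so $\exp^\perp$ is a local diffeomorphism. Denoting by $\ti h$ the pullback along $\exp^\perp$ of the ambient metric, this is a bona fide Riemannian metric on $\m N_g$, and $\exp^\perp : (\m N_g, \ti h) \to M$ is a local isometry by construction.

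The central step is to prove that $(\m N_g, \ti h)$ is geodesically complete. The fundamental geometric input is that, for every $v \in \m N_g$, the radial line $t \mapsto tv$ in the fiber over $\pi(v)$ is a geodesic in $(\m N_g, \ti h)$ defined on all of $\real$: its image under $\exp^\perp$ is the normal geodesic $\exp_{\pi(v)}(tv)$ in $M$, which lives on all of $\real$ since $M$ is complete, and local isometries preserve geodesics. Hence every ray through the zero section of $\m N_g$ is complete. Now let $\ti\ga : [0, T) \to \m N_g$ be an arbitrary unit-speed geodesic with $T < \infty$. Its projection $\ga = \exp^\perp \circ \ti\ga$ is a unit-speed geodesic in $M$ that extends to $\ga(T)$ by completeness of $M$; if one can show that $\ti\ga(t)$ stays in a compact subset of $\m N_g$ as $t \to T^-$, then it accumulates at some $\bar v \in (\exp^\perp)^{-1}(\ga(T))$, and the local diffeomorphism property allows one to continue the lift past $T$, contradicting maximality.

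The main obstacle, therefore, is to prevent $\ti\ga$ from escaping to infinity in the total space of the normal bundle. I would attack this by decomposing along the bundle projection $\pi : \m N_g \to N$: first bound the $N$-length of the base curve $\pi \circ \ti\ga$ in terms of $T$ so that completeness of $N$ confines $\pi(\ti\ga(t))$ to a compact set, and then bound the fiber norm $|\ti\ga(t)|$, for instance by comparing it with the $M$-distance from $g(\pi(\ti\ga(t)))$ to $\ga(t)$ (both of which remain in compact sets). Once completeness of $(\m N_g, \ti h)$ is in hand, surjectivity of $\exp^\perp$ follows from openness of the image (local diffeomorphism) together with closedness (minimizing geodesics in $M$ ending at boundary points of the image lift back to $\m N_g$ via completeness of $\ti h$), and the evenly-covered neighborhood structure around any $q \in M$ is obtained by taking small normal $\ti h$-balls around each preimage of $q$, completing the identification of $\exp^\perp$ as a covering map.
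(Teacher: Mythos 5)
The paper does not supply a proof of Theorem~B; it is quoted from Bolton's article, so I will assess your argument on its own. The framework you set up is the standard and correct one: pull back the metric of $M$ along $\exp^\perp$ (a local diffeomorphism by the absence of focal points), show that the resulting metric $\ti h$ on $\m N_g$ is complete, and invoke the theorem that a local isometry out of a complete Riemannian manifold onto a connected one is a Riemannian covering. Your observation that every radial line is a complete $\ti h$-geodesic is also correct; together with the normal Gauss lemma it gives that the fiber-norm function $r(p,v)=|v|$ is $1$-Lipschitz for $\ti h$, hence bounded along any finite-time unit-speed geodesic. Up to this point the proposal is sound.

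The missing step is exactly the one you identify as the ``main obstacle'': confining the base $\pi\circ\ti\gamma$ to a compact subset of $N$. You propose to bound its $N$-length in terms of $T$, but no argument is given, and the natural pointwise estimate is false: the bundle projection $\pi:(\m N_g,\ti h)\to N$ need not be distance-non-increasing. Concretely, take $g:\real\to\hy^2$ a unit-speed horocycle (an isometric immersion without focal points, with $M$ and $N$ complete). In the Fermi-type coordinates $(t,s)$ on $\m N_g\cong\real^2$ the pullback metric is $e^{-2s}\,dt^2+ds^2$, so $d\pi$ dilates the $\partial_t$-direction by the factor $e^{s}$, which is unbounded for $s>0$; there is no Lipschitz constant for $\pi$. (In this example $\ti h$ is of course still complete --- it is just another model of $\hy^2$ --- but that has to be seen by some other means.) Nor does a detour through $M$ save the argument: the curve $g(\pi(\ti\gamma(t)))$ does stay in a compact subset of $M$, by the triangle inequality together with the fiber-norm bound, but since $g$ is not assumed proper this does not confine $\pi(\ti\gamma(t))$ in $N$ (think of a dense geodesic on a flat torus). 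Finally, the clause about bounding the fiber norm ``by comparing it with the $M$-distance from $g(\pi(\ti\gamma(t)))$ to $\gamma(t)$'' only yields a lower bound for $|\ti\gamma(t)|$, because the normal geodesic of length $|\ti\gamma(t)|$ need not be minimizing in $M$; the correct tool for the fiber-norm bound is the Gauss-lemma Lipschitz estimate mentioned above, not a comparison with ambient distance. In short, the proposal correctly sets the stage but does not establish the completeness of the pullback metric, which is the genuine content of Bolton's theorem.
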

\begin{remark} \label{bolton} As a direct consequence of Theorem B it follows that if $M$ is simply-connected then $\exp^\perp:\m N_g\to M$ is a diffeomorphism, which implies that: (a) $g$ is an embedding and a homotopy equivalence, since $g=
\exp^\perp\circ j_0$ where $j_0:N\to \m N_g$ is the homotopy equivalence $j_0(x)=(x,0)$; (b) $N$ is simply-connected; (c) The boundary $S_\ep(g(N))$ of 
an $\ep$-tubular neighborhood of $g(N)$ is diffeomorphic to $\m N_g^1$. 
\end{remark}

Given manifolds $S, T$ and a smooth map $h:S\to T$ we will always denote by $\ti S, \ti T$ their universal covers 
with induced metrics, and by $\ti h:\ti S\to\ti T$ any lifting of $h$. Using the corresponding metrics we 
see that if an isometric immersion $g:N\to M$ is free of focal points then any lifting $\ti g:\ti N\to \ti M$  
has the same property. Applying Theorem B and Remark \ref{bolton} we have the  following

\begin{corollary} \label{bolton2} Let $g:N\to M$ be as in Theorem B. Then any lifting 
$\ti g:\ti N\to\ti M$ is an embedding and a homotopy equivalence. Furthermore 
the homomorphism $g_*^i:\pi_i(N)\to \pi_i(M)$ is  an isomorphism for $i\geq 2$ and a monomorphism  for $i=1$.
\end{corollary}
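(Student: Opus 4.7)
The plan is to reduce everything to the simply-connected situation already handled by Theorem B together with Remark \ref{bolton}, applied to the lifting $\ti g:\ti N\to \ti M$. I would first verify that $\ti g$ satisfies the hypotheses of Theorem B in its strong form: the covering projections $\pi_N:\ti N\to N$ and $\pi_M:\ti M\to M$ are local isometries, so $\ti g$ is an isometric immersion, and completeness of $N,M$ lifts to $\ti N,\ti M$ via the induced metrics. The paper already observes that the ``no focal points'' property is inherited by the lift. Since $\ti M$ is simply-connected, Remark \ref{bolton}(a) immediately gives that $\ti g$ is an embedding and a homotopy equivalence, proving the first assertion.

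For the higher homotopy groups, I would exploit the commutative square $g\circ \pi_N=\pi_M\circ \ti g$. For $i\geq 2$ the covering projections induce isomorphisms $(\pi_N)_*^i$ and $(\pi_M)_*^i$, while $\ti g_*^i$ is an isomorphism because $\ti g$ is a homotopy equivalence. Chasing the diagram forces $g_*^i$ to be an isomorphism as well.

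For $i=1$, the argument proceeds via the equivariance of $\ti g$ under deck transformations. Every $\al\in\pi_1(N)$ determines a deck transformation $\phi_\al$ of $\ti N$, and there is a unique deck transformation $\psi$ of $\ti M$, corresponding to $g_*^1(\al)\in\pi_1(M)$, such that $\ti g\circ \phi_\al=\psi\circ \ti g$. If $g_*^1(\al)=0$, then $\psi=\id$, hence $\ti g\circ \phi_\al=\ti g$; since $\ti g$ is injective (being an embedding by the first part), $\phi_\al=\id$ and therefore $\al=0$. Thus $g_*^1$ is a monomorphism.

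The one point that merits a little care, and which I expect to be the main (modest) subtlety, is the deck-transformation equivariance of $\ti g$: one must choose basepoints and the lift consistently so that the correspondence $\al\mapsto\psi$ is actually realized as stated. Once that is set up, injectivity of the embedding $\ti g$ closes the argument for $i=1$, and the rest is a formal diagram chase.
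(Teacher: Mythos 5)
Your proposal is correct and follows essentially the same route the paper intends: lift $g$ to $\ti g$ on universal covers with induced metrics, observe that the no-focal-points property lifts, and invoke Theorem B / Remark \ref{bolton} on the simply-connected $\ti M$ to conclude $\ti g$ is an embedding and homotopy equivalence, from which the $\pi_i$ statements follow by standard covering-space theory. Your deck-transformation phrasing of the $\pi_1$-monomorphism step is a minor stylistic variant of the ``lift the loop and use injectivity of $\ti g$'' argument the paper itself employs later in the proof of Theorem \ref{completeness2}, and both hinge on the same fact (injectivity of the lift), so there is no substantive difference.
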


Now we recall some topological definitions. Consider path-connected topological spaces $A, B, C$ with $B\subset C$ and continuous maps $\al:A\to B$, $\iota:B\to C$ and
$\beta:A\to C$ where $\iota$ is the inclusion map and $\beta=\iota\circ\al$. It is said that $\al$ is $k$-connected if $\al_*^i:\pi_i(A)\to\pi_i(B)$ is an isomorphism for $1\le i\le k-1$
and an epimorphism for $i=k$. We say that the pair $(C,B)$ is $k$-connected if the inclusion $\iota$ is $k$-connected, and it is well known that this is equivalent to the fact that the relative homotopy groups $\pi_i(C,B)=0$
 \ for $1\le i\le k$. If $\al$ is a homeomorphism, the $k$-connectedness of $(C,B)$ is 
 clearly equivalent to the $k$-connectedness of $\be$. However, without this hypothesis, the $k$-connectedness of $\beta$ is not equivalent to the $k$-connectedness of the pair $(C,\al(A))$, even if $\al(A)=B$. For example, consider the inclusion of a $2$-torus $\iota:T\to \real^3$,
and consider the universal covering $\al:\real^2\to T$. Another example is to consider the universal covering
$\al:S^n\to\real P^n$ of the $n$-dimensional real projective space and the identity map $\iota:\real P^n\to\real P^n$. We say that
$C$ is $k$-connected if the pair $(C,p)$ is $k$-connected for some $p\in C$. In accordance with Remark \ref{bolton},  if $N$ and $M$ are complete manifolds 
and $g:N\to M$ is an 
isometric immersion without focal points, then the inclusion $\iota:S_\ep(\ti g(\ti N))\to \ti M$ is $(m-n-1)$-connected.

Our first result is the following.

\begin{theorem} \label{tgmth} Let $f:\Si^{m-1}\to M^m$ be a totally geodesic immersion of a closed manifold $\Si$ of finite fundamental group in a complete Riemannian manifold $M$ and $g:N^n\to M^m$ be an isometric immersion without focal points of a complete manifold $N$. Then $N$ is compact with finite fundamental group.  
Furthermore we have:
\begin{enumerate}[(1)]
\item\label{Ncompactafinita} if $m-n=1$ then $\Si$ and $N$ have diffeomorphic universal covers, any lifting $\ti f: \ti \Si\to \ti M$ is an embedding and a homotopy equivalence; 
furthermore the homomorphism $f_*^1:\pi_1(\Si)\to \pi_1(M)$ is a monomorphism;
\item\label{it3} if $m-n\ge 2$ then it holds that:
\begin{enumerate}[(a)]
\item \label{fgintersection}$f(\Si)\cap g(N)=\emptyset$;
\item $M$ is noncompact with finite fundamental group;
\item\label{ng1} $\Si$ and the normal unit bundle $\m N_g^1$ have diffeomorphic universal covers;
\item\label{it5} the homomorphism $\iota_*^1:\pi_1(f(\Si))\to \pi_1(M)$, induced by the inclusion $\iota:f(\Si)\to M$, is surjective;
\item \label{pi2infinito} if $m-n=2$ then $\pi_2(M)$ and $\pi_2(N)$ are infinite, $f_*^2:\pi_2(\Si)\to \pi_2(M)$ is a monomorphism and the quotient group $\pi_2(M)/\pi_2(\Si)=\Bbb{Z}$;
\end{enumerate}
\item\label{m-n3} if $m-n\geq 3$ then $f_*^1:\pi_1(\Si)\to \pi_1(M)$ is a monomorphism, any lifting $\ti f:\ti\Si\to \ti M$ is 
an embedding and $\ti f(\ti\Si)$ is diffeomorphic to $S_\ep(\ti g(\ti N))$ via a natural isotopy on $\ti M$, hence 
$\ti f:\ti\Si\to \ti M$ is $(m-n-1)$-connected.
\end{enumerate}
\end{theorem}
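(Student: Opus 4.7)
The plan is to lift everything to the universal cover of $M$, where Theorem B gives that $\exp^\perp:\m N_{\tilde g}\to\tilde M$ is a diffeomorphism; I will then identify $\tilde M$ with the vector bundle $\m N_{\tilde g}$ over $\tilde N$ (with $\tilde g(\tilde N)$ as the zero section). Since $\Si$ is closed and $\pi_1(\Si)$ is finite, $\tilde\Si$ is compact, so $\tilde f(\tilde\Si)$ is a compact, totally geodesic image inside this vector bundle.

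The whole argument pivots on the following \emph{unbounded ray principle}: if at some $\tilde f(p)=\exp(t_0 w)$ a nonzero tangent vector of $\tilde f(\tilde\Si)$ is parallel to the velocity $\gamma'_w(t_0)$ of the normal geodesic $\gamma_w(t)=\exp(tw)$ issuing from $\tilde g(\tilde N)$, then total geodesy forces $\gamma_w(\mathbb R)\subset\tilde f(\tilde\Si)$; but this geodesic corresponds under $\exp^\perp$ to an entire straight line in a fiber of $\m N_{\tilde g}$, which leaves every compact subset of $\tilde M$ because $\exp^\perp$ is a proper diffeomorphism, contradicting compactness of $\tilde f(\tilde\Si)$. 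Two consequences drop out: first, when $m-n\ge 2$, the dimension count $(m-1)+(m-n)-m=m-n-1\ge 1$ shows any intersection $\tilde f(a)=\tilde g(b)$ would contribute such a radial tangent, so $f(\Si)\cap g(N)=\emptyset$, as asserted in (\ref{fgintersection}); second, the ``radial projection'' $h:\tilde\Si\to\m N_{\tilde g}^1$ obtained by composing $\tilde f$ with $v\mapsto v/|v|$ (respectively, the bundle projection $\pi\circ\tilde f:\tilde\Si\to\tilde N$ in case $m-n=1$) is a local diffeomorphism, because any vanishing differential would furnish a radial tangent and hence the forbidden unbounded ray.

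A local diffeomorphism from compact $\tilde\Si$ into a connected manifold has open and closed image, hence is a surjective finite covering. Consequently $\tilde N$ (in case $m-n=1$) or $\m N_{\tilde g}^1$ (in case $m-n\ge 2$) is compact, and in the latter situation compactness of the sphere bundle forces $\tilde N$ compact; thus $N=\tilde N/\pi_1(N)$ is compact with finite fundamental group. The case-by-case conclusions then reduce to standard covering-space arguments. For (\ref{Ncompactafinita}), $\tilde\Si$ and $\tilde N$ are both simply connected, so $\pi\circ\tilde f$ is a diffeomorphism; writing $\tilde M\cong\tilde N\times\mathbb R$ exhibits $\tilde f(\tilde\Si)$ as a graph, yielding an embedding and a homotopy equivalence, and deck equivariance of $\tilde f$ gives the monomorphism $f_*^1$. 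For (\ref{m-n3}) the sphere bundle $\m N_{\tilde g}^1$ is simply connected (both fiber $S^{m-n-1}$ with $m-n-1\ge 2$ and base are), so $h$ is a diffeomorphism; radial rescaling in each fiber then gives the isotopy from $\tilde f(\tilde\Si)$ to $S_\ep(\tilde g(\tilde N))$, and the $(m-n-1)$-connectedness follows from the discussion preceding the theorem. Assertion (\ref{ng1}) is immediate, since $\tilde\Si$ --- a simply-connected finite cover of $\m N_{\tilde g}^1$, which itself covers $\m N_g^1$ --- is the universal cover of $\m N_g^1$. Assertion (\ref{pi2infinito}) for $m-n=2$ follows from the long exact sequence of the circle bundle $S^1\to\m N_{\tilde g}^1\to\tilde N$, using that $h$ is a finite cover (so $\pi_2(\tilde\Si)\cong\pi_2(\m N_{\tilde g}^1)$) and that compactness of $\m N_{\tilde g}^1$ forces $\pi_1(\m N_{\tilde g}^1)$ finite, hence the image of $\pi_2(\tilde N)\to\mathbb Z$ in the sequence has finite index in $\mathbb Z$.

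The main obstacle I anticipate is the combined assertion that $M$ is noncompact with finite fundamental group and that $\iota_*^1$ is surjective. Noncompactness of $\tilde M$ is immediate from its rank-$\ge 2$ bundle structure over the now-compact $\tilde N$, so $M$ is noncompact as soon as $\pi_1(M)$ is finite. For (\ref{it5}), equivalent to connectedness of $\pi^{-1}(f(\Si))\subset\tilde M$, I would apply the unbounded-ray principle to every deck translate $\sigma\cdot\tilde f(\tilde\Si)$ to see that each still projects surjectively onto $\m N_{\tilde g}^1$ and hence ``encloses'' $\tilde g(\tilde N)$; a separation argument using this wrapping property would then collapse the union $\pi^{-1}(f(\Si))$ into a single connected component. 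Once (\ref{it5}) is in hand, $\pi_1(M)$ appears as a quotient of $\pi_1(f(\Si))$, which is finite because the universal cover of $\m N_g^1\cong S_\ep(g(N))$ --- namely the already-compact $\m N_{\tilde g}^1$ --- is compact. The delicate piece of bookkeeping is the enclosure/separation argument, especially when $m-n=2$ and the sphere-bundle fiber is a circle.
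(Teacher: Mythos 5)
Your global strategy --- lift to the universal cover, use Theorem B to identify $\ti M$ with the vector bundle $\m N_{\ti g}$, and then exploit compactness of $\ti\Si$ against the ``unbounded ray'' one would get from a tangent vector pointing radially --- is exactly the paper's. Your unbounded-ray principle is Lemma~\ref{orthogon}, your radial projection $h$ (resp.\ $\pi\circ\ti f$ when $m-n=1$) is the paper's map $F$ (resp.\ $G=\rho\circ f$) from Lemma~\ref{FG}, and the covering-map and exact-sequence bookkeeping for items (\ref{Ncompactafinita}), (\ref{ng1}), (\ref{pi2infinito}) and (\ref{m-n3}) coincides with the paper's treatment in Theorem~\ref{completeness}, Proposition~\ref{12linha} and Theorem~\ref{completeness2}.

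There is, however, a genuine gap in your account of item (2)(b)/(d). You propose to prove $\iota_*^1$ surjective \emph{first}, and then deduce that $\pi_1(M)$ is finite because ``$\pi_1(M)$ appears as a quotient of $\pi_1(f(\Si))$, which is finite''. But $f$ is only an immersion; $f(\Si)$ need not be a manifold, and nothing you have shown controls $\pi_1(f(\Si))$. Compactness of $\m N_{\ti g}^1$ tells you $\pi_1(\m N_g^1)$ is finite, not $\pi_1(f(\Si))$. The paper avoids this trap by proving finiteness of $\pi_1(M)$ \emph{before} surjectivity, and by a different mechanism: from the inclusion (\ref{inclusion}) it gets $\mr g(\ti N)\subset W_{\ti f,\ti g}$ for \emph{every} lifting $\mr g$ of $g$, hence $P^{-1}(\{g(q)\})\subset W_{\ti f,\ti g}$; since $W_{\ti f,\ti g}$ is compact and $P^{-1}(\{g(q)\})$ is discrete, this fiber is finite, i.e.\ $\pi_1(M)$ is finite. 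Your enclosure idea is in fact strong enough to yield this, but as phrased you never extract it, and the route through $\pi_1(f(\Si))$ does not close. Separately, your ``separation argument'' for the surjectivity of $\iota_*^1$ is left as a sketch; the paper's actual argument requires the sets $W_{\ti f,\ti g}$ and $w_{\ti f,\ti g}$, the topological Lemma~\ref{topology}, and a volume comparison using that deck transformations are isometries --- that is the delicate piece you flagged, and it is not a formality.
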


\begin{remark} \label{remarknaofalaremos} Note that Theorem \ref{tgmth} provides strong informations on $\ti f:\ti \Si\to \ti M$, which leads 
to immediate conclusions on the homomorphisms $f_*^i:\pi_i(\Si)\to \pi_i(M)$ for $i\ge 2$  
and we will not detail all of them here. For example,  if $f$ is an embedding and $m-n\ge 3$ we may use Items 
\ref{it3}-\ref{it5} and \ref{m-n3} to conclude that $f$ is $(m-n-1)$-connected.
\end{remark}

\begin{remark} \label{dimensions} In several points Theorem \ref{tgmth} may not be improved:
 \begin{enumerate} [(i)]
\item Example \ref{torus} below shows that the finiteness of
$\pi_1(\Si)$ is an essential assumption;
\item for the case $m-n=1$, Example \ref{example m-n1} below shows that $f(\Si)$ may intersect $g(N)$ and $M$ may be compact with infinite fundamental group (compare with Item \ref{it3}-\ref{fgintersection}); moreover, $f_*^1$ may be non-surjective even when $f$ is an embedding;
\item \label{remarknm2} for the case $m-n=2$, Example \ref{examplebundle} will show that the map $f_*^1$ could be non-injective or non-surjective;
\item \label{remarknm3}for the case $m-n\geq 3$, Example \ref{euclidean} will show that $f_*^{m-n-1}$ may be non-injective (compare with Remark \ref{remarknaofalaremos}); Example \ref{examplewarped} will show that $f_*^1$ may be non-surjective, and it also shows that the embeddedness of $f$ is essential to obtain the $(m-n-1)$-connectedness as in Remark 
\ref{remarknaofalaremos}.
\end{enumerate}
\end{remark}

We think it is interesting to consider the following question.
\begin{question} Assume the hypotheses of Theorem \ref{tgmth} with $m-n\ge 2$. Is it true that $g:N\to M$ is a homotopy equivalence?
\end{question}
By Corollary \ref{bolton2}, this question is equivalent to asking if $g^1_*:\pi_1(N)\to \pi_1(M)$ is
 an epimorphism. We notice that in the proof of Theorem 1.1 in [MM] this question was answered positively  in the very particular case that $N$ is a point. Example \ref{example m-n1} below shows that the answer would 
 be negative if we take $m-n=1$.

The next examples were cited in Remark \ref{dimensions}.
\begin{example}\label{example m-n1}
Consider the Riemannian product $M=N'\times S^1$ where $N'$ is a closed manifold with finite fundamental group and $S^1$ is a round circle. Note that $\Sigma=N' \times\{q\}$ has finite fundamental group. Take $N=\Si$ and let $f=g:N\to M$ be the inclusion map. The embedding $g$ is free of focal points and $f$ is totally geodesic. Thus Theorem \ref{tgmth} applies for $m-n=1$. Note that $f_*^1$ is not surjective, $f(\Si)\cap g(N)\not=\emptyset$ and $M$ is compact with infinite fundamental group.
\end{example}

\begin{example}\label{torus} Consider $M=N'\times T^k$ where $N'$ is a closed manifold with infinite
fundamental group and $T^k$ is a $k$-dimensional flat torus with $k\ge 2$. Let $f:\Sigma=N'\times T^{k-1}\times\{q\}\to M$ be the inclusion map. Notice that $f$ is a totally geodesic embedding. Choose $p\in T^{k-1}$. Let $g:N=N'\times \{(p,q)\}\to M$ be the inclusion map. The embedding $g$ is free of focal points with codimension $k\ge 2$. Note
 that $\pi_1(\Si)$ is infinite, hence Theorem \ref{tgmth} does not apply. In fact, several conclusions in this theorem  fail: $N$ has infinite
 fundamental group; $f(\Si)\cap g(N)\not=\emptyset$; $M$ is compact with infinite fundamental group; $f$ is an embedding and $f_*^1$ is not surjective.
\end{example}

\begin{example}\label{examplebundle}
Let $p:M^m\to N^n$ be a vector bundle over a manifold $N$ and fix a smooth fiber metric $x\in N\mapsto \left<\,,\,\right>_x$ where $\left<\,,\,\right>_x$ is an inner product on the fiber $V_x$. Let
 $S_x\subset V_x$ be the unit sphere centered at the origin and set $\m S_1=\cup_{x\in N}S_x$.
 Let $g:N\to g(N)=N_0\subset M$ be the null section.
 Proposition \ref{bundle} will show that there exists a Riemannian metric $\om$ on $M$ such that
 $\m S_1$ is a totally geodesic hypersurface of $M$ and $g$ is free of focal points. It
 holds also that $\exp^\perp:\m N_g\to M$ is a diffeomorphism. If further $N$ is compact then $M$ is
 complete.
 This example may be used to see that some conclusions in Theorem \ref{tgmth}
may not be improved (see Remark \ref{dimensions}-\ref{remarknm2}). In fact, we first
consider the particular case that $M$ is the tangent bundle $M=TS^2$ equipped with
the metric $\om$ and
$\Si=\m S_1=T_{1}S^2=\{(p,v)\in TS^2\bigm||v|=1\}$.
Let $f:\Si\to M$ be the totally geodesic inclusion map and $g:N\to N_0=N\times\{0\}\subset TS^2$  the
free of focal points null section. It is well known that $\Si$ is diffeomorphic to $SO_3$. We have that $\pi_1(\Si)=\pi_1(SO_3)=
\Bbb Z_2$ and $\pi_1(M)=\pi_1(TS^2)=0$.  Furthermore, it holds that $\pi_2(\Si)=\pi_2(SO_3)=0$ and $\pi_2(TS^2)=\Bbb Z$. Thus we conclude that $f_*^1$ is not injective and the quotient $\pi_2(M)/\pi_2(\Si)=\Bbb Z$ (in accordance with Item 
\ref{it3}-\ref{pi2infinito} in Theorem \ref{tgmth}).
 Now we consider the case that $(M,\om)=(T(\Bbb RP^2),\om)$
 and $\m S_1=T_1(\Bbb RP^2)=\{(p,v)\in T(\Bbb RP^2)\bigm||v|=1\}$. Let $f:\Si=SU_2\cong S^3\to \m S_1\subset M$ be the universal covering with the induced metric and $g: \real P^2\to \real P^2\times \{0\}\subset M$ the natural embedding.
Since $\pi_1(\Si)=0$ and $\pi_1(M)=\pi_1(T(\real P^2))=\Bbb Z_2$, the map $f_*^1$ is not surjective.
\end{example}

The following example is a typical situation where Theorem \ref{tgmth} holds and it will be used
in Example \ref{examplewarped}.

\begin{example}\label{euclidean} Define $M=\mathbb R^m$, with $m\ge 2$, endowed with the metric $ds^2=dr^2+\sigma^2(r)d\theta^2$ where $d\te^2$ is the standard metric on the unit sphere $S^{m-1}$ and $\si:[0,\infty)\to [0,\infty)$ is a smooth function satisfying $\si(r)>0$ for all $r>0$, $\si(0)=0$, $\si'(0)=1$ and $\si'(1)=0$. We know that $M$ is complete, the origin {\rm O} is a pole (in particular, $N={\{\rm{ O }\}}$ is free of focal points) and the sphere $\Si=S^{m-1}$  is totally geodesic. Notice that $\pi_{m-1}(\Si)=\Bbb Z$ and $\pi_{m-1}(M)=0$. Thus $f_*^{m-1}$ is not injective (see Remark \ref{dimensions}-\ref{remarknm3}).
\end{example}

\begin{example} \label{examplewarped} Take $B=(\mathbb{R}^k,ds^2)$, with $k\ge 3$, and $ds^2=dr^2+\sigma^2(r)d\theta^2$ being the metric introduced in Example \ref{euclidean}. Let $\m S=S^{k-1}\subset B$ be the totally geodesic unit sphere and $N'$ any Riemannian manifold. Consider a warped product $M=B\times_\rho N'$ and assume that the gradient $(\nabla \rho)|_{\m S}$ is tangent to $\m S$ (for example, take $\rho(x)=\si(|x|^2)$). The manifold $M$ is complete if $N'$ is complete (see Lemma 40 
in Chapter 7 of \cite {ON}). 
Let $g:N\to \{0\}\times N'$ and $f:\Si\to \m S\times N'$ be any covering maps. Proposition \ref{warped} below will show  that $g:N\to M$ is free of focal points and $f:\Si\to M$ is totally geodesic. If further $N'$ is compact with finite fundamental group, then Theorem \ref{tgmth} applies. Now take $N=N'=\real P^n$   and $\Si=\m S\times S^n$ with $n\geq 2$. Let $P:S^n\to \real P^n$ be the standard covering. Define the covering maps $f:\Si\to \m S\times N'$ given by $f(x,y)=(x,P(y))$ and $g:N\to \{0\}\times N'$ given by $g(z)= (0,z)$. The immersion $f:\Si\to M$ is not an embedding. The facts $\pi_1(\Si)=\{0\}$ and $\pi_1(M)=\pi_1(\real P^n)=\mathbb Z_2$ imply that $f_*^1$ is not surjective (see Remark \ref{dimensions}-\ref{remarknm3}). \end{example}

Let $S$ be an embedded submanifold of a Riemannian manifold $M$. Let $\m N_{S}$ be the normal bundle of $S$. We call an open subset $W\subset M$ an {\it $\epsilon$-tubular neighborhood} of $S$ if $W=\exp^\perp(\widetilde W)$, where $\widetilde W=\{(x,v)\in \m N_{S}\bigm||v|<\epsilon\}$ and the restriction $\exp^\perp|_{\widetilde W}$ is a diffeomorphism. Similarly we could define a {\it closed $\epsilon$-tubular neighborhood}. More generally we could define:

\begin{definition} \label{tubular} Let $V$ be a subset of $M$ that contains $S$. We say that $V$ is a {\it tubular neighborhood} of $S$ if $V=\exp^\perp(\widetilde V)$,    where $\exp^\perp|_{\widetilde V}$ is a diffeomorphism and $\widetilde V$ is a neighborhood (possibly with boundary)  of $\m N_{S}$ with the following property: if $(p,v)\in \widetilde V$ then $(p,tv)\in \widetilde V$ for all $t\in[0,1]$. 
\end{definition}

Comparing with Theorem \ref{tgmth}, the next result consider the assumption that $M$ is simply-connected instead the assumption that $\Si$ has finite fundamental group. By Remark \ref{bolton}, the simply-connectedness of $M$
implies that $N$ is simply-connected and $g$ is
 an embedding.
 
\begin{theorem} \label{tgmth2} Let $f:\Si^{m-1}\to M^m$ be a totally geodesic immersion of a closed manifold $\Si$ in a complete simply-connected Riemannian manifold $M$ and $g:N^n\to M^m$ a complete isometric embedding without focal points. Then $N$ is compact and the following conclusions hold:
\begin{enumerate}[(1)]
\item\label{m-n1}  if $m-n=1$ then $\Si$ is diffeomorphic to $N$, and $f$ is an embedding and a homotopy equivalence;
\item if $m-n=2$ then $\pi_1(\Si)$ is cyclic;
\item\label{m-n2} if $m-n\ge 2$ then $f(\Si)\cap g(N)=\emptyset$ and there exists a covering map $F:\Si\to S_\ep\cong\m N_g^1$ such that $f$ is smoothly homotopic the $j\circ F$, where $j:S_\ep\to M$ is the inclusion map; if further $f$ is an embedding then the following statements hold:
\begin{enumerate}[(a)]
\item $\bar F=F\circ f^{-1}:f(\Si)\to S_\ep$ is a diffeomorphism via an ambient isotopy, hence $f$ is $(m-n-1)$-connected;
\item $M-f(\Si)=A\sqcup B$ where the closure $\bar A$ is a compact tubular neighborhood of $g(N)$ with smooth boundary $f(\Si)$, and $\bar B$ is an unbounded smooth manifold with boundary $f(\Si)$.
\end{enumerate}
\item\label{m-ngeq3} if $m-n\geq 3$ then $f$ is an embedding and $\Si$ is simply-connected.
\end{enumerate}
\end{theorem}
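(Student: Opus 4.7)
The simply-connectedness of $M$ is a strong assumption: Theorem B and Remark \ref{bolton} yield immediately that $\exp^\perp:\m N_g\to M$ is a diffeomorphism, $g$ is an embedding with $N$ simply connected, and $S_\ep:=S_\ep(g(N))\cong \m N_g^1$ for every small $\ep>0$. Under this identification, $M$ is a disk bundle over $N$ with $g(N)$ as its zero section and the normal geodesics of $g$ as its straight radial rays. The plan is to adapt the core geometric argument from Theorem \ref{tgmth}: the hypothesis $|\pi_1(\Si)|<\infty$ was used there only to pass to a compact cover $\ti\Si$ inside the universal cover $\ti M$, whereas here $\ti M=M$ already, so the same analysis applies directly to $f:\Si\to M$ with no restriction on $\pi_1(\Si)$.

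\textbf{Geometric core.} The pivotal observation is that $f(\Si)$ cannot contain any normal geodesic to $g(N)$ for positive time. Indeed, such a normal geodesic, lifted through $(\exp^\perp)^{-1}$, is a straight half-line in a fiber of $\m N_g$, hence proper in $M$; if it lay in $f(\Si)$ it would be the $f$-image of a complete geodesic of $\Si$ (by total geodesy of $f$), but every such geodesic is contained in the compact $f(\Si)$, contradicting properness. From this I deduce two things simultaneously when $m-n\ge 2$: first, $f(\Si)\cap g(N)=\emptyset$ (any intersection point would, by a dimension count, provide a normal direction tangent to $f(\Si)$ and hence a normal geodesic inside $f(\Si)$); second, $f(\Si)$ is transverse to the radial foliation of $M-g(N)$, since a radial tangency would again produce a normal geodesic inside $f(\Si)$. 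The radial retraction $M-g(N)\to S_\ep$ then composes with $f$ to give a smooth local diffeomorphism $F:\Si\to S_\ep$, and the straight-line homotopy in $\m N_g$ pushes forward to a homotopy $f\simeq j\circ F$ in $M$. Compactness of $\Si$ turns $F$ into a covering onto the connected $S_\ep$, which forces $S_\ep$ (hence $N$) compact. For $m-n=1$ the same tangency argument applied to the bundle projection $\pi:M\cong\m N_g\to N$ produces a covering $F:\Si\to N$.

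\textbf{Case analysis.} With $F$ in hand, the assertions are topological.
\begin{enumerate}[(1)]
\item $m-n=1$: $N$ is simply connected of dimension $m-1$, so $F:\Si\to N$ is a diffeomorphism; the lift $(\exp^\perp)^{-1}\circ f:\Si\to \m N_g$ is then the graph of a smooth function over $N$, making $f$ an embedding. The ambient isotopy sliding this graph onto the zero section realises $f\simeq g\circ F$, so $f$ is a homotopy equivalence because $g$ is.
\item $m-n=2$: $\m N_g^1$ is an $S^1$-bundle over simply connected $N$, so $\pi_1(S_\ep)$ is a quotient of $\Bbb Z$, hence cyclic; injectivity of $F_*$ transfers cyclicity to $\pi_1(\Si)$.
\item $m-n\ge 3$: both base and fibers of $\m N_g^1$ are simply connected, so $S_\ep$ is simply connected, $F$ is a diffeomorphism, and therefore $\Si$ is simply connected; absorbing the radial homotopy into an ambient isotopy of $M$ shows $f$ is an embedding.
\item Embeddedness consequences in (3) of the statement: when $f$ is embedded, the ambient radial isotopy makes $\bar F=F\circ f^{-1}$ a diffeomorphism, and the closed embedded hypersurface $f(\Si)$ separates the simply connected $M$ into the compact tube $\bar A=\overline{\exp^\perp\{\,|v|<\ep\,\}}$ around $g(N)$ and the unbounded complementary region $\bar B$.
\end{enumerate}

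\textbf{Principal obstacle.} The heart of the argument is the observation above that no normal geodesic of $g(N)$ can be trapped in the compact totally geodesic $f(\Si)$; once this is in place, the dichotomy \emph{intersection vs.\ radial tangency} converts into the clean structural statements of the theorem. The most delicate point is the case $m-n=1$, where intersection with $g(N)$ is not ruled out and the argument must run on the bundle projection rather than on the radial retraction; verifying that the tangency obstruction still operates when $\pi_1(\Si)$ is infinite is the technical residue and parallels the geometric core of Theorem \ref{tgmth}.
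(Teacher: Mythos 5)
Your proposal is correct and follows essentially the same route as the paper: the pivotal tangency/orthogonality obstruction (that a geodesic tangent to the compact totally geodesic $f(\Si)$ cannot also start orthogonally to $g(N)$, because its $(\exp^\perp)^{-1}$-lift is an unbounded ray) is the paper's Lemma \ref{orthogon}, the radial projection producing a local diffeomorphism $F:\Si\to S_\ep$ (or onto $N$ when $m-n=1$) which compactness upgrades to a covering is Lemma \ref{FG} plus Theorem \ref{completeness}, and the $\pi_1$ conclusions come from the same $S^{m-n-1}$-bundle exact sequence used in Proposition \ref{12linha}. One small slip worth noting: in your item on the embedded case the region $\bar A$ is the $g(N)$-side component of $M-f(\Si)$, not literally $\overline{\exp^\perp\{\,|v|<\ep\,\}}$ (whose boundary is $S_\ep$, not $f(\Si)$); $\bar A$ is compact because the radial ambient isotopy carries it diffeomorphically onto that closed $\ep$-tube.
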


By considering the exact sequence of the fiber bundle $S^{m-n-1}\to S_\ep\to g(N)$ it is easy to obtain from 
Theorem \ref{tgmth2} the following
\begin{corollary} Under the hypotheses of Theorem \ref{tgmth2} if $m-n=2$ then $f_*^i:\pi_i(\Si)\to \pi_i(M)$ is a monomorphism  for $i=2$ and an isomorphism for $i\ge 3$.
\end{corollary}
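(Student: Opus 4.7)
The plan is to combine the factorization supplied by Theorem \ref{tgmth2}(3) with the homotopy long exact sequence of the circle bundle $S^1\to S_\ep\to g(N)$. First, Theorem \ref{tgmth2}(3) yields a covering map $F:\Si\to S_\ep$ such that $f$ is smoothly homotopic to $j\circ F$, where $j:S_\ep\to M$ is the inclusion. Since $F$ is a covering, $F_*^i:\pi_i(\Si)\to\pi_i(S_\ep)$ is an isomorphism for every $i\ge 2$, so the whole question reduces to the behavior of $j_*^i$ on $\pi_i$ for $i\ge 2$.

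Next, I would identify $j_*^i$ with the sphere-bundle projection $p:S_\ep\to N$. Since $M$ is simply-connected, Remark \ref{bolton} gives that $\exp^\perp:\m N_g\to M$ is a diffeomorphism, and under this identification $j$ is just the inclusion $S_\ep\hookrightarrow \m N_g$. The fiberwise straight-line homotopy $(x,v)\mapsto (x,(1-t)v)$ then shows that this inclusion is homotopic to $j_0\circ p$, where $j_0:N\to \m N_g$ is the zero section. Composing with $\exp^\perp$ and using $g=\exp^\perp\circ j_0$, we obtain $j\simeq g\circ p$ in $M$, hence $j_*^i=g_*^i\circ p_*^i$. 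By Corollary \ref{bolton2}, the map $g_*^i:\pi_i(N)\to\pi_i(M)$ is an isomorphism for $i\ge 2$, so the behavior of $f_*^i$ coincides with that of $p_*^i$.

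Finally, I would invoke the homotopy long exact sequence of the $S^1$-bundle $S^1\to S_\ep\xrightarrow{p} N$ (available because $m-n-1=1$):
\begin{equation*}
\cdots\to\pi_i(S^1)\to\pi_i(S_\ep)\xrightarrow{p_*}\pi_i(N)\to\pi_{i-1}(S^1)\to\cdots.
\end{equation*}
As $\pi_j(S^1)=0$ for $j\ge 2$, this immediately yields that $p_*^i$ is an isomorphism for $i\ge 3$ and a monomorphism for $i=2$. Combining the three steps, $f_*^i=g_*^i\circ p_*^i\circ F_*^i$ is an isomorphism for $i\ge 3$ and a monomorphism for $i=2$, as claimed. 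The argument is essentially mechanical once Theorem \ref{tgmth2} is at hand; the only point requiring any genuine verification is the homotopy $j\simeq g\circ p$, which however is immediate from the vector-bundle structure of $\m N_g$ together with the diffeomorphism $\exp^\perp:\m N_g\to M$.
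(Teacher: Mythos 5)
Your proof is correct and follows essentially the same route the paper intends: factor $f$ up to homotopy through the sphere bundle $S_\ep$, use that $F:\Si\to S_\ep$ is a covering (so an isomorphism on $\pi_i$ for $i\ge 2$), identify the remaining inclusion with the bundle projection composed with the homotopy equivalence $g$, and read off the answer from the long exact sequence of the $S^1$-bundle. The only cosmetic difference is that you write the factorization as $g_*\circ p_*\circ F_*$ whereas the paper writes it as $\iota_*\circ(\rho|_{S_\ep})_*\circ F_*$ (with $\iota:g(N)\to M$ the inclusion and $\rho$ the retraction coming from $\exp^\perp$ being a diffeomorphism), but these agree once $N$ is identified with $g(N)$ via the embedding $g$.
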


\begin{remark} Theorem \ref{tgmth2} shows that if $m-n\neq 2$ then $f$ is an embedding and $\Si$ is simply-connected. However, in the case $m-n=2$ both conditions may fail. In fact, if
we consider in Example \ref{examplebundle} the special case that  $\Si=SU_2\cong S^3$ and $f:\Si\to \m S_1=T_1 S^2$ is the universal
 covering, we see that  the map $f:\Si\to M=TS^2$ is not an embedding. Still in Example \ref{examplebundle}, the case that $\Si=\m S_1=T_1 S^2$ and $f$ is the identity map we see that $\Si$ is not simply-connected, showing that the simply-connectedness of $\Si$ may not occur in codimension two.
\end{remark}

For the next result we need the following definition.

\begin{definition} \label{graph} Let $S$ be a submanifold of a Riemannian manifold $M$ and 
consider $X\subset M$ with the induced topology. We say that $X$ is a normal graph over $S$ if there exists a homeomorphism $h:S\to X$ such that for any point $x\in S$ there exists a unique unit speed geodesic which starts at $x$ orthogonally to $S$ and ends at $h(x)$. \end{definition}

Comparing with Theorem \ref{tgmth2}, the next result replaces the assumption that $\Si$ is compact by the condition that $\Si$ is
properly embedded in $M$ with $g(N)\cap \Si=\emptyset$. No codimension
condition on $N$ is needed.
\begin{theorem}\label{tgth2} Let $\Si$ be a properly embedded totally geodesic hypersurface in a complete simply-connected manifold $M$. Let $g:N\to M$ be a complete isometric embedding without focal points with $g(N)\cap \Si=\emptyset$. Then we have:
\begin{enumerate}
\item for any $\ep>0$, the hypersurface $\Si$ is a normal graph over an open subset of the boundary of the closed $\epsilon$-tubular neighborhood of $g(N)$;
\item $M-\Si=A\sqcup B$, where the closure $\bar A$ is a (possibly unbounded) tubular neighborhood of $g(N)$ with smooth boundary $\Si$ and $\bar B$ is an unbounded smooth manifold with boundary $\Si$;
\item for each point $x\in \bar B$, the unique unit speed geodesic which starts orthogonally at $g(N)$ and ends at $x$ intersects $\Si$ transversely at a unique point.
\end{enumerate}
\end{theorem}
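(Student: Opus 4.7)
The plan is to identify $M$ with the total space of $\m N_g$ via $\exp^\perp$ and analyze the lift of $\Sigma$ there. By Theorem B and Remark \ref{bolton}, the simple-connectivity of $M$ together with the absence of focal points for $g$ makes $\exp^\perp\colon\m N_g\to M$ a diffeomorphism, so each $x\in M$ is reached from $g(N)$ by a unique orthogonal geodesic. Set $\wi\Sigma=(\exp^\perp)^{-1}(\Sigma)\subset\m N_g$: this is a connected, properly embedded smooth hypersurface, disjoint from the zero section. A preliminary observation is that every radial ray $t\mapsto(p,tu)$, $(p,u)\in\m N_g^1$, meets $\wi\Sigma$ transversely wherever it meets it at all; otherwise tangency at some $(p,t_0u)$ would give $\gamma_{p,u}'(t_0)\in T\Sigma$, and total geodesicity of $\Sigma$ would trap $\gamma_{p,u}$ in $\Sigma$, contradicting $\gamma_{p,u}(0)=g(p)\notin\Sigma$. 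Set $\tau(p,u)=\inf\{t>0:\exp_p(tu)\in\Sigma\}\in(0,\infty]$; properness of $\Sigma$ makes $\tau$ lower semicontinuous, and transversality plus the implicit function theorem presents $\wi\Sigma$ locally as a graph $|v|=f(p,u)$ over $\m N_g^1$ and $\tau$ as a smooth function on $\{\tau<\infty\}$.

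The crux is the claim that every radial ray meets $\wi\Sigma$ in at most one point, equivalently that every $(p,v)\in\wi\Sigma$ satisfies $|v|=\tau(p,v/|v|)$. The plan is to show the set $S\subset\wi\Sigma$ of such ``first-intersection'' points is nonempty, open, and closed. Nonemptiness: any $(p,\tau(p,u)u)$ with $\tau(p,u)<\infty$ lies in $S$, and such $(p,u)$ exists since $\wi\Sigma\ne\emptyset$. For openness, near $(p_0,v_0)\in S$ one has the local graph $|v|=f(p,u)$ with $f(p_0,u_0)=|v_0|$; were nearby $(p_k,u_k)\to(p_0,u_0)$ to satisfy $\tau(p_k,u_k)<f(p_k,u_k)$, then $\tau(p_k,u_k)\to|v_0|$ (by lower semicontinuity of $\tau$ together with $f(p_k,u_k)\to|v_0|$), so the earlier-intersection points $(p_k,\tau(p_k,u_k)u_k)$ would converge to $(p_0,v_0)$, eventually land on the local graph, and force $\tau(p_k,u_k)=f(p_k,u_k)$, a contradiction. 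Closedness is analogous: if $(p_k,v_k)\in S$ converges to $(p_\infty,v_\infty)\in\wi\Sigma\setminus S$, the first intersection on the limit ray would occur at some $T<|v_\infty|$, and its transverse perturbation to the nearby rays would give $\tau(p_k,u_k)\le T+\varepsilon<|v_\infty|$ for large $k$, contradicting $\tau(p_k,u_k)=|v_k|\to|v_\infty|$. Connectedness of $\wi\Sigma$ then gives $S=\wi\Sigma$. This step is the main obstacle of the argument; all three hypotheses---total geodesicity of $\Sigma$, absence of focal points for $g$, simple connectivity of $M$---come together here.

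Granted the key claim, the rest is bookkeeping. Set $\wi A=\{(p,v)\in\m N_g:|v|<\tau(p,v/|v|)\}$ (including the zero section) and $\wi B=\{(p,v)\in\m N_g:|v|>\tau(p,v/|v|)\}$, and put $A=\exp^\perp(\wi A)$, $B=\exp^\perp(\wi B)$. Then $\wi A$ is open (by lower semicontinuity of $\tau$) and star-shaped along rays about the zero section, so $\bar A$ is a tubular neighborhood of $g(N)$ in the sense of Definition \ref{tubular} with smooth boundary $\Sigma$; each half-ray beyond $\tau(p,u)$ lies in $\wi B$, so $\bar B$ is unbounded with boundary $\Sigma$. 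This proves (2). For (3), any $x\in\bar B$ has $(\exp^\perp)^{-1}(x)=(p,v)$ with $|v|\ge\tau(p,v/|v|)$, and the key claim makes $\exp_p(\tau(p,v/|v|)\,v/|v|)$ the unique transverse intersection of the geodesic from $g(p)$ to $x$ with $\Sigma$. Finally, for (1), given $\epsilon>0$ set $U=\{s=\exp_p(\epsilon u)\in S_\epsilon:\tau(p,u)<\infty\}$, an open subset of the $\epsilon$-sphere bundle, and define $h(s)=\exp_p(\tau(p,u)u)$; the segment of the radial geodesic from $s$ to $h(s)$ is orthogonal to $S_\epsilon$, and by the key claim $h\colon U\to\Sigma$ is a homeomorphism, exhibiting $\Sigma$ as a normal graph over $U$.
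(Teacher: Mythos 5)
Your proposal is correct and follows essentially the same approach as the paper. The paper factors the result through the more general Theorem \ref{gth} (which drops completeness of $M$ and $N$, assuming only that $\exp^\perp$ is a diffeomorphism and that geodesics tangent to $\Si$ never meet $g(N)$ orthogonally), but the heart of that proof is identical to yours: transversality of the radial geodesics with $\Si$ (which you deduce from total geodesicity plus $\Si\cap g(N)=\emptyset$, matching Lemma \ref{orthogon}/\ref{intersect}), followed by a connectedness argument showing that the set of \emph{first-intersection} points is nonempty, open, and closed in $\Si$ (the paper's set $\mathcal C$, your set $S\subset\wi\Si$), and then the decomposition into $A$ and $B$ read off from whether a point lies before or after the unique crossing. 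The only cosmetic difference is that you transport $\Si$ to $\wi\Si=(\exp^\perp)^{-1}(\Si)\subset\m N_g$ and work with the function $\tau$ on $\m N_g^1$, whereas the paper works directly in $M$ with the map $F(p)=\g_p(\ep)$ and the smooth distance-type function $d_x$; these are equivalent descriptions of the same argument.
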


Theorem B shows that the hypothesis that $\exp^\perp:\m N_g\to M$ is a covering map is weaker than the assumption that $M$ and $N$ are complete with $g$ free of focal points. The next result presents a situation where these two assumptions are equivalent.
\begin{theorem} \label{covering}  Let $f:\Si^{m-1}\to M^m$ be a totally geodesic immersion of a closed manifold $\Si$ in a Riemannian manifold $M$. Let $g:N^n\to M^m$ be an immersion. Assume that one of the following conditions holds:
\begin{enumerate}[(i)]
\item\label{diff} $\exp^\perp:\m N_g\to M$ is a diffeomorphism;
\item\label{finite} $\exp^\perp:\m N_g\to M$ is a covering map and $\Si$ has finite fundamental group.
\end{enumerate} Then $M$ is complete and  $N$ is compact, hence Theorem \ref{tgmth} applies if Item \ref{finite} is satisfied. If Item \ref{diff} holds we have that:
\begin{enumerate}[(1)]
\item \label{cover} if $m-n=1$ then $\rho\circ f:\Si\to g(N)$ is a covering map, where $\rho:M\to g(N)$ is the natural strong 
deformation retraction  given by $\rho(\exp^\perp(x,v))=g(x)$;
\item if $m-n\geq 2$ then the same conclusions in Item \ref{m-n2} of Theorem \ref{tgmth2} hold.
\end{enumerate}
\end{theorem}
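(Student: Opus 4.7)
The plan is to pull back the metric of $M$ along $\exp^\perp$: under either hypothesis $\exp^\perp$ is a local diffeomorphism, so $g$ has no focal points, and the pullback metric $\tilde h$ on $\m N_g$ makes $\exp^\perp$ a local isometry. Each radial line $t\mapsto(x,tv)$ is then a geodesic of constant speed $|v|$, defined for all $t\in\real$. To produce a compact lift of $f$, set $\ti f=(\exp^\perp)^{-1}\circ f:\Si\to\m N_g$ under (\ref{diff}), or under (\ref{finite}) pass to the universal cover $p:\ti\Si\to\Si$ -- compact because $\pi_1(\Si)$ is finite -- and lift $f\circ p$ to $\ti f:\ti\Si\to\m N_g$ using simple connectedness of $\ti\Si$. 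In either case $\ti f$ is a totally geodesic immersion of a compact manifold into $(\m N_g,\tilde h)$.

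I claim $\pi\circ\ti f:\ti\Si\to N$ is a submersion. Suppose otherwise: at some $\ti p$ the image $W=d\ti f(T_{\ti p}\ti\Si)$ (dimension $m-1$) and the vertical subspace $V=\ker d\pi_{\ti f(\ti p)}$ (dimension $m-n$) fail to span $T_{\ti f(\ti p)}\m N_g$, which by dimension count forces $V\subset W$. Any nonzero $w\in V$ then generates a radial geodesic in $(\m N_g,\tilde h)$ which by total geodesicity lies inside $\ti f(\ti\Si)$; but this geodesic is an embedded copy of $\real$ and hence noncompact, contradicting compactness of $\ti f(\ti\Si)$. Therefore $\pi\circ\ti f$ is an open map whose image is both compact and open in the connected manifold $N$, so $N$ is compact. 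Completeness of $M$ now follows: on $(\m N_g,\tilde h)$ the function $\ti r(x,v)=|v|$ satisfies $|\nabla\ti r|_{\tilde h}=1$ off the zero section by Gauss's lemma (applied to each normal geodesic from $g(N)$ and pulled back), so $\ti r$ is $1$-Lipschitz; combining with the radial upper bound $d_{\tilde h}((x,v),N_0)\le|v|$ gives equality. Hence the closed $\tilde h$-ball of radius $R$ about $N_0$ equals the closed disk bundle $\{|v|\le R\}$, which is compact since $N$ is compact. Hopf-Rinow then yields completeness of $(\m N_g,\tilde h)$, and since $\exp^\perp$ is a Riemannian covering $M$ is complete as well. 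Under hypothesis (\ref{finite}) all the assumptions of Theorem \ref{tgmth} are now in force, so that theorem applies directly.

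For the sharper conclusions under (\ref{diff}) I continue working in $\m N_g$. When $m-n\ge 2$, at any hypothetical point of $f(\Si)\cap g(N)$ the intersection $df_q(T_q\Si)\cap\m N_{g,x}$ has dimension at least $m-n-1\ge 1$, producing a nonzero normal vector tangent to $f(\Si)$ whose noncompact normal geodesic (noncompact because $\exp^\perp$ is a diffeomorphism) would lie inside compact $f(\Si)$ by total geodesicity; hence $f(\Si)\cap g(N)=\emptyset$. For conclusion (\ref{cover}) with $m-n=1$ the map $\rho\circ f=g\circ\pi\circ\ti f:\Si\to g(N)$ is a local diffeomorphism between compact manifolds of equal dimension (submersion plus the dimension match), hence a covering. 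For (2) with $m-n\ge 2$, radial normalization yields $F:\Si\to\m N_g^1\cong S_\ep$; the same argument (the radial direction must lie outside $W$ else a noncompact radial line sits in $\ti f(\Si)$) shows $F$ is a local diffeomorphism between compact $(m-1)$-manifolds, and hence a covering. The straight-line homotopy $\ti H(p,t)=(x(p),\,((1-t)|v(p)|+t\ep)\nu(p))$ in $\m N_g$ composed with $\exp^\perp$ gives the smooth homotopy $f\simeq j\circ F$, and when $f$ is an embedding this family consists of embeddings for $t<1$ and extends (by isotopy extension on $M$) to an ambient isotopy that produces the diffeomorphism $\bar F$ and the decomposition $M-f(\Si)=A\sqcup B$ of Item (\ref{m-n2}) of Theorem \ref{tgmth2}. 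The main subtlety is keeping consistent track of the three ingredients that drive everything -- the radial foliation of $(\m N_g,\tilde h)$ by infinite geodesic lines, the compactness of the lifted image, and total geodesicity -- and handling (\ref{finite}) by passing first to the finite universal cover of $\Si$ rather than attempting a direct section of $\exp^\perp$.
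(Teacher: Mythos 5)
Your strategy of pulling back the metric to $\m N_g$ and working there is a clean reframing of the paper's argument, and several of your steps are correct: the completeness of $M$ via Gauss's lemma, the $1$-Lipschitz function $\ti r$, compactness of closed tubes, and Hopf--Rinow is a genuine alternative to the paper's appeal to \cite{mz} (Corollary \ref{mendzhou}). The submersion/compactness argument is essentially correct, though the sentence ``any nonzero $w\in V$ then generates a radial geodesic'' is misstated at a point $(x,v)$ with $v\neq 0$: a vertical vector $w$ not parallel to $v$ does \emph{not} give a geodesic of the pullback metric, since $t\mapsto\exp_{g(x)}(v+tw)$ is not a geodesic of $M$. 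The fix is immediate --- take $w$ to be the radial direction $R(x,v)$, which lies in $V$; if $V\subset W$ then $R(x,v)\in W$ and the radial line lies in $\ti f(\ti\Si)$, giving the same contradiction. The transversality to the radial flow, the covering map $F$, and the explicit homotopy $f\simeq j\circ F$ all check out and match the paper's Lemma \ref{FG}.

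The genuine gap is in the final claim: that when $f$ is an embedding, the family $H(\cdot,t)=\exp^\perp\circ\ti H(\cdot,t)$ ``extends (by isotopy extension on $M$) to an ambient isotopy'' yielding the diffeomorphism $\bar F$ and the decomposition $M-f(\Si)=A\sqcup B$. Your own computation shows $\ti H(\cdot,t)$ is injective for $t<1$, but $\ti H(\cdot,1)=F$ is injective if and only if $F$ \emph{is} injective --- which is precisely what must be proved, not assumed. The set of $t\in[0,1]$ for which a smooth family of compact immersions is an embedding is open but in general not closed, so isotopy extension on $[0,1)$ says nothing about $t=1$; if $F$ were a $k$-sheeted cover with $k\ge 2$, the family would still consist of embeddings for all $t<1$ and the ambient isotopy would simply fail to converge as $t\to1$. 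The paper's proof of injectivity is the real content here: in Theorem \ref{gth}, the set $\mathcal C=\{p\in\Si: \#(\g_p([0,d_p])\cap\Si)=1\}$ is shown to be nonempty, open, and closed in $\Si$, using properness of the embedding and the transversality of the radial flow to $f(\Si)$, whence $\mathcal C=\Si$ and $F$ is injective (Claim \ref{Fdiffeo}); the splitting $M-f(\Si)=A\sqcup B$ is then built from the set $A=\{x\in M:\g_x([0,d_x])\cap\Si=\emptyset\}$ (Claim \ref{claimboundary} and the boundary-chart argument). Your proposal needs this openness/closedness argument (or an equivalent one) before isotopy extension can be invoked on the whole interval $[0,1]$.
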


The following example illustrates a situation where Item \ref{cover} in Theorem \ref{covering} applies.
\begin{example} We consider the complete flat Moebius strip \begin{center} $M=\left([-1,1]\times \real\right)/{\sim}\,,$ \ where $(-1,t)\sim (1,-t)$ for all $t\in\real$.\end{center} Denote by
$\bar\al\in M$ the class of $\al$. Take
$$\Si=\left\{\overline{(x,t)}\in M \bigm| x\in [-1,1],\ t= 1 \mbox{ or } t=-1\right\}$$
and $N=\left\{\overline{(x,0)}\in M \bigm| x\in [-1,1]\right\}$ and let $f:\Si\to M$ and
$g:N\to M$ be the inclusion maps.
It is easy to see that $f$ is totally geodesic and that $\exp^\perp:\m N_g \to M$ is a diffeomorphism. \end{example}

Theorem \ref{covering} suggests that Theorems \ref{tgmth}, \ref{tgmth2} and \ref{tgth2}
could be rewritten in technical more general versions (see Theorems \ref{gth}, \ref{12linha}  and \ref{completeness2}, where
we also generalize the totally geodesic condition).

The rest of this paper is organized as follows. In section 2 we prove Theorem \ref{tgth2} and in section 3 we prove Theorems \ref{tgmth}, \ref{tgmth2} and \ref{covering}.  In section 4 we present proofs for facts present in some examples in the introduction.

\begin{remark} Since this paper uses Proposition 4.1 in \cite{mz}, the first author would like to use this occasion to inform that it cames to his knowledge that Theorem A in \cite{mz} is implied by a more general result present in the doctor thesis of Flor\^encio F. Guimar\~aes.
\end{remark}

{\bf Aknowledgement.} The authors would like to thank Professor Detang Zhou for useful discussions during 
the preparation of this paper. They also thank the referee for very interesting suggestions.

\section{\bf Proof of Theorem \ref{tgth2}}\label{prooftgth2}

Theorem \ref{tgth2} follows from Theorem B and the next technical general theorem  (compare with Theorem 1.2 in \cite{mm}):

\begin{theorem}\label{gth} Let $\Si$ be a properly embedded hypersurface in a Riemannian manifold $M$. Let $g:N\to M$ be an embedding such that $\exp^\perp: \m N_g\to M$ is a diffeomorphism. Assume that $g(N)\cap \Si=\emptyset$ and that the unit speed geodesics tangent to $\Si$ do not intersect $g(N)$ orthogonally. Then we have:
\begin{enumerate}
    \item\label{it1gth} given $\ep>0$, the hypersurface $\Si$ is a normal graph over an open subset $\Omega$ of the boundary $S_\ep$ of the closed $\epsilon$-tubular neighborhood of $g(N)$;
        \item \label{it2gth} $M-\Si=A\sqcup B$, where $\bar A$ is a (possibly unbounded) tubular neighborhood of $g(N)$ with smooth boundary $\Si$ and $\bar B$ is an unbounded smooth manifold with boundary $\Si$ and contained in $\rho^{-1}(\rho(\Si))$, where 
        $\rho:M\to g(N)$ is the natural projection given by $\rho(\exp^\perp (q,v))=g(q)$;
    \item\label{it3gth} for each point $x\in \bar B$ the unique unit speed geodesic $\g_x:[0,+\infty)\to M$ which starts orthogonally at $g(N)$ and passes through $x$ intersects $\Si$ transversely at a unique point;
    \item\label{isotopy} the map $F:\Si\to S_\ep$ given by $F(x)=\g_{x}(\ep)$ is a diffeomorphism onto $\Omega$ via a natural isotopy on $\rho^{-1}(\rho(\Si))$.    
\end{enumerate}
\end{theorem}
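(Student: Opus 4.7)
The plan is to parametrize $M-g(N)$ by $\m N_g^1\times(0,\infty)$ through $\Phi((q,v),t)=\exp^\perp(q,tv)$, which is a diffeomorphism by hypothesis and under which the vertical direction $\p_t$ corresponds to the outward normal-geodesic field $\na r$ (where $r=|v|$). Setting $\wS=\Phi^{-1}(\Si)$, the whole theorem reduces to showing that $\wS$ is the graph of a smooth function $\tau$ over an open subset $\Omega'\subset\m N_g^1$; once this is known, items (\ref{it1gth})--(\ref{isotopy}) all fall out of the explicit formulas for $F$, for the natural isotopy, and for the decomposition.

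First I would verify the transversality: any intersection of a normal geodesic $\g_{(q,v)}$ with $\Si$ is transverse. A tangential intersection would force $\g_{(q,v)}$ to be a unit-speed geodesic tangent to $\Si$ that also meets $g(N)=\g_{(q,v)}(0)$ orthogonally, violating the hypothesis. Hence $\wS$ is everywhere transverse to $\p_t$ and, by the implicit function theorem, the fiber projection $\pi:\wS\to\m N_g^1$ is a local diffeomorphism onto its image $\Omega'$, which is open in $\m N_g^1$.

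The main obstacle is the global injectivity of $\pi$. For $(q,v)\in\Omega'$ let $\tau(q,v)$ be the first hit time of $\g_{(q,v)}$ with $\Si$, attained because $\Si$ is closed, disjoint from $g(N)$, and transverse hits are locally isolated; the IFT together with the definition of the infimum give that $\tau$ is continuous on $\Omega'$. Consider the first-hit locus
\[
\Si_1=\{\g_{(q,v)}(\tau(q,v)):(q,v)\in\Omega'\}\subset\Si.
\]
The key assertion is that $\Si_1$ is both open and closed in $\Si$. Openness is the IFT, since near a first-hit point the transverse intersection persists and remains the first intersection. Closedness uses continuity of $\tau$: if $x_n=\g_{(q_n,v_n)}(\tau(q_n,v_n))\to x\in\Si$, then $(q_n,v_n)\to(q,v)\in\Omega'$ and $r(x)=\lim\tau(q_n,v_n)=\tau(q,v)$, so $x\in\Si_1$. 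Since $\Si$ is nonempty and every point of $M$ lies on a normal ray, $\Si_1\neq\emptyset$, and the standing connectedness of $\Si$ forces $\Si_1=\Si$. A hypothetical second hit $x_2$ of some ray $\g_{(q,v)}$ would then lie in $\Si_1$ and hence be the first hit of its own unique normal ray (which is $\g_{(q,v)}$ itself), contradicting the strict inequality of the two hit times. I expect this clopen-in-$\Si$ argument to be the delicate point, as it requires carefully combining transversality, the closedness of $\Si$, and continuity of $\tau$ on the a priori only open subset $\Omega'$.

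With uniqueness established, $\wS$ is the graph of $\tau$ over $\Omega'$ and the remaining items are bookkeeping. The map $F(x)=\g_x(\ep)$ equals $\Phi(\pi(\Phi^{-1}(x)),\ep)$, hence is a diffeomorphism of $\Si$ onto $\Omega=\Phi(\Omega'\times\{\ep\})\subset S_\ep$, and $H_s(x)=\g_x((1-s)r(x)+s\ep)$ is a smooth isotopy of embeddings from the inclusion to $F$, entirely inside $\rho^{-1}(\rho(\Si))$; these give items (\ref{it1gth}) and (\ref{isotopy}). For item (\ref{it2gth}), take $A$ to be the image under $\Phi$ of the region below the graph of $\tau$, together with $g(N)$ and the rays over $\m N_g^1\setminus\Omega'$, and $B$ the image of the region strictly above the graph; then $\bar A$ is a tubular neighborhood of $g(N)$ with smooth boundary $\Si$ (the corresponding subset of $\m N_g$ is star-shaped with smooth boundary), while $\bar B$ is an unbounded smooth manifold with boundary $\Si$ contained in $\rho^{-1}(\rho(\Si))$ by construction. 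Item (\ref{it3gth}) is then immediate from uniqueness: the single normal ray through $x\in\bar B$ meets $\Si$ exactly once, transversely.
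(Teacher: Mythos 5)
Your proposal is correct and follows essentially the same route as the paper. The paper's key device is the set $\mathcal C=\{p\in\Si : \#(\g_p([0,d_p])\cap\Si)=1\}$, which is shown to be nonempty, and to have open and closed complement in $\Si$, whence $\mathcal C=\Si$ by connectedness; your $\Si_1$ (first-hit locus) is exactly this set, and your continuity-of-$\tau$ lemma packages what the paper does by hand in its sequential proof that $\Si-\mathcal C$ is closed (properness of $\Si$ gives the lower semicontinuity, transversality/IFT the upper semicontinuity). The construction of $A$ (the "sub-graph" region together with $g(N)$ and the rays over $\m N_g^1\setminus\Omega'$), of $B$, and the straight-line radial isotopy to $S_\ep$ all match the paper's $A=\{x : \g_x([0,d_x])\cap\Si=\emptyset\}$ and its flow of $X=d_x\g_x'(d_x)$. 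One point worth tightening if you were to write this up: the assertion that the persisting transverse hit "remains the first intersection" is not a bare consequence of the IFT; it needs exactly the continuity of $\tau$ that you established in the preceding sentence, together with the uniqueness of the intersection in a single graph chart, so the clopen argument is really "continuity of $\tau$ used twice" rather than "IFT once, continuity once."
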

It is a little surprising that the hypotheses of completeness of $M$ and $N$ are not needed in Theorem \ref{gth}.
What compensates this weakness is the fact that for each point $x$ in $M-g(N)$ there exist a neighborhood $W$ of $x$ with the
property that $\g_y((0,+\infty))\subset W$, for all $y\in W$.
Thus we can use local arguments to explore the fact that $\Si$ is properly embedded.

The two simple examples bellow illustrate this theorem.
\begin{example} Consider the surface $\Si\subset \real^3$ of equation $z=\frac{1}{x^2+y^2}$. Notice that $\Si$ is properly embedded in $M=\{(x,y,z)\in \real^3 \bigm| z>0\}$. Let $g:N=\{(0,0,z)\in \real^3\bigm| z>0\}\to M$ be the inclusion map. Notice that $\exp^\perp:\m N_g\to M$ is a diffeomorphism and the unit speed geodesics tangent to $\Si$ do not intersect $N$ orthogonally. Thus Theorem \ref{gth} applies although $M$ and $N$ are not complete.
\end{example}
\begin{example} Consider the surface $\Si\subset \real^3$ of equation $z=\frac 1{1-x^2-y^2}$, with $x^2+y^2<1$. The
surface $\Si$ is properly embedded in $M=\real^3$. Let $N$ be the $xy$-plane and $g:N\to M$ the
inclusion map. Theorem \ref{gth}
applies. Notice that $$B=\left\{(x,y,z)\in\real^3\bigm|x^2+y^2<1 \mbox{ and } z> \frac 1{1-x^2-y^2}\right\},$$ and that the complement $A=\real^3-\bar B$ is a tubular neighborhood of $N$ in the sense of Definition \ref{tubular}.
\end{example}

Before proving Theorem \ref{gth} we would like to present some notations and a very simple result that will be used in several places in this paper. Let $g:N^n\to M^m$ be an immersion such that $\exp^\perp:\m N_g\to M$ is a diffeomorphism. Then $g$ is an embedding and the natural projection $\rho:M\to g(N)$  is a fiber bundle.  From the fact that $\exp^\perp$ is a diffeomorphism it follows easily that $\rho$ is a homotopy equivalence between $M$ and $g(N)$. Furthermore, for each point $x\in M-g(N)$ there exists a unique unit speed geodesic $\g_x:[0,+\infty)\to M$ containing $x$ which intersects $g(N)$ orthogonally at $t=0$ satisfying $\g_x(d_x)=x$ for some $d_x>0$. If $x\in g(N)$ we set $d_x=0$. We should notice that we don't know here if $d_x$ is the distance $d(x,g(N))$ (remember that we are not assuming that $M$ and $N$ are complete). Fix $\ep>0$ and let $S_\ep$ be the boundary of the $\ep$-tubular neighborhood of $g(N)$. Let $j:S_\ep\to
M$ be the inclusion map.
\begin{lemma} \label{FG} Let $g:N^n\to M^m$ be an embedding such that $\exp^\perp:\m N_g\to M$ is a diffeomorphism. Assume the notations above. Let $f:\Si^{m-1}\to M^m$ be an immersion such that the unit speed geodesics tangent to $f(\Si)$ do not intersect $g(N)$ orthogonally. Then we have:
\begin{enumerate}[(a)]
\item \label{dx} The map $\xi: M\to \real$ given by $\xi(x)= \frac 12(d_x)^2$ is smooth on $M$;
\item \label{F} If $f(\Si)\cap g(N)=\emptyset$ then the map $F:\Si\to S_\ep$ given by $F(p)=\ga_{f(p)}(\ep)$ is a local diffeomorphism and $f$ is smoothly homotopic to $(j\circ F)$;
\item \label{G} If $m-n=1$ then the map $G:\Si\to g(N)$ given by $G=\rho\circ f$ is a local diffeomorphism.
\end{enumerate}
\end{lemma}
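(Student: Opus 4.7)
The plan is to exploit the diffeomorphism hypothesis on $\exp^\perp$ to pull all three maps back to expressions on the normal bundle, and then use the hypothesis on tangent geodesics to rule out nonzero kernel vectors for the relevant differentials. For part (a), since $\exp^\perp$ is a diffeomorphism, every $x \in M$ has a unique preimage $(q(x), v(x))\in \m N_g$, and directly from the definition $d_x = |v(x)|$. Hence $\xi = \tfrac{1}{2}\,\nu \circ (\exp^\perp)^{-1}$, where $\nu:\m N_g\to \real$ sends $(q,v)\mapsto |v|^2$. The squared norm on a smooth vector bundle with a smooth fiber metric is smooth on the total space (including along the zero section), and $(\exp^\perp)^{-1}$ is smooth by assumption, so $\xi$ is smooth on all of $M$.

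For part (b), write $f(p)=\exp^\perp(q(p), v(p))$ with $q,v$ smooth and $v(p)\ne 0$ (since $f(\Si)\cap g(N)=\emptyset$). Then $F(p) = \exp^\perp\bigl(q(p),\ep v(p)/|v(p)|\bigr)$ is manifestly smooth, and
\[
H(p,t)=\exp^\perp\!\Bigl(q(p),\,\bigl((1-t)|v(p)|+t\ep\bigr)\,v(p)/|v(p)|\Bigr)
\]
gives a smooth homotopy from $f$ to $j\circ F$. To see that $F$ is a local diffeomorphism, factor $F=\Phi\circ f$ where $\Phi:M-g(N)\to S_\ep$ sends $\exp^\perp(q,v)\mapsto \exp^\perp(q,\ep v/|v|)$; the fibers of $\Phi$ are the radial arcs $t\mapsto \exp^\perp(q,tv/|v|)$ for $t>0$, so $\ker d\Phi_x=\real\cdot\g_x'(d_x)$ is a line. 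If $dF_p(w)=0$ then $df_p(w)\in\ker d\Phi_{f(p)}$; should $df_p(w)\ne 0$, the geodesic $\g_{f(p)}$ would be tangent to $f(\Si)$ at $f(p)$ while meeting $g(N)$ orthogonally at $t=0$, contradicting the hypothesis. Hence $df_p(w)=0$, and the immersion property forces $w=0$. Since $\dim\Si=m-1=\dim S_\ep$, injectivity of $dF_p$ yields the local diffeomorphism claim.

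For part (c), $m-n=1$ makes $\dim\Si=m-1=n=\dim g(N)$, so it suffices to show $dG_p$ is injective. The fibers of $\rho$ are the one-dimensional slices $\exp^\perp(\m N_g|_q)$: away from $g(N)$ the tangent to the fiber at $x=\exp^\perp(q,v)$ is $\real\cdot\g_x'(d_x)$, while on $g(N)$ it is the normal line $(T_{g(q)}g(N))^\perp$ obtained as the image of $\m N_g|_q$ under $d\exp^\perp_{(q,0)}$. If $df_p(w)\in\ker d\rho_{f(p)}$ were nonzero, the unit speed geodesic through $f(p)$ in that direction would be tangent to $f(\Si)$ and would intersect $g(N)$ orthogonally (at $t=0$ if $f(p)\in g(N)$, or at the basepoint of $\g_{f(p)}$ otherwise), again contradicting the hypothesis. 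So $df_p(w)=0$ and $w=0$.

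The main technical point I expect is the uniform description of $\ker d\rho$ needed for part (c): on $g(N)$ one loses the radial vector $\g_x'(d_x)$ and must instead identify the kernel with the image of the vertical tangent space of $\m N_g$ at the zero section under $d\exp^\perp$. Once this local identification is in place, the tangent-geodesic hypothesis applies uniformly to both cases, by essentially the same mechanism that drives the injectivity argument in part (b).
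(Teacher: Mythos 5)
Your proposal is correct and follows essentially the same route as the paper: both prove (a) by writing $\xi$ as the pullback of the squared fiber norm along $(\exp^\perp)^{-1}$, and both prove (b) and (c) by reducing injectivity of the relevant differential to the hypothesis that no geodesic tangent to $f(\Si)$ meets $g(N)$ orthogonally (the paper phrases (b) via the flow of $X=\nabla\xi$ and (c) via transversality of $f$ to the fiber geodesics $\be_z$, while you compute kernels of $d\Phi$ and $d\rho$ directly, but the mechanism is the same). Your explicit treatment of $\ker d\rho$ along the zero section in (c) matches what the paper handles implicitly by defining $\be_z$ on all of $\real$ with $\be_z(0)=z$.
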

\begin{proof}
We first prove Item \ref{dx}. For $x\in M$, we  write $(p,v)=(\exp^\perp)^{-1}(x)$ and define  $P:x\mapsto v$. Since $d_x=|P(x)|$, we conclude that $d_x$ is smooth on $M-g(N)$ and 
the map $\xi(x)=\frac 12|P(x)|^2$ is smooth on $M$. 

To prove Item \ref{F} define the smooth vector field $X(x)$ as the gradient $\na\xi(x)$, for 
any $x\in M$. Clearly we have that $X(x)=d_x\,\g_{x}'(d_x)$, if $x\notin g(N)$ and $X(x)=0$ if
$x\in g(N)$. Fix a small open subset $U\subset \Si$ such that $f|_U:U\to M$ is an embedding.
The orbits of $X$ are orthogonal to $S_\ep$ and, by hypothesis, transversal to $f(U)$. Thus, reducing
$U$ if necessary, it is not difficult to see that the flow of $X$ gives a standard diffeomorphism $\varphi:f(U)\to V$ where $V$ is a small
open subset of $S_\ep$. Note that $F|_U=\varphi\circ f|_U$, which implies that $F|_U: U\to V$ is a diffeomorphism, hence
$F$ is a local diffeomorphism.
The map $H:[0,1]\times\Si\to M$ given by
$$H(t,x)=\g_{f(x)}\left((1-t)\ep+td_{f(x)}\right)$$ provides a smooth homotopy between $f$ and $j\circ F$.

To prove Item \ref{G},  we assume that $m-n=1$. Given  $z\in M$, the fiber $\rho^{-1}(\{\rho(z)\})$ coincides 
with the image of a geodesic $\be_z:\real\to M$ which intersects
$g(N)$ orthogonally and satisfies $\be_z(0)=z$. If further there exists $x\in \Si$ such that $z=f(x)$, we know by hypothesis that $\be_z'(0)\notin df_x(T_x\Si)$, hence $f$ is transversal to $\be_z$. Thus the
map $G:\Si\to g(N)$ given by $G(x)=\rho\circ f(x)$ is a submersion, hence a local diffeomorphism.
\end{proof}

\begin{proof}[Proof of Theorem $\ref{gth}$] Since $\exp^\perp:\m N_g\to M$ is a diffeomorphism and
$\Si\cap g(N)=\emptyset$, Lemma \ref{FG} implies that the
map $F:\Si\to S_\ep$ given by $F(p)=\g_{p}(\ep)$ is
a local diffeomorphism onto its open image $\Omega=F(\Si)\subset S_\ep$.
  To prove that $F$ is a diffeomorphism it is sufficient to show that $F$ is injective. In order to show this fact we define the set
\begin{equation*}
\mathcal C=\left\{p\in \Si \ \left| \mbox{ the cardinality } \#\bigl(\g_p([0,d_p])\cap \Si\bigr)=1 \right.\right\}.
\end{equation*}
We just need to prove that $\mathcal C=\Si$.

\begin{claim} $\mathcal C\neq \emptyset.$ \end{claim}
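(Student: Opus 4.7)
The plan is to produce an explicit point of $\m C$ via a first-hit argument along one of the fibers of $\rho$. Pick any $p_0\in\Si$ (which is nonempty). Because $\exp^\perp:\m N_g\to M$ is a diffeomorphism, the unit speed geodesic $\g_{p_0}:[0,+\infty)\to M$ is well defined, starts orthogonally from $g(N)$ at $t=0$, and satisfies $\g_{p_0}(d_{p_0})=p_0$.

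I would then consider the set $T=\{t\in [0,d_{p_0}]:\g_{p_0}(t)\in\Si\}$. Since $\Si$ is properly embedded in $M$, it is a closed subset of $M$, and hence $T$ is a closed subset of the compact interval $[0,d_{p_0}]$. It is nonempty, as $d_{p_0}\in T$, so $t_0:=\min T$ exists. Moreover $t_0>0$, since $\g_{p_0}(0)\in g(N)$ and the hypothesis $\Si\cap g(N)=\emptyset$ forces $0\notin T$.

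Set $q:=\g_{p_0}(t_0)\in\Si$. Because $\g_{p_0}$ starts orthogonally from $g(N)$ and reaches $q$ at time $t_0$, the uniqueness of such a geodesic (guaranteed by $\exp^\perp$ being a diffeomorphism) identifies it with $\g_q$ and gives $d_q=t_0$. By the minimality of $t_0$ the segment $\g_{p_0}([0,t_0))$ does not meet $\Si$, hence $\g_q([0,d_q])\cap\Si=\{q\}$; this shows $q\in\m C$, and thus $\m C\neq\emptyset$.

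The only subtle ingredient is the existence of $\min T$, which comes precisely from the properness assumption on the embedding of $\Si$ (ensuring $\Si$ is closed in $M$). Note that the tangency hypothesis on geodesics is not needed at this stage; it will presumably be used later to prove that $\m C$ is both open and closed in $\Si$, so that the connectedness of $\Si$ then yields $\m C=\Si$.
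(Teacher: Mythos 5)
Your proof is correct and takes essentially the same ``first-hit'' approach as the paper: pick any $p\in\Si$, locate the first time the orthogonal geodesic through it meets $\Si$, and show that point lies in $\mathcal C$. The only (minor) difference is that the paper first uses the transversality hypothesis and proper embeddedness to conclude $\ga_p([0,d_p])\cap\Si$ is \emph{finite} before taking the smallest parameter, whereas you simply observe that the hitting-time set is a nonempty closed subset of a compact interval and take its minimum, sidestepping the finiteness and hence (as you correctly note) the tangency hypothesis at this step.
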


In fact we take $p\in\Si$. Since $\Si$ is properly embedded it follows that $\ga_p([0,d_p])\cap \Si$ is a compact set. By using the facts that $\ga_p$ intersects $\Si$ transversely and that $\Si$ is properly embedded, we obtain that $\ga_p([0,d_p])\cap \Si$ is a discrete set,
and thus it is finite. Thus we write $$\ga_p([0,d_p])\cap\Si=\{\ga_p(t_1),\ldots,\ga_p(t_k)\}$$ with $t_1<\ldots<t_k=d_p$. Notice that $t_1>0$ since $\Si\cap g(N)=\emptyset$. Set $p_1=\g_p(t_1)$. Since $\ga_{p_1}|_{[0,d_{p_1}]}=\ga_p|_{[0,t_1]}$, we
 have that \ $\# \bigl(\Si\cap \ga_{p_1}([0,d_{p_1}])\bigr)=1$, hence $p_1\in\mathcal C$, which concludes the proof of this Claim.

\begin{claim} $\Si-\mathcal C$ is open as a subset of $\Si$.
\end{claim}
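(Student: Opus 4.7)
The plan is: given $p \in \Si - \m C$, produce an open neighborhood of $p$ in $\Si$ contained in $\Si - \m C$. First I would invoke the same kind of discreteness argument used to prove $\m C \ne \emptyset$. By properness of $\Si$ and transversality of $\g_p$ to $\Si$ at each intersection point (which follows from the standing hypothesis applied to $\g_p$, since $\g_p$ meets $g(N)$ orthogonally at $t=0$), the intersection $\g_p([0,d_p])\cap \Si$ is a finite ordered set
\[
\g_p([0,d_p]) \cap \Si = \{\g_p(t_1), \ldots, \g_p(t_k)\}, \qquad 0 < t_1 < \cdots < t_k = d_p,
\]
with $k \ge 2$ because $p \notin \m C$. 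I would single out the penultimate intersection $q = \g_p(t_{k-1})$ and set $x_0 = \rho(p) \in g(N)$.

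Second, I would run an implicit function theorem argument near $q$. Consider the smooth map $\Phi : g(N) \times \real \to M$ defined by $\Phi(x,t) = \g_x(t)$; it is smooth because $\exp^\perp$ is a diffeomorphism. One has $\Phi(x_0, t_{k-1}) = q \in \Si$ and $\p_t \Phi(x_0, t_{k-1}) = \g_p'(t_{k-1}) \notin T_q\Si$. Writing $\Si$ locally around $q$ as the zero set of a submersion and applying the implicit function theorem to the composition with $\Phi$, I obtain a smooth function $s : V \to \real$ on a neighborhood $V$ of $x_0$ in $g(N)$ with $s(x_0) = t_{k-1}$ and $\g_x(s(x)) \in \Si$ for every $x \in V$.

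Finally, I would close by continuity. For $p' \in \Si$ sufficiently close to $p$, the projection $x' = \rho(p')$ lies in $V$, and by smoothness of $\xi(x) = \tfrac12 d_x^2$ (Item \ref{dx} of Lemma \ref{FG}) together with continuity of $s$, the values $s(x')$ and $d_{p'}$ stay close to $t_{k-1}$ and $t_k$ respectively. Since $0 < t_{k-1} < t_k$, shrinking the neighborhood gives $0 < s(x') < d_{p'}$, so $\g_{p'}(s(x'))$ is a point of $\g_{p'}([0,d_{p'}]) \cap \Si$ distinct from $p' = \g_{p'}(d_{p'})$. This forces $p' \notin \m C$, and hence $\Si - \m C$ is open. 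The main subtlety I would watch is the strict bound $t_{k-1} > 0$, which uses $\Si \cap g(N) = \emptyset$; without it the new intersection point $\g_{p'}(s(x'))$ could collapse into the basepoint $\g_{p'}(0) \in g(N)$ and the cardinality argument would fail.
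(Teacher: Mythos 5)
Your overall strategy mirrors the paper's: from $p\notin\m C$ you locate an interior intersection point $q$ of $\g_p$ with $\Si$ and argue that nearby radial geodesics also cross $\Si$ near $q$, producing a second intersection for nearby $p'\in\Si$. The basic idea is sound, and noting $t_{k-1}>0$ (a consequence of $\Si\cap g(N)=\emptyset$) is legitimate. But your parameterization of the radial geodesics by $g(N)$ breaks down for $m-n\ge 2$. For a point $x\in g(N)$ there is a whole $(m-n-1)$-sphere of unit normal directions, so ``$\g_x$'' is not well-defined and the map $\Phi:g(N)\times\real\to M$, $\Phi(x,t)=\g_x(t)$, only makes sense in codimension one. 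Even granting a local choice of direction, the geodesic $\g_{p'}$ is determined by the full unit normal vector, not just by the footpoint $\rho(p')$, so there is no reason that $\g_{p'}(s(\rho(p')))$ should lie in $\Si$. Dimensionally, $g(N)\times\real$ has dimension $n+1<m$ when $m-n\ge 2$, so $\Phi$ cannot parameterize a neighborhood of $q$ in $M$ in the first place.

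The fix is to parameterize by $S_\ep$ (equivalently $\m N_g^1$) rather than by $g(N)$: define $\Phi:S_\ep\times(0,\infty)\to M$, $\Phi(z,t)=\g_z(t)$, which is a diffeomorphism onto $M-g(N)$ because $\exp^\perp$ is. Transversality at $q$ gives $\p_t\Phi(F(p),t_{k-1})\notin T_q\Si$, the implicit function theorem produces $s:V\to\real$ on a neighborhood $V$ of $F(p)$ in $S_\ep$ with $\g_z(s(z))\in\Si$, and for $p'$ near $p$ one has $F(p')\in V$, $\g_{p'}(s(F(p')))\in\Si$, and $0<s(F(p'))<d_{p'}$ by continuity of $s$ and $d$. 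This implicit function theorem step is exactly reconstructing the fact, already established in Lemma \ref{FG}, that $F:\Si\to S_\ep$ is a local diffeomorphism; the paper exploits this directly by taking disjoint neighborhoods $U_1\ni x_1$, $U_2\ni x_2$ with $F(U_1)=F(U_2)$ and matching points through $F$, which avoids both the finite enumeration of intersection points and the explicit IFT computation.
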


To prove this take $x_1\in \Si-\mathcal C$. So there exists $x_2\in \Si$ with $x_2\neq x_1$ and $x_2=\g_{x_1}(t)$ for some $0<t<d_{x_1}$. In particular $F(x_1)=F(x_2)=\g_{x_1}(\ep)$. Since $F$ is
a local diffeomorphism, there exist disjoint neighborhoods of $x_1$ and $x_2$ in $\Si$ mapped by $F$ onto the same neighborhood of $\g_{x_1}(\ep)$ in $S_\ep$. Thus we conclude that $\Si-\mathcal C$ is open in $\Si$.

\begin{claim} \label{closed} $\Si-\mathcal C$ is closed as a subset of $\Si$.
\end{claim}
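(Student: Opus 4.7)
The plan is to argue by sequences. Take $p \in \Si$ in the closure of $\Si - \m C$, and choose $p_n \in \Si - \m C$ with $p_n \to p$. For each $n$, since $p_n \notin \m C$ the finite set $\ga_{p_n}([0,d_{p_n}]) \cap \Si$ contains at least two points, so we may pick $s_n \in (0, d_{p_n})$ with $q_n := \ga_{p_n}(s_n) \in \Si$. Passing to a subsequence, we assume $s_n \to s^* \in [0, d_p]$ and $q_n \to q^* \in M$; note that $d_{p_n} \to d_p$ by continuity of $\xi$ from Lemma \ref{FG}(\ref{dx}), and that $(x,t) \mapsto \ga_x(t)$ depends smoothly on $x \in M - g(N)$ since $\exp^\perp$ is a diffeomorphism, so $q^* = \ga_p(s^*)$.

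Next I would rule out the two endpoint values of $s^*$. The case $s^* = 0$ is immediate: we would have $q^* = \ga_p(0) \in g(N)$, but $q^*$ is a limit of points in $\Si$, which is closed in $M$ by proper embeddedness and disjoint from $g(N)$, a contradiction. The case $s^* = d_p$, which I expect to be the main obstacle, is handled via the map $F : \Si \to S_\ep$ from Lemma \ref{FG}(\ref{F}). Since $\ga_{p_n}$ and $\ga_{q_n}$ are the same fiber of the projection $\rho$, they agree at time $\ep$, hence $F(p_n) = F(q_n)$. On the other hand $F$ is a local diffeomorphism, hence locally injective near $p$; if $s^* = d_p$ then both $p_n \to p$ and $q_n \to p$, so for $n$ large the relation $F(p_n) = F(q_n)$ forces $p_n = q_n$, contradicting $s_n < d_{p_n}$.

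With $s^* \in (0, d_p)$ established, the limit $q^* = \ga_p(s^*)$ lies in $\Si$ by closedness, and is distinct from $p = \ga_p(d_p)$ because $\ga_p$ is injective on $[0,d_p]$ (as $\exp^\perp$ is injective on $\m N_g$). Thus $\ga_p([0,d_p]) \cap \Si$ contains at least two distinct points, which gives $p \in \Si - \m C$ and shows $\Si - \m C$ is closed in $\Si$. The subtle step is the one ruling out $s^* = d_p$: without the local injectivity of $F$ one could not exclude a ``self-collapsing'' sequence of double points converging onto $p$; everything else is either the definition of $\m C$ or straightforward closedness of $\Si$ in $M$.
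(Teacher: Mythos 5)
Your proof is correct and follows essentially the same route as the paper: pass to a convergent subsequence of the ``second intersection points,'' identify the limit as a point on $\gamma_p$, and use local injectivity of $F$ near $p$ to forbid the parameter from converging to $d_p$. The only cosmetic differences are that you establish the convergence $\gamma_{p_n}\to\gamma_p$ directly from joint continuity of $(x,t)\mapsto\gamma_x(t)$ on $(M-g(N))\times[0,\infty)$ rather than via the paper's subsequence-and-injectivity-of-$\exp^\perp$ argument, and that you isolate the endpoint case $s^*=0$ explicitly (which the paper dispatches implicitly via $\Sigma\cap g(N)=\emptyset$).
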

In fact take a sequence $x_k\to x\in \Si$ with $x_k\in \Si-\mathcal C$.
By Lemma \ref{FG} we have that $d_{x_k}\to d_x$.  Since $\g_y(0)=\rho(y)$, where
$\rho:M\to g(N)$ given by $\rho(\exp^\perp (q,v))=g(q)$ is the natural projection, we have by continuity
of $\rho$ that $\g_{x_k}(0)\to \g_x(0)$.
Since $\Si$ is properly embedded there exists an open neighborhood $U$ of $x$ in $M$ such that the intersection $\Si\cap U$ is a topological disk and the restriction $F|_{U\cap \Si}$ is a diffeomorphism onto its open image. By passing to a subsequence we may assume that $\g_{x_k}'(0)\to v$. Since
   $x_k=\g_{x_k}(d_{x_k})=\exp^\perp\bigl(\g_{x_k}(0),d_{x_k}\g_{x_k}'(0)\bigr)$, by taking limits we obtain that $x=\exp^\perp\bigl(\g_{x}(0),d_{x}v\bigr)$. On the other hand we have that $x=\g_x(d_x)=\exp^\perp\bigl(\g_{x}(0),d_{x}\g_{x}'(0)\bigr)$, thus the
   injectivity of $\exp^\perp$ implies that $v=\g_x'(0)$. Since  $\g_{x_k}(0)\to \g_x(0)$ and $\g_{x_k}'(0)\to \g_{x}'(0)$ we obtain that $\ga_{x_k}\to \ga_x$ uniformly on compact sets. Since $x_k\notin \mathcal C$ there exists a point $y_k\neq x_k$ with $y_k\in\Si$ and $y_k=\g_{x_k}(t_k)$ with $0<t_k<d_{x_k}$. The sequence $(t_k)$ is bounded since $(d_{x_k})$ converges. Again by passing to a subsequence we can suppose that $t_k$ converges to some $t_0\in [0,d_x]$, hence $y_k=\g_{x_k}(t_k)\to x_0=\g_{x}(t_0)$,  which belongs to $\Si$ because $\Si$ is properly embedded. For large $k$ the point $x_k\in U$. Since $F|_{U\cap \Si}$ is injective and $F(x_k)=F(y_k)$ we have that $y_k\not\in U$ for large $k$, hence $x_0=\g_{x}(t_0)\neq x$. Thus $t_0<d_x$, hence $x\in \Si-\mathcal C$. This concludes
   the proof of Claim \ref{closed}.

By the connectedness of $\Si$ we conclude that $\mathcal C=\Si$ which proves  the following
\begin{claim} \label{Fdiffeo} $F:\Si\to \Omega$ is a diffeomorphism.
 \end{claim}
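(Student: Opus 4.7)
The plan is to combine the local diffeomorphism property of $F$, already given by Lemma \ref{FG}\ref{F}, with the global injectivity that will follow from $\mathcal{C}=\Sigma$, in order to conclude that $F:\Sigma\to\Omega$ is a diffeomorphism. Surjectivity onto $\Omega$ is tautological, since $\Omega$ is defined as $F(\Sigma)$; once injectivity is in hand, a bijective local diffeomorphism is automatically a diffeomorphism onto its image.

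To prove injectivity I would argue as follows. Suppose $F(p)=F(q)$ with $p,q\in\Sigma$. The unit-speed geodesics $\gamma_p$ and $\gamma_q$, each issuing orthogonally from $g(N)$ at time $0$, then agree at time $\epsilon$. Since $\exp^{\perp}:\mathcal{N}_g\to M$ is a diffeomorphism, any point of $M-g(N)$ determines the unique such geodesic through it together with its parameter value, so $\gamma_p=\gamma_q$; call this common geodesic $\gamma$. Writing $p=\gamma(d_p)$ and $q=\gamma(d_q)$, we may assume $d_p\le d_q$, and then both $p$ and $q$ lie in $\gamma_q([0,d_q])\cap\Sigma$. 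Since $q\in\mathcal{C}$, that intersection is a singleton, which forces $p=q$.

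The only genuine obstacle in proving that $F:\Sigma\to\Omega$ is a diffeomorphism has already been handled above: it was establishing $\mathcal{C}=\Sigma$, which combined non-emptiness, openness of $\Sigma-\mathcal{C}$, and the delicate closedness argument of Claim \ref{closed}, simultaneously using the proper embedding of $\Sigma$, the continuity of $x\mapsto d_x$ given by Lemma \ref{FG}\ref{dx}, and the convergence of initial tangent vectors coming from the injectivity of $\exp^{\perp}$. After $\mathcal{C}=\Sigma$ is available, the reduction to injectivity of $F$ is essentially a two-line formal argument, and the claim follows.
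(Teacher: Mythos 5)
Your proposal is correct and follows the same route as the paper: once $\mathcal{C}=\Sigma$ is established by the preceding claims, injectivity of $F$ follows exactly as you argue, and the paper itself only states this implication without spelling it out. The detail you fill in — that $\exp^\perp$ being a diffeomorphism makes the geodesic through $F(p)=F(q)$ unique, so $p,q$ both lie on $\gamma_q([0,d_q])\cap\Sigma$, which is a singleton since $q\in\mathcal{C}$ — is precisely the intended reduction.
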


 From Claim \ref{Fdiffeo} we have that $\Si$ is a normal graph over the open subset $F(\Si)\subset S_\ep$ (see Definition \ref{graph}). This proves Item (\ref{it1gth}) in Theorem \ref{gth}.

Now we will prove that $\Si$ is the boundary of a tubular neighborhood of $g(N)$. Define the set
$$ A=\left\{x\in M \bigm | \g_x([0,d_x])\cap \Si=\emptyset \right\}.$$
\begin{claim} $A$ is an open subset of $M$. \end{claim}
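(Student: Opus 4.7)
The plan is to realize $A$ as the preimage of the open set $M\setminus\Si$ under a natural continuous ``shrink-the-geodesic'' map, and then apply the tube lemma.

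Since $\exp^\perp:\m N_g\to M$ is a diffeomorphism, write $(\exp^\perp)^{-1}(x)=(P_1(x),P_2(x))$ and define the smooth map
$$\Phi:M\times[0,1]\to M,\qquad \Phi(x,t)=\exp^\perp\bigl(P_1(x),\,tP_2(x)\bigr).$$
For any $x\in M$, the path $t\mapsto\Phi(x,t)$ traces out the geodesic segment $\g_x([0,d_x])$ from $\rho(x)$ to $x$; in particular $\Phi(\{x\}\times[0,1])=\g_x([0,d_x])$. Hence
$$A=\bigl\{x\in M \bigm| \Phi(\{x\}\times[0,1])\subset M\setminus\Si\bigr\}.$$

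Now I would use the properness of $\Si$: since $\Si$ is properly embedded it is closed in $M$, so $U:=M\setminus\Si$ is open and $\Phi^{-1}(U)$ is open in $M\times[0,1]$. For any $x_0\in A$ we have $\{x_0\}\times[0,1]\subset\Phi^{-1}(U)$. By compactness of $[0,1]$ (the tube lemma), there exists an open neighborhood $W$ of $x_0$ in $M$ with $W\times[0,1]\subset\Phi^{-1}(U)$. This means that for every $x\in W$ the segment $\g_x([0,d_x])$ is disjoint from $\Si$, i.e.\ $x\in A$. Thus $A$ is open.

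There is no real obstacle here, but two observations are worth highlighting. First, the smoothness of $\Phi$ (and hence continuity, which is all I need) is free because $\exp^\perp$ is a diffeomorphism and the scaling $(p,v)\mapsto(p,tv)$ on $\m N_g$ is smooth. Second, this argument uses neither the completeness of $M$ or $N$ nor the transversality hypothesis on tangent geodesics to $\Si$; the only property of $\Si$ that is invoked is closedness in $M$, supplied by the proper embedding assumption. This fits the remark made right after Theorem~\ref{gth} that completeness is compensated by local arguments exploiting the properness of $\Si$.
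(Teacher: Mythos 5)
Your proof is correct, but it takes a genuinely different route from the paper's. The paper proves that $M-A$ is closed by a sequential argument: take $x_k\in M-A$ with $x_k\to x$, choose $y_k=\g_{x_k}(t_k)\in\Si$ with $0<t_k\le d_{x_k}$, pass to a subsequence so that $t_k\to t_0\in[0,d_x]$ (using continuity of $x\mapsto d_x$ and compactness of the unit sphere in the normal fiber), deduce $y_k\to\g_x(t_0)\in\Si$ by properness, and conclude $x\in M-A$. You instead prove $A$ is open directly, by packaging the geodesic segment $\g_x([0,d_x])$ as the image of $\{x\}\times[0,1]$ under the continuous map $\Phi(x,t)=\exp^\perp(P_1(x),tP_2(x))$ and applying the tube lemma to the open set $\Phi^{-1}(M\setminus\Si)$. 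Both arguments invoke the same two hypotheses --- that $\exp^\perp$ is a diffeomorphism and that $\Si$ is closed in $M$ --- but yours is more conceptual: it eliminates the subsequence extraction and the uniform-convergence-of-geodesics step, replacing them with a single appeal to compactness of $[0,1]$. The paper's sequential approach has the minor advantage that it sets up machinery (convergence of $\g_{x_k}$ and of the parameters $t_k$) that it has already deployed in the proof of Claim~\ref{closed} and can simply cite, so it is shorter in context; your version is self-contained and arguably cleaner as a standalone argument.
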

      In fact it suffices to prove that $M-A$ is closed. Consider a sequence $x_k\in M-A$ such that $x_k\to x$. Thus for each $k$ there exists $y_k\in \g_{x_k}([0,d_{x_k}])\cap \Si$ and we write $y_k=\g_{x_k}(t_k)$, with $0<t_k\le d_{x_k}$. As in the proof of Claim \ref{closed}, we obtain by passing to a subsequence that $t_k\to t_0\in [0,d_x]$, hence $y_k=\g_{x_k}(t_k)\to y=\g_{x}(t_0)$. Since $\Si$ is properly embedded we have that $y\in \Si$, hence $y\in \g_x([0,d_x])\cap \Si$. Thus we conclude that $x\in M-A$, hence $M-A$ is closed.

\begin{claim} \label{claimboundary} $\bar A-A=\Si$.
\end{claim}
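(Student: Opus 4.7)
The plan is to prove the two inclusions $\Sigma \subseteq \bar{A} - A$ and $\bar{A} - A \subseteq \Sigma$ separately.

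For $\Sigma \subseteq \bar{A} - A$, fix $p \in \Sigma$. Then $p = \gamma_p(d_p) \in \gamma_p([0, d_p]) \cap \Sigma$, so $p \notin A$. To see $p \in \bar{A}$, I use the identity $\mathcal{C} = \Sigma$ already established in the proof of Claim \ref{Fdiffeo}, which says $\gamma_p([0, d_p]) \cap \Sigma = \{p\}$. For each $t \in (0, d_p)$, the point $x_t := \gamma_p(t)$ lies on the normal geodesic $\gamma_p$, and the uniqueness afforded by $\exp^\perp$ being a diffeomorphism forces $\gamma_{x_t} = \gamma_p$ and $d_{x_t} = t$. Hence $\gamma_{x_t}([0, t]) \cap \Sigma = \emptyset$, so $x_t \in A$; since $x_t \to p$ as $t \to d_p^-$, we get $p \in \bar{A}$.

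For $\bar{A} - A \subseteq \Sigma$, fix $x \in \bar{A} - A$. Because $x \notin A$, the set $\gamma_x([0, d_x]) \cap \Sigma$ is nonempty and closed (since $\Sigma$ is properly embedded), so it has a minimum time $t_1$. The assumption $g(N) \cap \Sigma = \emptyset$ forces $t_1 > 0$, and clearly $t_1 \le d_x$. If $t_1 = d_x$, then $x = \gamma_x(t_1) \in \Sigma$, and we are done. Otherwise $t_1 < d_x$, and I will derive a contradiction by exhibiting a whole neighborhood of $x$ disjoint from $A$.

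Set $p = \gamma_x(t_1) \in \Sigma$. The hypothesis that geodesics tangent to $\Sigma$ do not meet $g(N)$ orthogonally tells us $\gamma_x$ crosses $\Sigma$ transversally at $p$. Consider the smooth map $\Psi(y, s) := \gamma_y(s)$ on an open neighborhood of $(x, t_1)$ in $(M - g(N)) \times (0, \infty)$; smoothness follows from $\exp^\perp$ being a diffeomorphism, which makes $(\rho(y), \gamma_y'(0))$ a smooth function of $y$ (cf.\ Lemma \ref{FG}). Since $\partial_s \Psi(x, t_1) = \gamma_x'(t_1)$ is transversal to $T_p \Sigma$, the implicit function theorem (applied to $\Psi$ composed with a local defining function of $\Sigma$ near $p$) yields a neighborhood $U$ of $x$ and a smooth $s \colon U \to \mathbb{R}$ with $s(x) = t_1$ and $\Psi(y, s(y)) \in \Sigma$ for all $y \in U$. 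Continuity of $d_y$ (Lemma \ref{FG}) combined with $s(x) = t_1 < d_x$ permits us to shrink $U$ so that $0 < s(y) < d_y$ on $U$, whence $\gamma_y([0, d_y]) \cap \Sigma \neq \emptyset$ and $y \notin A$. This contradicts $x \in \bar{A}$, and the proof is complete. The main obstacle is this last step: turning pointwise transversality of $\gamma_x$ with $\Sigma$ at $p$ into an open statement around $x$ in $M$, which relies crucially on the global diffeomorphism property of $\exp^\perp$ to obtain a genuinely smooth normal-geodesic variation.
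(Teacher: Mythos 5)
Your proof is correct and mirrors the overall plan of the paper: prove the two inclusions separately, using $\mathcal C=\Si$ for $\Si\subset\bar A-A$ and a contradiction argument for $\bar A - A\subset\Si$ that exhibits an open neighborhood of a hypothetical boundary point contained in $M-A$. The one step you handle differently is the construction of that open neighborhood. The paper first uses injectivity of $F$ (a byproduct of $\mathcal C=\Si$) to conclude $\g_p([0,d_p])\cap\Si$ is a single point $q$, and then builds the open set $W=\{\g_x(t):x\in U,\ d_p-\de<t<d_p+\de\}$ by sweeping a small neighborhood $U\subset\Si$ of $q$ along the normal geodesics, relying on $\exp^\perp$ being a diffeomorphism to argue $W$ is open. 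You instead take the first contact time $t_1=\min\{t:\g_x(t)\in\Si\}$ — which removes any need to invoke $F$'s injectivity — and then apply the implicit function theorem at the transversal intersection $\g_x(t_1)\in\Si$ to obtain a smooth local intersection time $s(y)$, squeezing $0<s(y)<d_y$ by continuity. Both routes implement the same geometric idea (transversality of the normal geodesic with $\Si$ propagates to nearby normal geodesics), but your IFT packaging gives the openness for free and sidesteps the verification that the paper's set $W$ is open. Your argument is sound.
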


In fact, since $\mathcal C=\Si$, given any $p\in\Si$ we have that $\g_p([0,d_p])-\{p\}\subset A$. Thus $\Si\subset \bar A$. Clearly $\Si\cap A=\emptyset$ hence $\Si\subset \bar A-A$.  Now take $p\in \bar A-A$ and assume by contradiction that $p\not\in \Si$. Since $p\not\in A$ we have
$\g_p([0,d_p])\cap \Si\not=\emptyset$. Since the map $F$ is injective we obtain that $\#(\g_p([0,d_p])\cap \Si)\le 1$, hence
    $\g_p([0,d_p])$ intersects $\Si$ transversely at a unique point $q\in\Si$. Thus it holds that $\ga_q([0,+\infty))=\ga_p([0,+\infty))$ and $d_q<d_p$. Let $U$ be a small neighborhood of $q$ in $\Si$. For a small $0<\delta<d_p-d_q$ consider the set $$W=\{\g_x(t)\bigm|x\in U\ \mbox{ and }\ d_p-\delta<t<d_p+\delta\}.$$
Notice that $p=\g_q(d_p)$, hence $p\in W$. Since $\exp^\perp$ is a diffeomorphism the set $W$ is an open neighborhood of $p$.
By taking $U$ sufficiently small and using that $d_q<d_p-\de$, we obtain by continuity that $d_x<d_p-\de$ for all $x\in U$. Now
take $y\in W$. Then there exists $x\in U$ and $d_p-\de<t<d_p+\de$ such that $y=\g_x(t)=\g_x(d_y)$. Since $d_x<d_p-\de<t=d_y$ we have that
$x\in \g_y([0,d_y))\cap\Si$, hence $y\notin A$. Thus we have that $W\subset M-A$ which contradicts the fact that $p\in \bar A$. This concludes the proof of Claim \ref{claimboundary}.

Let us prove that $\bar A$ is a manifold with smooth boundary $\Si$.
Take a point $p\in \Si$. For a small neighborhood $V$ of $p$ in $\Si$ and small $\ep>0$, consider the set
$$\Gamma=\left\{\g_x(t)\bigm| x\in V;\ d_x-\ep <t\leq d_x\right\}.$$
By Lemma \ref{FG} we have that $d_x$ depends smoothly on $x$ on $M-g(N)$. Since $\exp^\perp$ is a diffeomorphism we conclude  that $\Ga$ is a parameterized neighborhood of $p$ in $\bar A$. Furthermore we have
$$\Ga=\left\{\g_x(t)\bigm| x\in V;\ d_x-\ep <t< d_x\right\}\cup\left\{\g_x(t)\bigm| x\in V;\ t= d_x\right\}=(A\cap\Ga)\cup (\Si\cap\Ga).$$
  Thus $\bar A$ is a smooth manifold with boundary $\Si$. We have proved that $\g_x([0,d_x])\subset A$ for all $x\in A$ and
  $\g_x([0,d_x])\subset \bar A$ for all $x\in \bar A$. Thus we conclude that $A$ and $\bar A$ are tubular neighborhoods of $g(N)$.

 To see that $\Si$ disconnects $M$ consider $B=M-\bar A$. We have $M=\bar A\sqcup B=A\sqcup\Si\sqcup B$. In particular we have $M-\Si=A\sqcup B$. Fix $p\in \bar B$. Since $p\notin A$ we conclude by the proof of Claim \ref{claimboundary}
 that $\ga_p$ intersects $\Si$ transversely at a unique point. Thus we have that
 $$\bar B=\left\{p\in M \bigm | \# (\g_p([0,d_p])\cap \Si)=1 \right\}=\left\{\g_x(t)\bigm| x\in \Si \mbox{\ and\ }t \ge d_x\right\},$$ which
 proves that $B$ is a connected non-compact manifold with boundary $\Si$. It is easy to see that 
 $\bar B$ is contained in $\rho^{-1}(\rho(\Si))$, since $F(\bar B)=F(\Si)=\Omega$.  
 Items (\ref{it2gth}) and (\ref{it3gth}) in Theorem \ref{gth} are proved.
 
 As in the proof of Lemma \ref{FG}, we consider the vector field $X(x)=d_x\g_x'(d_x)$ if $x\notin g(N)$ 
 and $X(x)=0$ if $x\in g(N)$.  
 Item (\ref{isotopy}) follows easily from the facts that the orbits of $X$ 
 intersect $\Omega$ and $\Si$ transversely (according with Item (\ref{it3gth})) and that $\rho^{-1}(\rho(\Si))$ is 
 invariant under the flow of $X$. 
\end{proof}

\section{\bf Proof of Theorems  \ref{tgmth}, \ref{tgmth2} and \ref{covering}}\label{proofthms}

To prove Theorems \ref{tgmth}, \ref{tgmth2} and \ref{covering} we first need to show the following simple
lemmas.
\begin{lemma} \label{orthogon} Let $f:\Si\to M$ be a totally geodesic immersion of a closed manifold $\Si$ in a Riemannian manifold $M$ and $g:N\to M$ an immersion. Assume that either
\begin{enumerate}[(a)]
\item $\exp^\perp:\m N_g\to M$ is a diffeomorphism; or
\item $\exp^\perp:\m N_g\to M$ is a covering map and $\Si$ has finite fundamental group.
\end{enumerate}
Then the unit speed geodesics of $M$ tangent to $f(\Si)$ do not intersect $g(N)$ orthogonally.
\end{lemma}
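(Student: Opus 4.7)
The plan is to argue by contradiction in both cases. Suppose a unit speed geodesic $\g:\real\to M$ is tangent to $f(\Si)$ at some $\g(t_0)=f(q_0)$ and satisfies $\g(0)=g(p_0)$ with $\g'(0)\perp dg(T_{p_0}N)$. Because $f$ is totally geodesic and $\Si$ is closed, hence complete, we have $\g=f\circ c$ where $c:\real\to\Si$ is the geodesic of $\Si$ with $c(t_0)=q_0$ and $df_{q_0}(c'(t_0))=\g'(t_0)$; in particular $\g$ is defined on all of $\real$. The strategy is to exhibit a continuous real-valued function on a compact domain whose pullback along $c$ equals $t\mapsto t^2/2$, which is absurd.

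In case (a) the function to use is $\xi:M\to\real$ defined by $\xi(\exp^\perp(p,v))=\tfrac12|v|^2$, which is smooth because $\exp^\perp$ is a diffeomorphism. Since $\exp^\perp$ is defined on all of $\m N_g$, the curve $t\mapsto\exp^\perp(p_0,t\g'(0))$ exists for every $t$ and is a geodesic of $M$ with the same initial data as $\g$; uniqueness of geodesics forces $\g(t)=\exp^\perp(p_0,t\g'(0))$ for all $t\in\real$, so $\xi(\g(t))=t^2/2$. But $\xi\circ f:\Si\to\real$ is continuous on compact $\Si$, hence bounded, contradicting $\xi(f(c(t)))=t^2/2\to\infty$.

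For case (b), $\xi$ need not descend to $M$, but the analogous $\ti\xi:\m N_g\to\real$, $\ti\xi(p,v)=\tfrac12|v|^2$, is globally smooth. Endow $\m N_g$ with the metric pulled back from $M$ via $\exp^\perp$, turning the latter into a local isometry. The curve $\ti\g(t):=(p_0,t\g'(0))$ lies in $\m N_g$ and lifts $\g$ through $\exp^\perp$, starting at $(p_0,0)$. To realize it as $\ti f\circ\ti c$ for some lift of $f$, form the pullback covering $\Si_0:=f^*(\m N_g)\to\Si$ and let $\Si_0'$ be the connected component containing $(c(0),(p_0,0))$. Since $\pi_1(\Si)$ is finite, $\Si_0'\to\Si$ has finitely many sheets, so $\Si_0'$ is a compact manifold; let $\ti f:\Si_0'\to\m N_g$ be the tautological projection to the second factor and let $\ti c:\real\to\Si_0'$ be the unique lift of $c$ with $\ti c(0)=(c(0),(p_0,0))$. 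Then $\ti f\circ\ti c$ is a lift of $\g$ through $(p_0,0)$, so by uniqueness of covering lifts it coincides with $\ti\g$, giving $\ti\xi(\ti f(\ti c(t)))=t^2/2$; this contradicts the boundedness of $\ti\xi\circ\ti f$ on the compact $\Si_0'$.

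The most delicate step is case (b): verifying that the pullback covering $f^*(\m N_g)\to\Si$ has compact connected components (this is where finiteness of $\pi_1(\Si)$ enters), and identifying the covering lift of $\g$ in $\m N_g$ with the explicit curve $(p_0,t\g'(0))$ so that the expression $\ti f\circ\ti c=\ti\g$ is forced. Once the covering-space framework is set up, the clash between the quadratic growth of $\ti\xi$ along $\g$ and the boundedness of $\ti\xi\circ\ti f$ on compact $\Si_0'$ (respectively $\xi\circ f$ on $\Si$ in case (a)) immediately delivers the contradiction.
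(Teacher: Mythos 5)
Your argument is correct and follows essentially the same route as the paper: in case (a) both proofs trap the geodesic inside the compact set $f(\Si)$ while showing, via $\exp^\perp$, that it escapes to infinity, and in case (b) both pass to a compact covering of $\Si$ (you use the pullback $f^*(\m N_g)$, the paper the universal cover) to lift the picture into $\m N_g$. The only difference is organizational — you re-run the unboundedness estimate with $\ti\xi$ on the pullback cover instead of invoking case (a) as a black box after lifting.
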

\begin{proof} First, we will assume that $(a)$ is satisfied. Assume by contradiction that there exists a unit speed geodesic $\g:[0,+\infty)\to M$ that starts orthogonally to $g(N)$ and is tangent to $f(\Si)$. Since $f$ is totally geodesic we conclude that $\ga([0,+\infty))\subset f(\Si)$, hence $\ga$ is bounded, since $\Si$ is
 compact. We write $\g(t)=\exp^\perp(q,tv)$, $t\geq 0$, for some $(q,v)\in \m N_g$ with $|v|=1$. Since $\exp^\perp$ is a diffeomorphism we have that $\ga$ is unbounded, which gives us a contradiction.

Now we assume (b). Consider on $\m N_g$ the metric induced by $\exp^{\perp}=\exp^\perp_g$ and define $\hat g:N\to \m N_g$ given by $\hat g(x)=(x,0)$. Let $\nu:\hat\Si\to \Si$ be the universal covering of $\Si$ with the induced metric. By the Fundamental Lifting Theorem, $f$ admits a lifting $\hat f:\hat \Si\to \m N_g$ such that the diagram below is commutative.
\begin{equation}\label{hat}
\xymatrix{
\hat\Si\ar[rr]^{\hat f}\ar[d]_{\nu} && \m N_g\ar[d]_{\exp^\perp_g}&& \\
\Si\ar[rr]^f                        &&  M &&  N\ar[ll]_g\ar[ull]_{\hat g}}\\
\end{equation}

Since $\Si$ is compact with finite fundamental group it follows that $\hat \Si$ is compact. Note that $\hat f$ is totally geodesic. Any unit speed geodesic $\g$ on $M$ starting orthogonally from $g(N)$ is lifted by $\exp^\perp_g$ to
a curve $\hat\g=\hat\g_{(q,v)}:[0,+\infty)\to \m N_g$ given by $\hat\g(t)=(q,tv)$, for some
$(q,v)\in\m N_g$ with $|v|=1$. Since we are considering on $\m N_g$ the metric induced by $\exp^\perp_g$, we
 have that the curves $\hat\g_{(q,v)}$ are the unit speed geodesics of $\m N_g$ that start orthogonally from $\hat g(N)=N\times\{0\}$, hence $\exp_{\hat g}^\perp:\m N_{\hat g}\to \m N_g$ is a diffeomorphism. Thus we may use Item (a) to conclude that the unit speed geodesics tangent to $\hat f(\hat \Si)$ do not intersect $\hat g(N)$ orthogonally. Since $\exp^\perp_g$ is a local isometry we conclude that the unit speed geodesics tangent to $f(\Si)$ do not intersect $g(N)$ orthogonally.
\end{proof}

\begin{lemma} \label{intersect} Let $f:\Si\to M$ and $g:N\to M$ be immersions in a Riemannian manifold $M$ with $\dim(\Si)>\dim(N)$. Assume that the unit speed geodesics of $M$ tangent to $f(\Si)$ do not intersect $g(N)$ orthogonally. Then $f(\Si)\cap g(N)=\emptyset$.
\end{lemma}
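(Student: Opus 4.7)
The plan is to argue by contradiction using a linear algebra dimension count at a hypothetical intersection point. First, I will suppose $f(\Si)\cap g(N)\neq\emptyset$ and choose $p\in\Si$, $q\in N$ with $f(p)=g(q)$. Since $f$ and $g$ are immersions, the subspaces $V:=df_p(T_p\Si)$ and $W:=dg_q(T_qN)$ of the ambient tangent space $T_{f(p)}M$ have dimensions $\dim\Si$ and $\dim N$, respectively.

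The central observation will be that $V$ must meet the orthogonal complement $W^\perp\subset T_{f(p)}M$ nontrivially. Indeed, $W^\perp$ has dimension $\dim M-\dim N$, and the hypothesis $\dim\Si>\dim N$ gives
$$\dim V+\dim W^\perp=\dim\Si+\dim M-\dim N>\dim M,$$
so $V\cap W^\perp\neq\{0\}$. I can then pick a unit vector $v$ in this intersection.

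To finish, I will consider the unit speed geodesic $\g$ of $M$ with $\g(0)=f(p)=g(q)$ and $\g'(0)=v$. Because $v\in W^\perp=(dg_q(T_qN))^\perp$, the geodesic $\g$ meets $g(N)$ orthogonally at $t=0$; because $v\in V=df_p(T_p\Si)$, it is tangent to $f(\Si)$ at $t=0$. This contradicts the standing assumption that no unit speed geodesic tangent to $f(\Si)$ meets $g(N)$ orthogonally, so $f(\Si)\cap g(N)=\emptyset$.

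I do not foresee any real obstacle here: the argument amounts to a one-line dimension count combined with the contrapositive of the hypothesis. The only point requiring attention is the strict inequality in the dimension count, which is immediate from $\dim\Si>\dim N$.
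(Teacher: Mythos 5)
Your proof is correct and follows essentially the same route as the paper: assume an intersection point exists, use the dimension count $\dim\Si+(\dim M-\dim N)>\dim M$ to find a unit vector simultaneously tangent to $f(\Si)$ and normal to $g(N)$, and let the resulting geodesic contradict the hypothesis. The only difference is cosmetic (you phrase the count via $\dim V+\dim W^\perp>\dim M$ while the paper writes the equivalent lower bound $\dim(V\cap W^\perp)\ge 1$).
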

\begin{proof} Assume by contradiction that $f(\Si)\cap g(N)\not=\emptyset$. Then there exist $p\in \Si$ and $q\in N$ with $f(p)=g(q)$. Set $$V=\bigl(dg_q(T_qN)\bigr)^\perp\cap \bigl(df_p(T_{p}\Si)\bigr),$$ where $d f_{ p}$ denotes the derivative of $f$ at
$p$ and $T_{p}\Si$ the tangent space. Then we have:
\begin{eqnarray*}\dim(V)&\ge&\dim\bigl(dg_q(T_qN)\bigr)^\perp+\dim\bigl(df_p(T_{p}\Si)\bigr)-\dim(M)\\&=& (\dim(M)-\dim(N))+\dim(\Si)-\dim(M)\geq 1.
\end{eqnarray*}
Thus we can take $w\in V$ with $|w|=1$. Consider the geodesic $\g:\mathbb R\to M$ satisfying $\ga(0)=g(q)$ and $\g'(0)=w$. Thus $\ga$ is a geodesic tangent to $f(\Si)$ and orthogonal to $g(N)$, which is a contradiction.
\end{proof}

From Lemmas \ref{orthogon} and \ref{intersect} we obtain the following
\begin{corollary} \label{lastclaim} Let $f:\Si\to M$ be a totally geodesic immersion of a closed manifold $\Si$ in a Riemannian manifold $M$ and $g:N\to M$ an immersion with dimension $\dim(\Si)>\dim(N)$. Assume that either
\begin{enumerate}[(a)]
\item $\exp^\perp:\m N_g\to M$ is a diffeomorphism; or
\item $\exp^\perp:\m N_g\to M$ is a covering map and $\Si$ has finite fundamental group.
\end{enumerate} Then $f(\Si)\cap g(N)=\emptyset$.
\end{corollary}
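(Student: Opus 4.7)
The corollary is a direct consequence of chaining Lemma \ref{orthogon} and Lemma \ref{intersect}, so the proof proposal is essentially a one-step combination rather than a substantive new argument.

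The plan is as follows. First, I would invoke Lemma \ref{orthogon} under whichever of the two hypotheses (a) or (b) is assumed. The lemma is stated precisely so that either case is allowed, and its conclusion is that the unit speed geodesics of $M$ tangent to $f(\Sigma)$ do not meet $g(N)$ orthogonally. This uses no information about the relative dimensions of $\Sigma$ and $N$, only the totally geodesic hypothesis on $f$ together with the global structure of $\exp^\perp$.

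Next, I would feed this geodesic-tangency conclusion into Lemma \ref{intersect}, together with the codimension hypothesis $\dim(\Sigma) > \dim(N)$. The point is that if a point $p\in \Sigma$ were mapped by $f$ to a point of $g(N)$, then a linear-algebra dimension count at that intersection point (exactly the one carried out in the proof of Lemma \ref{intersect}) produces a nonzero vector $w$ lying simultaneously in $df_p(T_p\Sigma)$ and in the normal space to $g(N)$. The geodesic issuing from that point in direction $w$ would then be simultaneously tangent to $f(\Sigma)$ and orthogonal to $g(N)$, contradicting the output of Lemma \ref{orthogon}. Hence $f(\Sigma)\cap g(N) = \emptyset$.

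There is no real obstacle here: the lemmas were designed precisely to split this argument into its two ingredients (one global/geometric via $\exp^\perp$, one local/linear-algebraic via dimensions), and the corollary is their immediate concatenation. The only thing worth pointing out explicitly in writing is that in case (b) one does not need to separately lift to the universal cover, because Lemma \ref{orthogon} already handled that lifting internally.
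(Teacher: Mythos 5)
Your proposal is correct and matches the paper exactly: the paper presents Corollary \ref{lastclaim} as an immediate consequence of Lemma \ref{orthogon} followed by Lemma \ref{intersect}, which is precisely the chain you describe. Nothing further is needed.
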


\begin{remark} Example \ref{example m-n1} shows that the conclusion of Corollary \ref{lastclaim} fails\ if\
$\dim(\Si)=\dim(N)$.
\end{remark}

The next result will be important to prove Theorems \ref{tgmth}, \ref{tgmth2} and \ref{covering}.
\begin{theorem}\label{completeness} Let $f:\Si^{m-1}\to M^m$ be an immersion of a closed manifold $\Si$ in a Riemannian manifold $M$. Let $g:N\to M$ be an embedding such that $\exp^\perp:\m N_g\to M$ is a diffeomorphism. Assume that the unit speed geodesics of $M$ tangent to $f(\Si)$ do not intersect $g(N)$ orthogonally. Then $M$ is complete,  $N$ is compact and the following conditions hold:
\begin{enumerate}[(1)]
\item \label{covering2} if $m-n=1$ then $\rho\circ f:\Si\to g(N)$ is a covering map, where $\rho:M\to g(N)$ is the 
natural strong deformation retraction $\rho(\exp^\perp(x,v))=g(x)$;
\item \label{mn>2} if $m-n\geq 2$ then \ $f(\Si)\cap g(N)=\emptyset$, the map $F:\Si\to S_\ep\cong\m N_g^1$ given by $F(p)=\g_{f(p)}(\ep)$ is 
 a covering map and $f$ is smoothly homotopic to $j\circ F$, where $j:S_\ep\to M$ is the inclusion map; if further  
 $f$ is an embedding we have:
     \begin{enumerate}[(a)]
     \item\label{embedding} $\bar F=F\circ f^{-1}:f(\Si)\to S_\ep$ is a diffeomorphism 
     via an ambient isotopy, hence $f$ is $(m-n-1)$-connected; 
     \item\label{disjointunionsplit}
     $M-f(\Si)=A\sqcup B$ where the closure $\bar A$ is a compact tubular neighborhood of $g(N)$ with boundary $f(\Si)$, and $\bar B$ is an unbounded manifold with boundary $f(\Si)$.
    \end{enumerate}
\end{enumerate}
\end{theorem}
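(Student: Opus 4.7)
The plan is to handle the cases $m-n=1$ and $m-n\ge 2$ in parallel: first I would deduce compactness of $N$ from the local-diffeomorphism portion of Lemma \ref{FG}, then upgrade to completeness of $M$ via a Gauss-Lemma-type Lipschitz estimate, and finally read off the structural statements, either directly or, in the embedded subcase, by reducing to Theorem \ref{gth} applied to $f(\Si)$. In case (\ref{covering2}), Lemma \ref{FG}(\ref{G}) already gives that $G=\rho\circ f:\Si\to g(N)$ is a local diffeomorphism; since $\Si$ is compact, $G(\Si)$ is simultaneously closed (compact subset of the Hausdorff space $g(N)$) and open (image of a local diffeomorphism), and since $g(N)\cong N$ is connected, $G$ must be surjective, so $g(N)$, hence $N$, is compact. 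In case $m-n\ge 2$, Corollary \ref{lastclaim} first forces $f(\Si)\cap g(N)=\emptyset$, so Lemma \ref{FG}(\ref{F}) makes $F:\Si\to S_\ep$ a local diffeomorphism; the sphere bundle $S^{m-n-1}\to S_\ep\to g(N)$ has connected fibre (as $m-n\ge 2$) over the connected base, so $S_\ep$ is connected and the same open–closed argument gives surjectivity of $F$, hence compactness of $S_\ep$, whence of $N$.

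For completeness of $M$, I would exploit the identity already noted in the proof of Lemma \ref{FG} that $\nabla\xi=d_x\,\gamma_x'(d_x)$, where $\xi=\tfrac 12 d_x^2$. Since $\gamma_x'$ is a unit vector this gives $|\nabla d_x|\equiv 1$ on $M-g(N)$, so $d_x$ is $1$-Lipschitz along every unit-speed curve in $M$. Any unit-speed geodesic $\alpha:[0,a)\to M$ with $a<\infty$ would then satisfy $d_{\alpha(t)}\le d_{\alpha(0)}+a$ for all $t\in[0,a)$, so $\alpha([0,a))$ lies in $\exp^\perp(\{(q,v)\in\m N_g:|v|\le d_{\alpha(0)}+a\})$, which is the continuous image of a closed ball bundle over the (now known to be) compact $N$ and is therefore compact. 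Passing to a subsequential limit $\alpha(t_n)\to x^\ast\in M$ with $t_n\nearrow a$ and invoking the local existence of geodesics at $x^\ast$ extends $\alpha$ past $a$, proving $M$ is complete.

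For the structural conclusions, in case $m-n=1$ the surjective local diffeomorphism $G$ is proper (compact source) and so a covering map, which is item (\ref{covering2}). In case $m-n\ge 2$ the analogous argument turns $F$ into a covering map, and the homotopy $f\simeq j\circ F$ is supplied directly by Lemma \ref{FG}(\ref{F}), settling the first part of (\ref{mn>2}). If $f$ is further an embedding, then $f(\Si)$ is compact and hence a properly embedded hypersurface of $M$ disjoint from $g(N)$ for which the geodesic-tangency assumption is inherited, so Theorem \ref{gth} applies to $f(\Si)$ and directly yields the tubular decomposition in (\ref{disjointunionsplit}) together with the ambient isotopy of $f(\Si)$ onto its image $\Omega\subset S_\ep$; connectedness of $S_\ep$ combined with compactness of $f(\Si)$ forces $\Omega=S_\ep$, giving the diffeomorphism $\bar F$ in (\ref{embedding}). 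The $(m-n-1)$-connectedness of $f$ then follows from $f\simeq j\circ F$ together with the fact that $j:S_\ep\to M$ is $(m-n-1)$-connected, via the fibre sequence $S^{m-n-1}\to S_\ep\to g(N)$ combined with the homotopy equivalence $\rho:M\to g(N)$. The main delicate point in the whole argument is the identity $|\nabla d_x|=1$ underpinning completeness, which is precisely where the diffeomorphism hypothesis on $\exp^\perp$ does the real work; everything else amounts to routine organization of compactness and fibre-bundle data.
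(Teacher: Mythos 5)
Your proof is essentially correct, and the structural parts (compactness of $N$, the covering maps $G$ and $F$, the homotopy $f\simeq j\circ F$, and the embedded-case reduction to Theorem \ref{gth}) follow the same route as the paper. The one genuinely different step is your argument for completeness of $M$: the paper derives it from Corollary \ref{mendzhou}, which is imported from Proposition 4.1 of \cite{mz} once $N$ is known to be compact, whereas you give a direct and self-contained argument from the identity $\nabla\xi = d_x\,\g_x'(d_x)$ of Lemma \ref{FG}: dividing by $d_x$ shows $|\nabla d_x|\equiv 1$ off $g(N)$, you then bound any unit-speed geodesic in the compact set $\exp^\perp(\{|v|\le R\})$ (using compactness of $N$), and extend by uniform local existence. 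This is a clean alternative that replaces the external reference; the only place it needs a word of care is in asserting the $1$-Lipschitz bound across $g(N)$ where $d_x$ is not differentiable, but this is immediate because $\xi=\tfrac12 d_x^2$ is globally smooth with $|\nabla\xi|=d_x$, forcing $|\tfrac{d}{dt}d_{\alpha(t)}|\le 1$ wherever $d_{\alpha(t)}>0$ and hence everywhere by continuity. One small citation slip: you invoke Corollary \ref{lastclaim} for $f(\Si)\cap g(N)=\emptyset$, but that corollary requires $f$ to be totally geodesic, which is not assumed in Theorem \ref{completeness}; what you actually need (and what the paper uses) is Lemma \ref{intersect}, which applies directly under the stated geodesic-tangency hypothesis. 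Similarly, to turn the Theorem \ref{gth} isotopy into an ambient one you should record, as the paper does, that $\rho^{-1}(\rho(f(\Si)))=\rho^{-1}(g(N))=M$ once $F$ is surjective; and for the compactness of $\bar A$ in (2b), note that $\bar A$ is contained in $\exp^\perp(\{|v|\le \max_{f(\Si)} d\})$, which is compact since $f(\Si)$ and $N$ are. These are minor gaps of bookkeeping rather than of ideas.
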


Under the hypotheses of Theorem \ref{completeness}, let $\iota:g(N)\to M$ be the inclusion map.
We know that $\iota\circ\rho$ is homotopic to the identity map on $M$. Thus if $m-n\ge 2$ for all $i$ we have  that
\begin{equation}\label{iota}f_*^i=(\iota\circ\rho\circ f)_*^i=(\iota\circ\rho|_{S_\ep}\circ F)_*^i=\iota_*^i\circ(\rho|_{S_\ep})_*^i\circ F_*^i .\end{equation}
Considering the long exact sequence 
\begin{equation}\label{longfiber}
\xymatrix{\pi_i(S^{m-n-1})\ar[r]&\pi_i(S_\ep)\ar[r]^{(\rho|_{S_\ep})_*^i}&\pi_i(g(N))\ar[r]&\pi_{i-1}(S^{m-n-1})
}
\end{equation}and the fact that $F:\Si\to S_\ep$ is a covering map, Theorem \ref{completeness} easily 
implies the following
\begin{corollary} \label{corollary_exact} Under the hypotheses of Theorem \ref{completeness} it holds that:
\begin{enumerate} [(a)]
\item If $m-n=1$ then $f_*^i:\pi_i(\Si)\to \pi_i(M)$ is a monomorphism for $i=1$ and an isomorphism for 
$i\ge 2$;
\item \label{monom2}If $m-n= 2$ then  $f_*^i:\pi_i(\Si)\to \pi_i(M)$ is a monomorphism  for $i=2$ and an isomorphism for $i\ge 3$;
\item If $m-n\ge 3$ then  $f_*^i:\pi_i(\Si)\to \pi_i(M)$ is a monomorphism  for $i=1$, an isomorphism for $2\le i\le m-n-2$
and an epimorphism for $i=m-n-1$.
\end{enumerate}
\end{corollary}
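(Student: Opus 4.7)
The plan is to combine the factorizations furnished by Theorem \ref{completeness} and equation (\ref{iota}) with two standard facts: a covering map induces a monomorphism on $\pi_1$ and isomorphisms on $\pi_i$ for $i\ge 2$, and a strong deformation retraction is a homotopy equivalence. Once those are invoked, the whole problem reduces to reading off the map $(\rho|_{S_\ep})_*^i$ from the long exact homotopy sequence (\ref{longfiber}) of the sphere bundle $S^{m-n-1}\to S_\ep\to g(N)$, and then composing.

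For part (a), where $m-n=1$, Theorem \ref{completeness}(\ref{covering2}) gives that $\rho\circ f:\Si\to g(N)$ is a covering map. Because $\rho$ is a strong deformation retraction, $\rho_*^i$ is an isomorphism for every $i$. Writing $f_*^i=(\rho_*^i)^{-1}\circ(\rho\circ f)_*^i$ immediately transfers the covering-map behavior of $\rho\circ f$ to $f$, yielding a monomorphism at $i=1$ and an isomorphism at $i\ge 2$.

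For parts (b) and (c), where $m-n\ge 2$, I would start from (\ref{iota}): $f_*^i=\iota_*^i\circ(\rho|_{S_\ep})_*^i\circ F_*^i$. The factor $\iota_*^i$ is always an isomorphism, and by Theorem \ref{completeness}(\ref{mn>2}) the map $F$ is a covering, so $F_*^1$ is a monomorphism and $F_*^i$ is an isomorphism for $i\ge 2$. In case (b), the fiber is $S^1$ with $\pi_i(S^1)=0$ for $i\ge 2$; exactness of (\ref{longfiber}) then makes $(\rho|_{S_\ep})_*^i$ a monomorphism for $i=2$ and an isomorphism for $i\ge 3$, and composing yields (b). In case (c), the fiber $S^{m-n-1}$ is simply connected with trivial $\pi_i$ in degrees $0\le i\le m-n-2$; exactness gives $(\rho|_{S_\ep})_*^i$ an isomorphism for $1\le i\le m-n-2$ and an epimorphism for $i=m-n-1$, and composing yields (c).

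The only delicate step is the bookkeeping at the boundary indices. The non-vanishing group $\pi_{m-n-1}(S^{m-n-1})=\mathbb Z$ is precisely what prevents $(\rho|_{S_\ep})_*^{m-n-1}$ from being injective in case (c), so only surjectivity survives at that index; similarly $\pi_1(S^1)=\mathbb Z$ blocks surjectivity of $(\rho|_{S_\ep})_*^2$ in case (b), matching the fact that only injectivity is claimed there. Apart from keeping these indices straight, the argument is a short diagram chase through two factorizations and one exact sequence.
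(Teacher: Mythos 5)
Your proposal is correct and follows exactly the route the paper intends: it leaves the proof implicit after displaying the factorization (\ref{iota}), the fiber sequence (\ref{longfiber}), and the fact that $F$ is a covering map, and you fill in precisely that diagram chase. Your handling of case (a) via $f_*^i=(\rho_*^i)^{-1}\circ(\rho\circ f)_*^i$, and your index bookkeeping at $i=m-n-1$ (loss of injectivity from $\pi_{m-n-1}(S^{m-n-1})=\Bbb Z$) and at $i=2$ in codimension two (loss of surjectivity from $\pi_1(S^1)=\Bbb Z$), match what the long exact sequence gives.
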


To prove Theorem \ref{completeness} we will need to use the following result which follows  from
the proof of Proposition 4.1 in \cite{mz}:

\begin{proposition} Let $M$ be a Riemannian manifold and $g:N\to M$
an immersion. Assume that for any point $p\in M$ there exists $q\in N$ such that the distance
$d(p,g(N))=d(p,g(q))$. Assume further that $\exp^\perp$ is defined on all $\m N_g$. Then $M$ is complete.
\end{proposition}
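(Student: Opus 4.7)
The plan is to prove completeness of $M$ via the Cauchy-sequence characterization of the Hopf--Rinow theorem: I will show that every Cauchy sequence $\{p_n\} \subset M$ converges. The strategy is to use the two hypotheses to ``lift'' each $p_n$ to $\m N_g$ through the normal exponential, then extract a convergent subsequence in $\m N_g$ and push the limit back down under $\exp^\perp$.

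First, the distance-attainment hypothesis provides, for each $n$, some $q_n \in N$ with $r_n := d(p_n, g(q_n)) = d(p_n, g(N))$, and since $p \mapsto d(p, g(N))$ is $1$-Lipschitz on $M$, $\{r_n\}$ is Cauchy and converges to some $r \ge 0$. Next I would produce a unit-speed minimizing geodesic $\sigma_n : [0, r_n] \to M$ from $g(q_n)$ to $p_n$, via a standard broken-geodesic / Arzel\`a--Ascoli argument on a length-minimizing sequence of smooth curves joining $g(q_n)$ to $p_n$ and using only the local geodesic structure of $M$. The first variation of arc length applied to variations of $\sigma_n$ that slide its initial point along $g(N)$ forces $\sigma_n'(0) \perp dg(T_{q_n} N)$, giving the lift
\begin{equation*}
p_n = \exp^\perp(q_n, r_n v_n), \qquad v_n \in (\m N_g^1)_{q_n}.
\end{equation*}

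The proof would then conclude by extracting a convergent subsequence of $(q_n, r_n v_n) \in \m N_g$ and applying the continuity of $\exp^\perp$ (which is defined on all of $\m N_g$ by the second hypothesis) to get $p_n \to \exp^\perp(q_\infty, r v_\infty) \in M$. This last step is the main obstacle: since $N$ is not assumed to be complete, a priori the footpoints $q_n$ might not have any convergent subsequence in $N$, and the unit normal vectors $v_n$ live in a non-compact bundle. The resolution --- which is essentially the content of the proof of Proposition~4.1 in \cite{mz} --- exploits the minimizing property of the $\sigma_n$: along each such minimizing geodesic there are no focal points of $g$, so $\exp^\perp$ is a local diffeomorphism near each $(q_n, r_n v_n)$ with quantitative estimates depending only on the bounded lengths $r_n$. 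Combined with the fact that $\{p_n\}$ lies in a bounded region of $M$, this confines the lifts $(q_n, r_n v_n)$ to a relatively compact subset of $\m N_g^1$, from which a convergent subsequence is extracted and the argument closes.
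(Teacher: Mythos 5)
The paper itself gives no proof of this proposition---it is stated as a consequence of "the proof of Proposition~4.1 in \cite{mz}"---so there is nothing internal to compare against, and your proposal must be judged on its own. It has two genuine gaps. The first is the claim that the minimizing geodesic $\sigma_n$ from $g(q_n)$ to $p_n$ exists ``via a standard broken-geodesic / Arzel\`a--Ascoli argument \dots using only the local geodesic structure of $M$.'' In an incomplete manifold this is false as stated: a length-minimizing sequence of curves joining two fixed points need not stay in any compact set, which is exactly the kind of failure one is trying to rule out (think of $\real^2\setminus\{(1,0)\}$ with endpoints $(0,0)$ and $(2,0)$). The hypothesis that $\exp^\perp$ is defined on all of $\m N_g$ is precisely what must rescue this step, and your sketch never uses it there. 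The correct mechanism is a Hopf--Rinow-style continuation: pick a small geodesically convex ball around $g(q_n)$, find $y$ on its boundary sphere with $d(g(q_n),y)+d(y,p_n)=d(g(q_n),p_n)$, note that the short geodesic from $g(q_n)$ to $y$ must leave $g(N)$ orthogonally (first variation, since $d(y,g(N))$ is realized at $g(q_n)$), extend that geodesic to $\eta:[0,\infty)\to M$ using the $\exp^\perp$-completeness hypothesis, and then show $\{t\in[\ep,r_n]: d(\eta(t),p_n)=r_n-t\}$ is nonempty, open and closed, forcing $\eta(r_n)=p_n$.

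The second gap is the final compactness step. You assert that the ``no focal points along minimizing normal geodesics'' observation plus ``quantitative estimates'' confines the lifts $(q_n,r_n v_n)$ to a relatively compact subset of $\m N_g$, but no such estimate is produced, and none is obvious: $N$ is not assumed complete or compact, and nothing you have written rules out the footpoints $q_n$ escaping in $N$ while $p_n$ remains bounded in $M$ (the hypothesis only says \emph{some} foot point realizes the distance, not that all near-minimizers stay in a fixed compact set). You correctly flag this as the main obstacle, but then resolve it only by deferring to \cite{mz}, so the argument does not close. In short, the overall strategy (Cauchy sequence, lift along minimizing normal geodesics, pass to the limit) is a reasonable plan, but both the existence of the lift and the convergence of the lifts require the $\exp^\perp$-completeness hypothesis in an essential way that the proposal does not supply.
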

\begin{corollary} \label{mendzhou}
Let $g:N\to M$ be an embedding such that $\exp^\perp: \m N_g\to M$ is a diffeomorphism. If $N$ is compact then $M$ is complete.
\end{corollary}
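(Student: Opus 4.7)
The plan is to apply the Proposition stated immediately before the Corollary. Its two hypotheses are: (i) $\exp^\perp$ is defined on all of $\m N_g$, and (ii) for every $p\in M$ there exists $q\in N$ with $d(p,g(N))=d(p,g(q))$. Hypothesis (i) is immediate from the assumption that $\exp^\perp:\m N_g\to M$ is a diffeomorphism (in particular, it is defined everywhere on the total space of the normal bundle). So the only thing to verify is (ii).

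For (ii), I would use compactness of $N$. Since $g:N\to M$ is an embedding and $N$ is compact, the image $g(N)\subset M$ is a compact subset. For any fixed $p\in M$, the function $q\mapsto d(p,g(q))$ is continuous on the compact space $N$, so it attains its infimum at some $q_0\in N$. By definition of the distance from a point to a set, this infimum equals $d(p,g(N))$, giving the desired $q_0$.

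With both hypotheses verified, the Proposition yields that $M$ is complete, which is exactly the conclusion of the Corollary. The argument is essentially one line once the Proposition is invoked; there is no real obstacle, since compactness of $g(N)$ is precisely the standard tool that guarantees existence of foot points, and the diffeomorphism assumption trivially handles the domain-of-definition hypothesis. No use of the totally geodesic hypersurface or of focal point theory is needed here.
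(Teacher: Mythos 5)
Your proposal is correct and is exactly the intended argument: the paper states the Corollary immediately after the Proposition without writing out a proof, treating it as immediate in precisely the way you describe. Verifying that $\exp^\perp$ is globally defined (trivial from being a diffeomorphism) and that foot points exist (from compactness of $g(N)$ and continuity of $q\mapsto d(p,g(q))$) is all that is needed.
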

\begin{proof} [Proof of Theorem \ref{completeness}]
We first prove that $M$ is complete and $N$ is compact. By Corollary \ref{mendzhou} it suffices
to prove that $N$ is compact.
We first consider the case $m-n=1$. By Lemma \ref{FG}-\ref{G} the map $G:\Si\to g(N)$, given
by $G=\rho\circ f$ is a local diffeomorphism. By compactness of $\Si$ and connectedness of $g(N)$ we have that $G(\Si)=g(N)$, hence $N$ is compact since
$g$ is an embedding.
Now we consider the case that $m-n\ge 2$.  By Lemma \ref{intersect} we obtain that $f(\Si)\cap g(N)=
\emptyset$. Thus by Lemma \ref{FG}-\ref{F} the map $F:\Si\to S_\ep$ given by $F(p)=\g_{f(p)}(\ep)$ is a local diffeomorphism. From the connectedness of $S_\ep$ and the compactness of $\Si$ we conclude that $F(\Si)=S_\ep$.
This implies that $N$ is compact since $S_\ep=F(\Si)$ is a bundle over $N$ with 
compact fibers.

To prove Item \ref{covering2} we assume that $m-n=1$. We obtain that $G=\rho\circ f :\Si\to g(N)$ is a covering map  from the following facts: the map $G$ is a  local diffeomorphism; the manifold $\Si$ is compact; $N$ is connected. Item \ref{covering2} is proved.

From now on we assume that $m-n\ge 2$. We already saw that $F:\Si\to S_\ep$ is a local diffeomorphism onto $S_\ep$, which is diffeomorphic to $\m N_g^1$. From the
compactness of $\Si$ and the connectedness of $S_\ep$ it follows that $F$ is a covering map. 
From Lemma \ref{intersect} we have that $f(\Si)\cap g(N)=\emptyset$. 

In order to prove Items \ref{mn>2}-\ref{embedding} and \ref{mn>2}-\ref{disjointunionsplit} assume that $f$ is an embedding. By Claim \ref{Fdiffeo} in the proof of Theorem \ref{gth} we have that $F$ is a diffeomorphism onto
its image $F(\Si)=S_\ep$. Now we use Item (\ref{isotopy}) of Theorem \ref{gth}, by taking account that 
$\rho^{-1}(\rho(f(\Si))=\rho^{-1}(g(N))=M$ to conclude that $F:\Si\to S_\ep$ is a diffeomorphism via an ambient isotopy. 
Item \ref{mn>2}-\ref{embedding} is proved. We use again Theorem \ref{gth} obtaining that $M-f(\Si)=A\sqcup B$ where the closure $\bar A$ is a tubular neighborhood of $g(N)$ with boundary $f(\Si)$, and $\bar B$ is an unbounded manifold with boundary $f(\Si)$. Since $f(\Si)$ and $g(N)$ are compact we conclude that $\bar A$ is compact.
Item \ref{mn>2}-\ref{disjointunionsplit} is proved.
The proof of Theorem \ref{completeness} is complete.
\end{proof}

Theorem \ref{tgmth2} follows from Remark \ref{bolton},
Lemma \ref{orthogon}, Theorem \ref{completeness} and the following
\begin{proposition} \label{12linha} Let $f:\Si^{m-1}\to M^m$ be an immersion of a closed manifold $\Si$ in a complete simply-connected Riemannian manifold $M$ and $g:N^n\to M^m$ a complete isometric embedding  without focal points. Assume that the unit speed geodesics of $M$ tangent to $f(\Si)$ do not intersect $g(N)$ orthogonally. Then $N$ is compact and the following conclusions hold:
\begin{enumerate}[(1)]
\item\label{m-nigual1}  if $m-n=1$ then $\Si$ is diffeomorphic to $N$, $f$ is an embedding and a homotopy equivalence;
\item\label{m-nigual2} if $m-n=2$ then $\pi_1(\Si)$ is cyclic;
\item\label{m-n>=3} if $m-n\geq 3$ then $f$ is an embedding, $\Si$ is simply-connected, and $F:f(\Si)\to S_\ep$ 
is a diffeomorphism via a natural ambient isotopy.
\end{enumerate}
\end{proposition}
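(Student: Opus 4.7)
The strategy is to reduce everything to Theorem \ref{completeness} and the long exact sequence of the fibration $S^{m-n-1}\to S_\ep\to g(N)$, using the simply-connectedness of $M$ to get strong information on $N$ via Remark \ref{bolton}. First, since $M$ is simply-connected, Remark \ref{bolton} gives that $\exp^\perp:\m N_g\to M$ is a diffeomorphism, $g$ is an embedding, $N$ is simply-connected, and $S_\ep(g(N))\cong \m N_g^1$. The hypothesis on tangent geodesics is exactly what is needed to invoke Theorem \ref{completeness}, which yields $N$ compact plus the three structural conclusions listed there.

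Case $m-n=1$: Theorem \ref{completeness}(\ref{covering2}) gives that $G=\rho\circ f:\Si\to g(N)$ is a covering map. Since $N$ is simply-connected and connected, $G$ must be a diffeomorphism, so in particular $\Si$ is diffeomorphic to $N$. From $f(p_1)=f(p_2)\Rightarrow G(p_1)=G(p_2)\Rightarrow p_1=p_2$, we see $f$ is injective, so as an injective immersion of a closed manifold it is an embedding. For the homotopy equivalence, observe that $f\simeq \iota\circ\rho\circ f=\iota\circ G$ where $\iota:g(N)\hookrightarrow M$ is the inclusion; since $\iota$ is a homotopy equivalence (with homotopy inverse $\rho$) and $G$ is a diffeomorphism, $f$ is a homotopy equivalence.

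Case $m-n=2$: Theorem \ref{completeness}(\ref{mn>2}) gives that $F:\Si\to S_\ep$ is a covering map. In the fiber bundle $S^1\to S_\ep\to g(N)$, the tail of the long exact sequence reads
\begin{equation*}
\pi_1(S^1)\longrightarrow \pi_1(S_\ep)\longrightarrow \pi_1(g(N))=0,
\end{equation*}
so $\pi_1(S_\ep)$ is a quotient of $\Bbb Z$ and hence cyclic. Since $F$ is a covering, $F_*^1:\pi_1(\Si)\to\pi_1(S_\ep)$ is injective, exhibiting $\pi_1(\Si)$ as a subgroup of a cyclic group; therefore $\pi_1(\Si)$ is cyclic.

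Case $m-n\ge 3$: Now the fiber $S^{m-n-1}$ is simply-connected, so the long exact sequence of $S^{m-n-1}\to S_\ep\to g(N)$ yields $\pi_1(S_\ep)=0$ since both $\pi_1(S^{m-n-1})$ and $\pi_1(g(N))$ vanish. A covering $F:\Si\to S_\ep$ onto a simply-connected base with connected total space must be a diffeomorphism, so $\Si$ is simply-connected. As in the first case, injectivity of $F$ forces injectivity of $f$, hence $f$ is an embedding. The ambient isotopy statement then follows directly from Theorem \ref{completeness}(\ref{mn>2})(\ref{embedding}). The only genuinely delicate step in the whole argument is this passage from ``$F$ is a covering'' to ``$F$ is a diffeomorphism,'' which in each of cases 1 and 3 is what converts the information about $N$ provided by Remark \ref{bolton} into rigidity for $f$ itself.
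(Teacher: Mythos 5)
Your proposal is correct and follows essentially the same route as the paper: invoke Remark \ref{bolton} to get that $\exp^\perp$ is a diffeomorphism and $N$ is simply-connected, reduce to the covering maps $G=\rho\circ f$ and $F$ furnished by Theorem \ref{completeness}, and read off the three cases from the long exact sequence of $S^{m-n-1}\to S_\ep\to g(N)$. The only cosmetic difference is that you spell out the step ``injectivity of $G$ (resp.\ $F$) forces injectivity of $f$, hence $f$ is an embedding,'' which the paper treats as immediate.
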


\begin{proof}   From Remark \ref{bolton} we have that $\exp^\perp:\m N_g\to M$ is a diffeomorphism,
hence $N$ is simply-connected. 

To prove Item \ref{m-nigual1} we assume that $m-n=1$ and consider again the map $G=\rho\circ f:\Si\to g(N)$. We have from Theorem \ref{completeness} and 
the simply connectedness of $N$  that $G$ is a diffeomorphism onto $g(N)$, hence $f$ is an embedding. Trivially we have that $G$ is homotopic to $f$. The embedding $G$ 
is obviously a homotopy equivalence because $M$ is a vector bundle
over $g(N)$.
 Item \ref{m-nigual1} is proved.

Now assume that $m-n=2$. Since $g(N)$ is simply connected the long exact sequence (\ref{longfiber}) implies 
that $\pi_1(S_\ep)$ is cyclic. Since $F:\Si\to S_\ep$ is a covering map we have the $F_*^1$ injects 
$\pi_1(\Si)$ into  $\pi_1(S_\ep)$, hence it is also cyclic. Item \ref{m-nigual2} is proved. 

If $m-n\ge 3$ the same exact sequence (\ref{longfiber})  implies that $S_\ep$ is simply connected, hence $F$ is 
a diffeomorphism via a natural isotopy on $M$ (see Theorem \ref{gth}, Item \ref{isotopy}), hence $\Si$ is simply connected and $f$ is an embedding. 
 Proposition \ref{12linha} is proved.
\end{proof}

To prove the next theorem we will need the following topological lemma. Since we didn't find
it in the literature, we will present its simple proof for the sake of completeness.

\begin{lemma} \label{topology} Let $M$ be a connected metric space, and $A, B$ subsets of $M$ with closures and boundaries connected. Assume that $\bar A\cap B\not=\emptyset$, $A\cap \bar B\not=\emptyset$, $\p A\cap\p B=\emptyset$, $\bar A\not\subset B$ and $\bar B\not\subset A$. Then $M=\mbox{int}(A)\cup \mbox{int}(B)$, where $\mbox{int}(A)$ denotes the interior of $A$.
\end{lemma}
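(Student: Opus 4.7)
The plan is to establish the two boundary inclusions
\begin{equation*}
\p A\subset\mathrm{int}(B)\quad\text{and}\quad \p B\subset\mathrm{int}(A).
\end{equation*}
Once both hold, the identity $\bar A=\mathrm{int}(A)\cup\p A$ (and its analogue for $B$) yields $\bar A\cup \bar B=\mathrm{int}(A)\cup\mathrm{int}(B)$. This common set is simultaneously closed (a union of two closures) and open (a union of two interiors) in $M$, and it is nonempty since $\bar A\ne\emptyset$, so the connectedness of $M$ forces it to equal $M$, which is exactly the desired conclusion.

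To prove $\p A\subset\mathrm{int}(B)$ I would first observe that $M\setminus \p B=\mathrm{int}(B)\sqcup\mathrm{int}(M\setminus B)$ is a disjoint union of two open sets, and the hypothesis $\p A\cap \p B=\emptyset$ places the connected set $\p A$ inside their union; by connectedness $\p A$ lies entirely in one of them. The task is therefore to rule out $\p A\subset\mathrm{int}(M\setminus B)$, equivalently $\p A\cap \bar B=\emptyset$. Supposing this held, I would write
\begin{equation*}
\bar B=(\bar B\cap \bar A)\cup\bigl(\bar B\cap\overline{M\setminus A}\bigr),
\end{equation*}
a union of two closed subsets of $\bar B$ whose intersection $\bar B\cap \p A$ is empty by our assumption. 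The connectedness of $\bar B$ then forces one of the two pieces to be empty: if $\bar B\cap\overline{M\setminus A}=\emptyset$ then $\bar B\subset\mathrm{int}(A)\subset A$, contradicting $\bar B\not\subset A$; if $\bar B\cap \bar A=\emptyset$ then $A\cap \bar B=\emptyset$, contradicting $A\cap \bar B\ne\emptyset$. Either alternative is blocked, so $\p A\cap \bar B\ne\emptyset$, and hence $\p A\subset\mathrm{int}(B)$.

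The inclusion $\p B\subset\mathrm{int}(A)$ then follows by a symmetric argument with the roles of $A$ and $B$ exchanged, using $\bar A\not\subset B$, $\bar A\cap B\ne\emptyset$ and the connectedness of $\bar A$ in place of their counterparts. The main conceptual step, and the one I would take care to set up cleanly, is the middle reduction: the connectedness of the closures $\bar A$ and $\bar B$ is the tool that upgrades the ``small'' separation hypothesis $\p A\cap\p B=\emptyset$ into the ``large'' containments of entire boundaries inside interiors, and the four hypotheses $\bar A\cap B\ne\emptyset$, $A\cap \bar B\ne\emptyset$, $\bar A\not\subset B$, $\bar B\not\subset A$ are precisely what is needed to block the two contradictory alternatives produced by that connectedness.
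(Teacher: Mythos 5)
Your proof is correct and follows essentially the same strategy as the paper: establish the two boundary inclusions $\p A\subset\mathrm{int}(B)$ and $\p B\subset\mathrm{int}(A)$, and then observe that $\bar A\cup\bar B=\mathrm{int}(A)\cup\mathrm{int}(B)$ is a nonempty clopen subset of the connected space $M$. The only difference is internal to the proof of each inclusion: the paper first uses connectedness of $\bar A$ to produce a point of $\bar A\cap\p B$ and then a clopen-subset argument in $\p B$, whereas you first use connectedness of $\p A$ to localize it to one side of $\p B$ and then connectedness of $\bar B$ to rule out the wrong side — the same two connectedness facts, applied in the opposite order.
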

\begin{proof} By hypothesis we have that $\bar A\cap B\not=\emptyset$
and $\bar A\cap (M-B)\not=\emptyset$.
By connectedness of $\bar A$ we conclude that $\bar A\cap\p
B\not=\emptyset$. Since $\p A\cap \p B=\emptyset$
we obtain that
\begin{equation*}\mbox{int}(A)\cap\p B=\bar A\cap\p B\not=\emptyset .
\end{equation*}
Thus the set
\ $\mbox{int}(A)\cap\p B$ is an open and
closed nonempty subset of the connected set $\p B$, hence we have that
$\mbox{int}(A)\cap\p B=\p B$.
Thus we obtain that $\p B\subset \mbox{int}(A)$. Similarly we can
prove that $\p A\subset \mbox{int}(B)$.
We conclude that $$\bar A\cup \bar B= (\mbox{int}(A)\cup\p
A)\cup(\mbox{int}(B)\cup\p B)\subset\mbox{int}(A)\cup
\mbox{int}(B).$$ Thus we obtain that $\bar A\cup \bar B=A\cup
B=\mbox{int}(A)\cup
\mbox{int}(B)$. By the connectedness of $M$ we conclude that $M=\mbox{int}(A)\cup \mbox{int}(B)$.
\end{proof}

Theorem \ref{tgmth} follows from the next result together with Theorem B and Lemma \ref{orthogon}.
Theorems \ref{completeness}, \ref{completeness2} and Lemma \ref{orthogon} 
together imply  Theorem
\ref{covering}.

\begin{theorem}\label{completeness2} Let $\Si$ be a closed manifold with finite fundamental group. Let $f:\Si^{m-1}\to M^m$ be an immersion in a Riemannian manifold $M$. Let $g:N\to M$ be an immersion such $\exp^\perp:\m N_g\to M$ is a covering map. Assume that the unit speed geodesics of $M$ tangent to $f(\Si)$ do not intersect $g(N)$ orthogonally. Then $M$ is complete,  $N$ is compact with finite fundamental group
and the following conditions hold:
\begin{enumerate}[(1)]
\item\label{Ncompactafinitalinha} if $m-n=1$ then $\Si$ and $N$ have diffeomorphic universal covers, $f_*^1$ is 
a monomorphism and any 
lifting $\ti f:\ti\Si\to \ti M$ is an embedding and a homotopy equivalence;
\item\label{it3linha} if $m-n\ge 2$ then it holds that:
\begin{enumerate}[(a)]
\item \label{intersection} $f(\Si)\cap g(N)=\emptyset$;
\item \label{Mnoncompact} $M$ is noncompact with finite fundamental group;
\item\label{diffcover} $\Si$ and $\m N_g^1$ have diffeomorphic universal covers;
\item\label{it5linha} the homomorphism $\iota_*^1:\pi_1(f(\Si))\to \pi_1(M)$, induced by the inclusion $\iota:f(\Si)\to M$, is surjective;
\item \label{mndois} if $m-n=2$ then $M$ and $N$ have infinite second homotopy groups, $f_*^2:\pi_2(\Si)\to \pi_2(M)$ is 
a monomorphism and 
the quotient group $\pi_2(M)/\pi_2(\Si)=\Bbb Z$;
\end{enumerate}
\item\label{m-n3linha} if $m-n\geq 3$ then $f_*^1$ is a monomorphism, any 
lifting $\ti f:\ti\Si\to \ti M$ is an embedding and $\ti f(\ti\Si)$ is diffeomorphic to $S_\ep(\ti g(\ti N))$ via a natural 
ambient isotopy, hence $\ti f$ is $(m-n-1)-$connected.
  \end{enumerate}
\end{theorem}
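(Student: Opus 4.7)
The plan is to reduce Theorem \ref{completeness2} to Theorem \ref{completeness} applied at two successive covering levels. Let $\nu:\hat\Si\to\Si$ denote the universal cover, which is compact since $\pi_1(\Si)$ is finite. Endowing $\m N_g$ with the metric pulled back by the local isometric covering $\exp^\perp_g$ and proceeding as in Lemma \ref{orthogon}(b), lift $f\circ\nu$ to $\hat f:\hat\Si\to\m N_g$; for the zero section $\hat g(x)=(x,0)$, the map $\exp^\perp_{\hat g}:\m N_{\hat g}\to\m N_g$ is a diffeomorphism and the orthogonality hypothesis descends to the pair $(\hat f,\hat g)$ through $\exp^\perp_g$. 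Theorem \ref{completeness} applied in $\m N_g$ then yields that $\m N_g$ is complete (whence $M$ is complete) and that $N$ is compact. Next, the universal cover $\pi:\ti M\to M$ factors through $\exp^\perp_g$, and $\ti M$ is simply connected as the pullback vector bundle $\m N_g\times_N\ti N$ of rank $m-n$ over the universal cover $\ti N$ of $N$. Writing $\ti g$ for the zero section, $\exp^\perp_{\ti g}$ is a diffeomorphism, and $\hat f$ further lifts to $\ti f:\hat\Si\to\ti M$. Both pairs $(\hat f,\hat g)$ in $\m N_g$ and $(\ti f,\ti g)$ in $\ti M$ then satisfy the hypotheses of Theorem \ref{completeness}.

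For Item (\ref{Ncompactafinitalinha}) with $m-n=1$, Theorem \ref{completeness}(\ref{covering2}) yields a covering $\ti G=\ti\rho\circ\ti f:\hat\Si\to\ti N$ between simply connected manifolds, hence a diffeomorphism, so $\Si$ and $N$ share a diffeomorphic universal cover; moreover $\ti f$ is a section of the line bundle $\ti M\to\ti N$, therefore an embedding and a homotopy equivalence. Monomorphism of $f^1_*$ follows by the standard deck-lift argument: any $\alpha\in\ker f^1_*$ makes $\ti f\circ\gamma$ a loop in $\ti M$, where $\gamma$ lifts $\alpha$ from $\hat s_0$ to $\sigma\hat s_0$ in $\hat\Si$, and injectivity of $\ti f$ forces $\sigma=\id$. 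For $m-n\ge 2$, (\ref{intersection}) is Lemma \ref{intersect}; Theorem \ref{completeness}(\ref{mn>2}) applied to $(\hat f,\hat g)$ produces a covering $\hat F:\hat\Si\to\hat S_\ep\cong\m N_g^1$ which, since $\hat\Si$ is simply connected, is the universal cover of $\m N_g^1$, giving (\ref{diffcover}); applied to $(\ti f,\ti g)$ it produces a covering $\ti F:\hat\Si\to\ti S_\ep$ with compact source, so $\ti S_\ep$ is compact, and as a sphere bundle over $\ti N$ it forces $\ti N$ compact and $\pi_1(N)$ finite. Item (\ref{mndois}) for $m-n=2$ follows from the fiber-bundle sequence in Equation (\ref{longfiber}) applied to $S^1\to\hat S_\ep\to N$, using the covering-induced isomorphisms $\hat F_*^2$, $\nu_*^2$ and $(\exp^\perp_g)_*^2$ on $\pi_2$, together with the finiteness of $\pi_1(\hat S_\ep)$ to conclude that the boundary $\pi_2(N)\to\mathbb Z$ has image a finite-index subgroup, hence cokernel $\cong\mathbb Z$. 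For Item (\ref{m-n3linha}) with $m-n\ge 3$, the analogous sequence with $\pi_1(S^{m-n-1})=0$ and $\pi_1(\ti N)=0$ gives $\pi_1(\ti S_\ep)=0$, so $\ti F$ is a finite-sheeted covering between simply connected manifolds and therefore a diffeomorphism; $\ti f$ is consequently an injective immersion from a compact source, hence an embedding, and Theorem \ref{completeness}(\ref{mn>2})(\ref{embedding}) supplies the ambient isotopy and $(m-n-1)$-connectedness. Monomorphism of $f^1_*$ again follows from the deck-lift argument.

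The main obstacle is the remainder of Item (\ref{it3linha}), namely (\ref{Mnoncompact}) and (\ref{it5linha}). Since $\pi_1(N)$ is finite, it suffices for (\ref{Mnoncompact}) to bound $[\pi_1(M):g_*\pi_1(N)]$, the number of sheets of the intermediate covering $\m N_g\to M$. The plan is to exploit the compact tubular neighborhood $\bar{\ti A}$ of $\ti g(\ti N)$ supplied by Theorem \ref{completeness}(\ref{disjointunionsplit}) applied to $(\ti f,\ti g)$: proper discontinuity of the deck action of $\pi_1(M)$ on $\ti M$ forces $\{\sigma:\sigma\bar{\ti A}\cap\bar{\ti A}\neq\emptyset\}$ to be finite, and combining this with the decomposition $\ti M=\bar{\ti A}\sqcup\ti B$ and an application of Lemma \ref{topology} to analyze how each translate $\sigma\bar{\ti A}$ sits relative to $\ti B$ is expected to force the entire $\pi_1(M)$-orbit of $\bar{\ti A}$ to be finite. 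Once $\pi_1(M)$ is finite, the finite cover $\ti M\to M$ has noncompact total space (a vector bundle of positive rank over the compact base $\ti N$), so $M$ is noncompact. For (\ref{it5linha}), surjectivity of $\iota^1_*$ should follow from path-connectedness of $\pi^{-1}(f(\Si))=\bigcup_{\sigma\in\pi_1(M)}\sigma\cdot\ti f(\hat\Si)$: each deck transformation $\sigma$ is realized by a path from $\ti f(\hat s_0)$ to $\sigma\cdot\ti f(\hat s_0)$ inside this preimage, which projects to a loop in $f(\Si)$ homotopic in $M$ to the loop representing $\sigma$.
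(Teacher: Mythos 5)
The setup, the reduction to Theorem \ref{completeness} at the universal-cover level, and Items (\ref{Ncompactafinitalinha}), (\ref{intersection}), (\ref{diffcover}), (\ref{mndois}), (\ref{m-n3linha}) are all carried out correctly and follow essentially the same path as the paper. However, there is a genuine gap in the treatment of (\ref{Mnoncompact}) and (\ref{it5linha}), which you yourself flag as ``the main obstacle'' and leave as a plan rather than an argument.

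For (\ref{Mnoncompact}) your proposal invokes the compact set $\bar{\ti A}$ from Theorem \ref{completeness}(\ref{mn>2})(\ref{disjointunionsplit}), but that item is only available when $\ti f$ is an embedding; for $m-n=2$ you never establish this, and Example \ref{examplebundle} shows $\ti f$ can fail to be injective in codimension two. The paper circumvents this by replacing $\bar A$ with the set $W_{\ti f,\ti g}$ built from the \emph{last} transversal intersection time $T_{z,\ti g}$ of each normal geodesic with $\ti f(\hat\Si)$, a construction that works when $F$ is merely a covering and $\ti f$ merely an immersion. Moreover, even in codimension $\ge 3$ where $\ti f$ is an embedding, proper discontinuity alone only shows that the set $\{\sigma:\sigma\bar{\ti A}\cap\bar{\ti A}\neq\emptyset\}$ is finite; it does not show this set equals all of $\pi_1(M)$, which is what you need. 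The paper's actual key step is the inclusion $\mr g(\ti N)\subset W_{\ti f,\ti g}$ for \emph{every} lift $\mr g$ of $g$, proved by showing $\partial(w_{\ti f,\mr g})=\partial(W_{\ti f,\ti g})$ and deriving a contradiction with the noncompactness of $\ti M$; once this inclusion holds, $P^{-1}(\{g(q)\})$ sits inside the compact $W_{\ti f,\ti g}$ and is discrete, hence finite. Nothing in your sketch supplies this inclusion, and Lemma \ref{topology} cannot be applied until one already knows the relevant sets intersect.

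For (\ref{it5linha}) you assert that path-connectedness of $P^{-1}(f(\Si))$ ``should follow,'' but give no argument. This is again the hard part: the paper proves it by showing, for every $\al\in\operatorname{Aut}(P)$, that $\partial(W_{\ti f,\ti g})\cap\partial(W_{\al\circ\ti f,\,\al\circ\ti g})\neq\emptyset$, using the inclusion above together with Lemma \ref{topology} and a volume comparison $\operatorname{vol}(W_{\ti f,\ti g})=\operatorname{vol}(\al(W_{\ti f,\ti g}))$ to rule out strict nesting of the translated tubular neighborhoods. Without an argument of this type, the surjectivity of $\iota_*^1$ remains unproved.
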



\begin{proof} We will first prove that $M$ is complete, and $N$ is compact with finite fundamental group.

\begin{equation}\label{lifting}
\xymatrix{
\ti\Si\ar[rrr]^{\tilde f}\ar[dd]^{\nu} &&& \ti M=\m N_{\breve g}\ar[d]^{\varphi}\ar@/_1.7pc/[dd]_P &&& \tilde{N}\ar[lll]_{\ti g}\ar[dd]^{\mu}\ar[llldd]_{\breve g} \\
                            &&&  \m N_g\ar[d]^{\exp^\perp_g}   &&&  \\
            \Si\ar[rrr]^f &&&  M                           &&&  N\ar[lll]_g}\\
\end{equation}

In fact, let $\mu:\tilde N\to N$ be the universal covering and consider the map $\breve g=g\circ\mu$. For $\ti z\in \ti N$ and $z=\mu(\ti z)$, set $W_z=(dg_z(T_zN))^\perp$ and $W_{\ti z}=(d\breve g_{\ti z}(T_{\ti z}\ti N))^\perp$. It is easy to see that $W_z=W_{\ti z}$ 
and that 
$\varphi:\m N_{\breve g}\to \m N_g$ given by $\varphi(\ti z, v)=(\mu(\ti z),v)$ is a covering map. The manifold  $\ti M=\m N_{\breve g}$ is simply-connected since it is strongly deformation
retracted to $\tilde N\times\{0\}$. Thus $P=\exp^\perp_g\circ\varphi:\ti M\to M$ is the universal covering of $M$. Consider
on $\ti M$ and on $\ti N$ the induced metrics by $P$ and $\mu$, respectively.

Set $\ti g:\ti N\to \ti M$ given by $\ti g(\ti z)=(\ti z,0)$. Notice that $(P\circ \ti g)(\ti z)=
\exp_g^\perp(\varphi(\ti z,0))=\exp_g^\perp(z,0)=g(z)=(g\circ\mu)(\ti z)=\breve g(\ti z)$, hence $\ti g$ is a
lifting of $\breve g$ and $g$. Set $\ti\g:[0,+\infty)\to \ti M$ given by
$\ti\g(t)=(\ti z, tv)$, where $v\in W_{\ti z}=W_z$. Set $\g=P\circ \ti\g$. Notice that $\g(t)=\exp^\perp_g(\mu(\ti z),tv)=\exp^\perp_g(z,tv)$, hence
$\g$ is the geodesic on $M$ with $\g(0)=g(z)$ and $\g'(0)=v$. Furthermore we have that $P(\ti N\times\{0\})=\exp^\perp_g(\mu(\ti N)\times\{0\})=g(N)$. Since $\g=P\circ\ti\g$ is orthogonal to $g(N)$ and
$P$ is a local isometry, we have that
$\ti \g$ is a geodesic on $\ti M$ which is orthogonal to $\ti N\times\{0\}$. Set $w=\ti\g'(0)$.
We have $v=\g'(0)=dP_{(\ti z, 0)}\ti \g'(0)=dP_{(\ti z, 0)}w$. Then it holds that 
\begin{equation}\label{equationbreve}\exp^\perp_{\ti g}(\ti z, w)=\ti\g(1)=(\ti z, v)=(\ti z, dP_{(\ti z, 0)}w),
\end{equation}
hence $\exp^\perp_{\ti g}$ is bijective. Since
by hypothesis $\exp^\perp$ is a covering map, we have that $g$ is free of focal points and hence so is $\ti g$. Thus the map $\exp^\perp_{\ti g}:\m N_{\ti g}\to \ti M$ is a local diffeomorphism.  Since it is also bijective we conclude that $\exp^\perp_{\ti g}$ is a diffeomorphism.

Consider the universal covering $\nu:\tilde\Si\to
\Si$ with induced metric. The map $f$ admits a lifting $\ti f: \ti \Si \to \ti M$.
Since $\Si$ is compact with finite fundamental group we have that $\ti \Si$ is compact.
Since we are using induced metrics we have that unit speed geodesics tangent to $\ti f(\ti \Si)$ do not intersect $\ti g(\ti N)$ orthogonally. Since $\exp^\perp_{\ti g}:\m N_{\ti g}\to\ti M$ is a diffeomorphism,
Theorem \ref{completeness} applies for $\ti f:\ti \Si\to \ti M$ and $\ti g:\ti N\to \ti M$. Thus we obtain that $\ti M$ is complete and
$\ti N$ is compact. Since $P$ and $\mu$ are locally isometric covering maps we conclude that $M$ is complete and $N$ is compact with finite fundamental group.

 Now we will prove that $f_*^1$ is a monomorphism if $m-n\not= 2$. Under this codimension hypothesis Proposition  \ref{12linha} says that $\ti f$ is an embedding. We take a continuous closed curve $\al:[0,1]\to \Si$ such that
$\be=f\circ\al$ is trivial in $\pi_1(M)$. Let $\ti \al:[0,1]\to\ti\Si$ be a lifting of $\al$.
Consider the curve $\ti \be=\ti f\circ\ti\al:[0,1]\to \ti M$. Notice that $P\circ\ti\be=
(P\circ \ti f)\circ\ti\al=f\circ(\nu\circ\ti\al)=f\circ\al=\be$, hence $\ti \be$ is a lifting
of $\be$. Since $\be$ is trivial in $\pi_1(M)$ it follows that $\ti \be:[0,1]\to \ti M$ is
a closed curve. From the equality $\ti \be=\ti f\circ\ti\al$ and the fact that $\ti f$ is injective we easily see that $\ti\al$ is a closed curve, hence
$\al$ is trivial in $\pi_1(\Si)$. This implies that $f_*^1$ is a monomorphism.
This fact and Proposition \ref{12linha} together imply Items \ref{Ncompactafinitalinha} and \ref{m-n3linha}.

From now on we assume that $m-n\ge 2$. Item \ref{it3linha}-\ref{intersection} follows directly from Lemma  \ref{intersect}. 
Now we will prove Item \ref{it3linha}-\ref{diffcover}. Since $\exp_{\ti g}^\perp:\m N_{\ti g}\to \ti M=\m N_{\breve g}$ 
is a diffeomorphism we have from (\ref{equationbreve}) that $\exp_{\ti g}^\perp:\m N_{\ti g}^1\to \m N_{\breve g}^1$ is a diffeomorphism. By using the covering map $\varphi:\m N_{\breve g}\to \m N_g$ given by 
$\varphi(\ti z, v)=(\mu(\ti z),v)$ we see that $\varphi|_{\m N_{\breve g}^1}:\m N_{\breve g}^1\to \m N_{g}^1$ is 
also a covering map, hence $\m N_{\ti g}^1$ covers $\m N_g^1$. From Theorem \ref{completeness} we know that 
$\ti \Si$ covers $\m N_{\ti g}^1$, hence it covers $\m N_g^1$. Item \ref{it3linha}-\ref{diffcover} is proved.

Now we begin the proof of Item \ref{it3linha}-\ref{Mnoncompact}.
Consider any liftings $\hat f:\ti\Si\to \ti M$ and $\hat g:\ti N\to \ti M$ of $f$ and $g$, respectively.
Since we are using induced metrics on $\ti M$ and $\ti N$ we have that $\hat g$ is an isometric immersion free of
focal points. We already proved that $\ti M$ is complete and $\ti N$ is compact, hence we
conclude by Remark \ref{bolton} that $\exp^\perp_{\hat g}:\m N_{\hat g}\to \ti M$ is a diffeomorphism. Thus for any
$x\in \ti M-\hat g(\ti N)$ there exists a unique unit speed geodesic $\ga_x=\g_{x,\hat g}:[0,\infty)\to \ti M$ that starts orthogonally from $\hat g(\ti N)$ with $\ga_x(d_x)=x$, where $d_x$ is the distance between $x$ and $\hat g(\ti N)$. Let $S_\ep=S_{\ep,\hat g}\subset \ti M$ be the boundary of the $\ep$-tubular neighborhood of $\hat g(\ti N)$. Since $\nu, P, \mu$ are locally isometric covering maps we have that the unit speed geodesics
tangent to $\hat f(\ti \Si)$ do not intersect $\hat g(\ti N)$ orthogonally. By Theorem \ref{completeness} we have that $\hat f(\ti \Si)\cap \hat g(\ti N)=\emptyset$ and the map
$F:\ti \Si\to S_\ep$ given by $F(p)=\ga_{\hat f(p)}(\ep)$ is a covering map. Thus, for all $z\in S_\ep$, the
geodesic $\ga_{z}$ only intersects $\hat f(\ti\Si)$ transversely and this occurs finitely many times. Let $t_{z,\hat g}$ be the time of the
first contact and $T_{z,\hat g}$ the time of the last contact between $\ga_{z}$ and $\hat f(\ti\Si)$.

We claim that $T_{z,\hat g}$ and $t_{z,\hat g}$ depend continuously on $z\in S_\ep$. In fact, since $F$ is
a covering map, for any $z_0\in S_\ep$, there
exists a neighborhood $V\subset S_\ep$ of $z_0$ such that $F^{-1}(V)$ is a disjoint union of
open sets $U_1,\cdots U_k$ satisfying that, for all $j$, the restriction $F|_{U_j}\to V$ is a diffeomorphism, hence $f|_{U_j}$ is
an embedding and, for any  $z\in V$, the geodesic $\ga_{z}$ intersects transversely $f(U_j)$ at a unique point $\ga_{z}(s_j(z))$.
By transversality we know that $s_j:V\to (0,+\infty)$ is a smooth function. Thus the maps $z\in V\mapsto
T_{z,\hat g}$ and $z\in V\mapsto t_{z,\hat g}$ are, respectively, maximum and minimum of
smooth functions,  hence we have that $T_{z,\hat g}$ and $t_{z,\hat g}$ depend continuously on $z$.

Define the sets
$$W_{\hat f,\hat g}=\{\ga_{z,\hat g}(t)\bigm|z\in S_\ep\mbox{ and }0\le t\le T_{z,\hat g}\},$$
and
$$w_{\hat f,\hat g}=\{\ga_{z,\hat g}(t)\bigm|z\in S_\ep\mbox{ and }0\le t\le t_{z,\hat g}\}.$$
Since $\exp^\perp_{\hat g}$ is a diffeomorphism, and $T_{z,\hat g}$ and $t_{z,\hat g}$ depend
continuously on $z$ we easily see that $W_{\hat f,\hat g}$ and $w_{\hat f,\hat g}$ are tubular neighborhoods of $\hat g(\ti N)$ with $C^0$-boundaries $\p (W_{\hat f,\hat g})$ and $\p (w_{\hat f,\hat g})$, respectively. Clearly we have that
\begin{equation} \label{boundarycontainedfsigma}
\p (w_{\hat f,\hat g}), \,\p (W_{\hat f,\hat g})\subset \hat f(\ti\Si),
\end{equation}
 and by definition of $W_{\hat f,\hat g}$ it is easy
to see that $\hat f(\ti \Si)\subset W_{\hat f,\hat g}$.

Fix liftings $\ti f:\ti \Si\to \ti M$ and $\ti g:\ti N\to \ti M$ of $f$ and $g$, respectively. We claim that
\begin{equation}\label{inclusion}
\mr g(\ti N)\subset W_{\ti f,\ti g}, \mbox{ for any lifting } \mr g:\ti N\to \ti M \mbox{ of } g.
\end{equation}
To prove this, assume by contradiction that $\mr g(\ti N)\not\subset W_{\ti f,\ti g}$, for some
lifting $\mr  g$. By Item \ref{m-nigual2} of Theorem \ref{completeness} we have that $\ti f(\ti\Si)\cap
\mr g(\ti N)=\emptyset$, hence $\p(W_{\ti f,\ti g})\cap \mr g(\ti N)=\emptyset$. Thus the connectedness of  $\mr g(\ti N)$ implies that 

\begin{equation}\label{forapo} \mr g(\ti N)\subset \bigl(\mbox{int}(w_{\ti f,\mr g})\cap (M-W_{\ti f,\ti g})\bigr).\end{equation}

To get the desired contradiction we first prove  that
$\p(w_{\ti f,\mr g})\subset\p(W_{\ti f,\ti g})$. To see this we take a point $x\in \p(w_{\ti f,\mr g})$.
Set $z=\g_{x,\mr g}(\ep)$ and $\eta=\g_{x,\mr g}=\ga_{z,\mr g}$. By definition of $w_{\ti f,\mr g}$ we have that $x=\eta(t_{z,\mr g})$. Notice that $\p(w_{\ti f,\mr g})\subset \ti f(\ti\Si)\subset W_{\ti f,\ti g}$, hence $x=\eta(t_{z,\mr g})\in W_{\ti f,\ti g}$ and
$\eta(0)\in \mr g(\ti N)\subset \ti M-W_{\ti f,\ti g}$ and thus there must exist $t_0\in [0,t_{z,\mr g}]$ such
that $\eta(t_0)\in \p(W_{\ti f,\ti g})$. From the fact that $\p(W_{\ti f,\ti g})\subset \ti f(\ti \Si)$ and the  definition of $t_{z,\mr g}$ we have that $\eta([0,t_{z,\mr g}))\cap \p W_{\ti f,\ti g}\subset \eta([0,t_{z,\mr g}))\cap \ti f(\ti \Si)=\emptyset$. Thus we obtain that $t_0=t_{z,\mr g}$, hence $x=\eta(t_0) \in \p(W_{\ti f,\ti g})$. Thus we
proved that $\p(w_{\ti f,\mr g})\subset\p(W_{\ti f,\ti g})$. 

Notice that $\p(w_{\ti f,\mr g})$ and
$\p(W_{\ti f,\ti g})$ are connected closed $C^0$-manifolds with the same dimension $m-1$. Thus from the inclusion
 $\p(w_{\ti f,\mr g})\subset \p(W_{\ti f,\ti g})$ we obtain that
\begin{equation}\label{equality}
\p(w_{\ti f,\mr g})=\p(W_{\ti f,\ti g}).
\end{equation}
We assert that
\begin{equation}\label{subset}
\mbox{int}(w_{\ti f,\mr g})\subset M-W_{\ti f,\ti g}.
\end{equation}
In fact, assume by contradiction
that there exists $z'\in \mbox{int}(w_{\ti f,\mr g})\cap W_{\ti f,\ti g}$. By using (\ref{forapo}) and 
the connectedness of $\mbox{int}(w_{\ti f,\mr g})$ we obtain that $\mbox{int}(w_{\ti f,\mr g})\cap \p(W_{\ti f,\ti g})\not=\emptyset$, which
contradicts (\ref{equality}). Thus, using (\ref{subset}), we obtain that
\begin{equation}\label{interior}
\mbox{int}(w_{\ti f,\mr g})\cap \mbox{int}(W_{\ti f,\ti g})=\emptyset.
\end{equation}
Thus (\ref{equality}) and (\ref{interior}) imply together that $(w_{\ti f,\mr g}\cup W_{\ti f,\ti g})$ is an $m$-dimensional closed $C^0$-manifold. We conclude that $w_{\ti f,\mr g}\cup W_{\ti f,\ti g}=\ti M$, which contradicts the fact that $\ti M$ is noncompact. This proves (\ref{inclusion}).

Now we will finish the proof of Item \ref{it3linha}-\ref{Mnoncompact}. Fix $q\in N$ and $\ti q\in \mu^{-1}(\{q\})\subset
\ti N$. By using the Fundamental Lifting Theorem and (\ref{inclusion}) we obtain that $$P^{-1}(\{g(q)\})=\{\mr g(\ti q)\bigm| \mr g:\ti N\to \ti M \mbox{ is a lifting of }g
\}\subset W_{\ti f,\ti g}\,.$$ From the compactness of $W_{\ti f,\ti g}$ and the fact that $P^{-1}(\{g(q)\})$
is a discrete set we conclude that the set $P^{-1}(\{g(q)\})$ is finite, hence $M$ has finite fundamental
group and must be noncompact, since $\ti M$ is noncompact. Item \ref{it3linha}-\ref{Mnoncompact} is
proved.

Now we will prove Item \ref{it3linha}-\ref{it5linha}. Fix liftings $\ti f:\ti\Si\to \ti M$ and $\ti g:\ti N\to \ti M$ of $f$ and $g$, respectively. 
Consider the inclusion map $\iota:f(\Si)\to M$. To prove  that $\iota_*^1$ is surjective it suffices to show that $P^{-1}(f(\Si))$ is path-connected. Consider the
group $\mbox{Aut}(P)$ of the automorphisms $\al:\ti M\to
\ti M$ of the covering map $P:\ti M\to M$. Since we are using induced metrics we have that $\al$ is
an isometry for any $\al\in \mbox{Aut}(P)$. By the Fundamental Lifting Theorem we have that
\begin{equation*}
P^{-1}(f(\Si))=\bigcup_{\al\in \mbox{Aut}(P)}(\al\circ\ti f)(\ti\Si).
\end{equation*}
Thus to prove that  $P^{-1}(f(\Si))$ is path-connected it suffices to show that
\begin{equation*} \ti f(\ti\Si)\cap (\al\circ\ti f)(\ti\Si)\not=\emptyset,\mbox{ for any }\al\in\mbox{Aut}(P),
\end{equation*}
which, by using (\ref{boundarycontainedfsigma}), follows from the next assertion:
\begin{equation} \label{W}\p(W_{\ti f,\ti g})\cap \p(W_{(\al\circ\ti f),(\al\circ\ti g)})\not=\emptyset,\mbox{ for any }\al\in\mbox{Aut}(P).
\end{equation}
We assume by contradiction that assertion (\ref{W}) is false. Then there exists $\al\in\mbox{Aut}(P)$ such that $\p(W_{\ti f,\ti g})\cap \p(W_{\mr f,\mr g})=\emptyset$ where $\mr f=\al\circ \ti f$ and
$\mr g=\al\circ \ti g$.
Since $\mr f$ and $\mr g$ are lifting maps of $f$ and $g$, respectively, we have by (\ref{inclusion}) that $\ti g(\ti N)\subset W_{\ti f,\ti g}\cap W_{\mr f,\mr g}$. Since $W_{\ti f,\ti g}$ and $W_{\mr f,\mr g}$ are compact submanifolds with connected boundaries and $\ti M$ is noncompact it follows from Lemma \ref{topology} that  either $W_{\ti f,\ti g}\subset W_{\mr f,\mr g}$ or $W_{\mr f,\mr g}\subset W_{\ti f,\ti g}$. Assume without loss of the generality that $W_{\ti f,\ti g}\subset W_{\mr f,\mr g}$. 
Since $\p W_{\ti f,\ti g}\cap\p W_{\mr f,\mr g}=\emptyset$ it holds that $\p W_{\ti f,\ti g}\subset \mbox{int}(W_{\mr f,\mr g})$. 
Furthermore we have $\mbox{int}(W_{\ti f,\ti g})\subset\mbox{int}(W_{\mr f,\mr g})$, hence $W_{\ti f,\ti g}\subset
\mbox{int}(W_{\mr f,\mr g})$.  Thus $\vol(W_{\ti f,\ti g})<\vol(W_{\mr f,\mr  g})$.

Since $\al$ is an isometry it is easy to see that
\begin{equation}
\al(W_{\ti f,\ti g})= W_{(\al\circ\ti f),(\al\circ\ti g)}= W_{\mr f,\mr g}.
\end{equation}
Thus we have that $\vol(W_{\ti f,\ti g})=\vol(W_{\mr f,\mr  g})$. This contradiction concludes
the proof of Item \ref{it3linha}-\ref{it5linha}. 

To prove Item \ref{it3linha}-\ref{mndois} we assume that $m-n=2$. We have that $\ti f_*^2:\pi_2(\ti \Si)\to \pi_2(\ti M)$ is 
monomorphism by Item \ref{monom2} of Corollary \ref{corollary_exact}, hence $f_*^2:\pi_2(\Si)\to \pi_2(M)$ is also 
a monomorphism.

 Set $S_\ep=S_{\ep,\ti g}$  
and consider the long exact sequence associated to the fibration $S^1\to S_\ep\to \ti g(\ti N)$: 
\begin{equation}\label{longfiberagain}
\xymatrix{0=\pi_2(S^{1})\ar[r]&\pi_2(S_\ep)\ar[r]^\psi&\pi_2(\ti g(\ti N))\ar[r]^\xi&\pi_{1}(S^{1})=\Bbb Z\ar[r]^\de&\pi_1(S_{\ep})\,.
}
\end{equation}
Since $\ti \Si$ is compact and covers $S_\ep$ we have that $\pi_1(S_{\ep})$ is finite.  The kernel $\mbox{Ker}(\de)$ is 
a subgroup of $\Bbb Z$ and the quotient $\Bbb Z/{\mbox{Ker}(\de)}$ is a subgroup of $\pi_1(S_{\ep})$, hence it is finite. 
This implies that $\xi(\pi_2(\ti g(\ti N))=\mbox{Ker}(\de)$ is cyclic infinite.  As a consequence we have that 
$\pi_2(N)$ and $\pi_2(M)$ are infinite since they are both isomorphic to $\pi_2(\ti g(\ti N))$. We also have that
\begin{eqnarray*}\Bbb Z&=&\mbox{Ker}(\de)=\xi(\pi_2(\ti g(\ti N)))=\pi_2(\ti g(\ti N))/\mbox{Ker}(\xi)
=\pi_2(\ti g(\ti N))/\psi(\pi_2(S_\ep))\\
&=&\pi_2(M)/\pi_2(\Si).
\end{eqnarray*}
In the last equality we used that 
$\pi_2(\ti g(\ti N))$ is isomorphic to $\pi_2(M)$ and that $\psi(\pi_2(S_\ep))$ is isomorphic to 
$\pi_2(\Si)$, since $\psi$ is a monomorphism and $\pi_2(\Si)$ is isomorphic to $\pi_2(S_\epsilon)$. 
Theorem \ref{completeness2} is proved.
\end{proof}


\section{\bf Examples}\label{secexamples}

Let $p:M\to N$ be a vector bundle, where $V_x$ denotes the fiber over $x$. It is well known that there
exists a smooth map $x\in N\mapsto \left<\,,\,\right>_x$, where $\left<\,,\,\right>_x$ is
an inner product on $V_x$. This map is usually called a fiber metric of $p$.

\begin{proposition} \label{bundle} Let $p:M^m\to N^n$ be a vector bundle over the manifold
$N$ and let  $V_x$ denote the fiber over $x$.
Fix a smooth fiber metric $x\in N\mapsto \left<\,,\,\right>_x$, where $\left<\,,\,\right>_x$ is
an inner product on $V_x$. Let $S_x\subset V_x$ be the unit sphere centered at the origin and
set $\m S_1=\cup_{x\in N}S_x$. Then there exists a Riemannian metric on the total space
$M$ such that the hypersurface
$\m S_1$ is totally geodesic and the null section $g:N\to N_0=g(N)\subset M$ satisfies  that
$\exp^\perp:\m N_g\to M$ is a diffeomorphism, hence $g$ is free of focal points. If further $N$ is compact then $M$ is
complete.
\end{proposition}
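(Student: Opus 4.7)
My plan is to construct $\omega$ as a Riemannian submersion metric over $(N,g_N)$ with rotationally symmetric warped-Euclidean fibers. First, choose a connection $\na^p$ on $p$ compatible with the fiber metric $\lan\cdot,\cdot\ran_x$ (such connections exist on any metric vector bundle), yielding an orthogonal splitting $TM=\m V\oplus\m H$. Fix any Riemannian metric $g_N$ on $N$ and a smooth $\si:[0,\infty)\to[0,\infty)$ with $\si(r)>0$ for $r>0$, $\si(0)=0$, $\si'(0)=1$, $\si'(1)=0$, and all even-order derivatives of $\si$ vanishing at $0$. Then define $\omega$ by declaring $\m V\perp\m H$, using $p^*g_N$ on $\m H$, and using the rotationally symmetric warped metric $dr^2+\si(r)^2 g_{S_x}$ on each fiber $V_x$, where $r=|v|_x$ and $g_{S_x}$ is the round metric on $S_x\subset V_x$ induced by $\lan\cdot,\cdot\ran_x$. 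The conditions on $\si$ at $0$ ensure $\omega$ extends smoothly across the zero section $N_0$ by the standard polar-smoothness criterion applied fiberwise.

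Metric compatibility of $\na^p$ implies that horizontal transport is an isometry of $\lan\cdot,\cdot\ran_x$, and since the warped polar metric depends only on $\lan\cdot,\cdot\ran_x$, horizontal transport also preserves the full fiber metric. Hence the O'Neill tensor $T$ for the Riemannian submersion $p:(M,\omega)\to(N,g_N)$ vanishes and each fiber $V_x$ is totally geodesic in $(M,\omega)$. Since $\si'(0)=1$, the radial curves $t\mapsto ty$ (for $y\in S_x$) are unit-speed geodesics in the fiber and hence in $M$. At $g(x)\in N_0$ the vertical $V_x$ is $\omega$-orthogonal to $T_xN_0$, so $\m N_{g,x}=V_x$; under the natural identification $\m N_g=M$ the map $\exp^\perp$ becomes the identity $(x,v)\mapsto v$, which is a diffeomorphism, so $g$ is free of focal points.

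To see $\m S_1$ is totally geodesic, I pull $\omega$ back via the diffeomorphism $\Phi:\m S_1\times(0,\infty)\to M-N_0$, $\Phi(y,r)=ry$. A direct calculation gives, on $T_{(y,r)}(\m S_1\times(0,\infty))=\real\,\p_r\oplus\m H_y\oplus y^\perp$,
\[
\Phi^*\omega\;=\;dr^2\;+\;p^*g_N\big|_{\m H}\;+\;\si(r)^2\,\lan\cdot,\cdot\ran_x\big|_{y^\perp}.
\]
Extending tangent vectors of $\{r=c\}$ to be $r$-parallel and applying the Koszul formula computes the second fundamental form of $\{r=c\}$ with respect to the unit outward normal $\p_r$ as $\mathrm{II}(X,Z)=-\si(c)\si'(c)\lan V_X,V_Z\ran_x$, where $V_X,V_Z$ are the $y^\perp$-components of $X,Z$. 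At $c=1$ this vanishes since $\si'(1)=0$, so $\m S_1$ is totally geodesic in $(M,\omega)$.

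For completeness when $N$ is compact, the radial function $\xi(v)=|v|_x$ is $1$-Lipschitz in $(M,\omega)$, since its gradient (where defined) is the unit radial vector. Any unit-speed geodesic $\g:[0,T)\to M$ thus satisfies $\xi(\g(t))\le\xi(\g(0))+T$ and stays inside the closed disc bundle $\{\xi\le R\}$, a compact fiber bundle over compact $N$; standard ODE extension then yields geodesic completeness, and Hopf--Rinow gives metric completeness. The main obstacle I foresee is verifying the smoothness of $\omega$ across $N_0$: one needs the oddness conditions on $\si$ so that the fiberwise polar metric extends smoothly at each pole, together with a careful check that the horizontal-vertical splitting induced by $\na^p$ meshes with this extension at the zero section.
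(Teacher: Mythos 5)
Your construction is correct but follows a genuinely different route from the paper's. The paper first puts the push-forward Sasaki metric $\om_2$ on $M$ (so that $\exp^\perp$ is literally the bundle isomorphism $\varphi$), then builds a second metric $\om_3$ on $M-N_0$ from the Sasaki metric on $\m N_{\iota,\om_2}$ in which $\m S_1$ is totally geodesic, and finally interpolates $\om=(1-\zeta)\om_2+\zeta\om_3$ in a small collar of $\m S_1$; the bulk of the paper's work is then a Koszul computation to show the radial curves $t\mapsto tv$ remain $\om$-geodesics, so that $\exp^\perp_{g,\om}=\varphi$ survives the interpolation. You instead build a single Riemannian submersion metric with $r$-warped polar fibers ($\si(0)=0$, $\si'(0)=1$, $\si'(1)=0$, $\si$ odd at $0$), so that: (i) metric compatibility of $\na^p$ makes the horizontal flow a fiberwise isometry, which by the O'Neill formula $\om(T(U,V),X^h)=-\tfrac12(\m L_{X^h}\om)(U,V)$ gives $T=0$ and totally geodesic fibers; (ii) radial lines are then geodesics of $M$ and $\exp^\perp$ is the obvious identification $\m N_g\cong M$; and (iii) the second fundamental form of $\{r=c\}$ is $-\tfrac12\p_r h_r$, which at $c=1$ vanishes because $\si'(1)=0$. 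This is essentially the paper's own Example \ref{euclidean} phenomenon implemented fiberwise; it avoids both the interpolation and the need to re-verify that radial curves remain geodesics, at the cost of invoking the $T$-tensor criterion for totally geodesic fibers. Both are correct; yours is arguably the more transparent single-metric construction, while the paper's two-metric interpolation avoids any use of submersion machinery. The only point you flag but do not fully discharge --- smoothness of $\om$ across $N_0$ --- is indeed the delicate step, and your oddness assumption on $\si$ together with smoothness of the connection's horizontal distribution is exactly what is needed; writing this out in local trivializations would complete the argument.
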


\begin{proof}  It is well known and easy to see that there exists a smooth distribution $z\in M\mapsto H_z\subset T_zM$
 where $H_z$ is an $n$-dimensional linear subspace which is transversal to the submanifold $V_{p(z)}$ at $z$ and
 has the property that  $H_{g(y)}=T_{g(y)}N_0$ for all $y\in N$. Fix $z\in M$ and set $p(z)=x$.
 We have a decomposition $T_zM=H_z\oplus
 T_z(V_x)$. Given $X\in T_zM$ we write $X=X_H+X_V$ with $X_H\in H_z$ and $X_V\in T_z(V_x)$.
 Given $X\in T_z(V_x)$, there exists a unique $K_z(X)\in V_x$ such that $X=\frac d{dt}\bigm|_{t=0}(z+tK_z(X))$.
It is easy to see that $dp_z\bigm|_{H_z}:H_z\to T_xN$ and $K_z:T_z(V_x)\to V_x$ are
isomorphisms. Fix any Riemannian metric $\om_N$ on $N$. We define on $M$ the Sasaki type
Riemannian metric $\om_1$ (compare with \cite{so}) by the equality
 \begin{equation}\label{sasakimetric} \om_1(X,Y)=\om_N(dp_z(X_H),dp_z(Y_H))+\left<K_z(X_V),K_z(Y_V)\right>_x,\end{equation}
 for any $X,Y\in T_zM$.

Since $H_{g(x)}=T_{g(x)}N_0$ for all $x\in N$ we conclude that $N_0=g(N)$ is orthogonal to the
submanifold $V_x$.
Thus we have that
\begin{equation*}\label{Ng} \m N_g=\m N_{g,\,\om_1}=\{(x,v)\bigm|x\in N,\ v\in (T_{g(x)}N_0)^\perp\}=
\{(x,v)\bigm|x\in N,\ v\in T_{g(x)}V_x\}.
\end{equation*}
Thus we have
a natural diffeomorphism $\varphi:\m N_g\to M$ given by $\varphi(x,v)=K_{g(x)}(v)$. In fact, it is easy to
see that the inverse map satisfies $\varphi^{-1}(z)=(p(z),(K_{g(p(z))})^{-1}(z))$. The map $\varphi$  is
in
fact an isomorphism between vector bundles.

We claim that $\m S_1=\varphi(\m N_g^1)$, where $\m N_g^1$ is the unit normal bundle
of $g$. In fact, by using (\ref{sasakimetric}), we obtain the following equivalences:
\begin{eqnarray*}(x,v)\in \m N_g^1 &\iff&
x\in N, \ v\in T_{g(x)}V_x\mbox{ and }\om_1(v,v)=1\\
&\iff&(x,v)\in\m N_g\mbox{ \ and\ }\left<K_{g(x)}(v),K_{g(x)}(v)\right>_{x}=1\\
&\iff&(x,v)\in\m N_g\mbox{ and }\left<\varphi(x,v),\varphi(x,v)\right>_x=1\\
&\iff&(x,v)\in\m N_g\mbox{ and }\varphi(x,v)\in S_x\\
&\iff&(x,v)\in\m N_g\mbox{ and }\varphi(x,v)\in \m S_1\\
&\iff&(x,v)\in \varphi^{-1}(\m S_1).
\end{eqnarray*}
We conclude that $\m N_g^1=\varphi^{-1}(\m S_1)$, hence $\m S_1=\varphi(\m N_g^1)$. Our claim is proved.

In fact it can be proved that the normal exponential map $\exp^\perp_{g,\,\om_1}:\m N_g\to M$, associated with the map $g$ and the metric
$\om_1$, coincides with the diffeomorphism $\varphi$. This can be done by making computations similar to those  for
the Sasaki metrics in $TN$ and $\m N_g$ (see \cite{do}, \cite{gk}, \cite{by}). To avoid these computations we will complete the
proof of Proposition \ref{bundle} without using this fact.
We will just consider on $M$ the metric $\om_2$ induced
by the diffeomorphism $\varphi$ from the Sasaki metric on $\m N_g$ (see \cite{by}).
Consider the geodesic $\bar \g:[0,+\infty)\to \m N_g$ given by $\bar \g(t)=
(x,tv)$. It is well known that $\bar \g'(0)$ is orthogonal to $N\times\{0\}$ with respect
 to the Sasaki metric. This implies that the map $\g=\varphi\circ\bar \g$ is an $\om_2$-geodesic which is $\om_2$-orthogonal to $\varphi(N\times\{0\})=K_{g(N)}(0)=g(N)=N_0$. Thus we have that $\exp_{g,\,\om_2}^\perp
(x,tv)=\g(t)=\varphi(x,tv)$, hence $\exp^\perp_{g,\,\om_2}=\varphi$, hence $\exp^\perp_{g,\,\om_2}$
is a diffeomorphism and even an isometric isomorphism between the vector bundles
$\m N_g$ and $M$. For this metric it is easy to see that
the normal bundle $\m N_{g,\,\om_2}$ coincides with $\m N_g$ and it holds that
\begin{equation}\label{w2}\exp^\perp_{g,\,\om_2}(x,tv)=\varphi(x,tv)=tK_{g(x)}(v).\end{equation}

We set:
$$\m S_r=\exp^\perp_{g,\,\om_2}(\{(x,v)\in \m N_g\bigm|\om_1(v,v)=r^2\}),$$
$$\m B_r=\exp^\perp_{g,\,\om_2}(\{(x,v)\in \m N_g\bigm|\om_1(v,v)<r^2\}).$$
Since $\exp^\perp_{g,\,\om_2}:\m N_g\to M$ is a diffeomorphism, it is easy to see that a geodesic
$\g:\real\to M$ satisfies
\begin{eqnarray}\label{orthogonality}\g\mbox{  is }
\om_2-\mbox{orthogonal to }N_0&\iff&\g\mbox{ is }\om_2-\mbox{orthogonal to }\m S_r\mbox{ for some }r>0\nonumber\\
&\iff&\g\mbox{ is }\om_2-\mbox{orthogonal to }\m S_r\mbox{ for any }r>0.
\end{eqnarray}
With respect to the metric
$\om_2$, by using (\ref{orthogonality}) it is easy
to see that if $\si_1,\si_2$ are geodesics orthogonal to $\m S_1$ then
they may be defined on all $\Bbb R$ (since they are reparameterizations of geodesics
orthogonal to $g(N)$) and we also have that
either
\begin{equation}\label{intersections} \si_1(\Bbb R)=\si_2(\Bbb R),\ \si_1(\Bbb R)\cap\si_2(\Bbb R)=\emptyset \ \mbox{ or }\ \si_1(\Bbb R)\cap\si_2(\Bbb R)=\{z\}\subset N_0.\end{equation}
Thus we conclude from (\ref{intersections}) that $M-N_0$ is an open tubular neighborhood of
$\m S_1$ in the general sense of Definition \ref{tubular}. More precisely, if we consider
 the inclusion map $\iota:\m S_1\to M$ we have that
$$\exp^\perp_{\iota,\,\om_2}|_W
:W\to M-N_0$$
 is a diffeomorphism, where $W=\{(z,t\nu(z))\in \m N_{\iota,\,\om_2}\bigm|t\in (-1, +\infty)\}$ and $\nu(z)\in T_zM$ is the $\om_2$-unitary vector orthogonal to
$\m S_1$ which points outwards the set $\m B_1$.

For $0<s<1$, let $U_s=U_{s,\,\om_2}$ denote an open $s$-tubular neighborhood of $\m S_1$ with
respect to the metric $\om_2$.
From (\ref{orthogonality}) and the facts that  $\exp^\perp_{g,\,\om_2}\m N_g\to M$ and $\exp^\perp_{\iota,\,\om_2}|_W:W\to M-N_0$ are
diffeomorphisms it is easy to see that
\begin{equation}\label{Us} \partial U_s=\m S_{1-s}\cup \m S_{1+s} \mbox{ for all }0<s<1.\end{equation}

Now we begin the construction of a metric on $M$ such that  $\m S_1$ becomes totally
geodesic.
We consider on $\m N_{\iota,\,\om_2}$ the Sasaki metric and introduce on $M-N_0$ the metric $\om_3$
induced by the diffeomorphism  $\exp^\perp_{\iota,\,\om_2}\bigm|_{W}$. Notice that $\m S_1$ is a
totally geodesic hypersurface
 of $M-N_0$ if we consider the metric $\om_3$. Furthermore we have trivially that, for any
 $0<s<1$ it holds that
 \begin{equation} \label{23}U_{s}=U_{s,\,\om_2}=U_{s,\,\om_3}.\end{equation}
 Fix $0<\de<\ep<1$.
 We consider a smooth bump function $\zeta:M\to [0,1]$ such that $\zeta(z)=1$ on $U_\de$ and $\zeta(z)=0$ outside $U_\ep$. We define on $M$ the metric
\begin{equation}\label{finalmetric} \om=(1-\zeta)\,\om_2+\zeta\,\om_3.\end{equation}
Thus the hypersurface $\m S_1$ is totally geodesic relatively to the metric $\om$. Furthermore,
take an $\om_2$-geodesic $\g$ which starts from $N_0$ orthogonally with
respect to $\om_2$. By (\ref{orthogonality}) we see that $\g$ is $\om_2$-orthogonal to $\m S_1$,
hence it is also $\om_3$-orthogonal to $\m S_1$ and it holds that $\om_2(\g'(t),\g'(t))
=\om_3(\g'(t),\g'(t))$, since the Sasaki metric $\om_3$ preserves orthogonality and length on radial directions. By (\ref{finalmetric}) we conclude that
\begin{equation}\label{gauss}\om(\g'(t),\g'(t))=\om_2(\g'(t),\g'(t))
=\om_3(\g'(t),\g'(t)).
\end{equation}

Our goal
now is to prove that $\exp^\perp_{g,\,\om}:\m N_g\to M$ is a diffeomorphism (in particular
$g:N\to (M,\om)$ will be free of focal points). For this it suffices to show that $\varphi=\exp^\perp_{g,\,\om}$. From the
construction of the metric $\om$  it follows that if $\om_2(v,v)<1-\ep$ then it holds that $\varphi(x,v)=\exp^\perp_{g,\,\om}(x,v)$.
Thus we just need to show that for all $(x,v)\in \m N_g-(N\times\{0\})$
it holds that the curve $\g=\g_{x,\,v}:[0,+\infty)\to V_x\subset M$ given by
$\g(t)=\varphi(x,tv)$ is an $\om$-geodesic (we already know that $\g$ is an
$\om_2$-geodesic by (\ref{w2})).

We fix an $\om_2$-geodesic $\g=\g_{x,\,v}$ as above with $\om_2(v,v)=1$.
We know that $\g|_{(0,+\infty)}$
is an $\om_3$-geodesic. By (\ref{Us}), (\ref{23}) and (\ref{finalmetric}) we just need to prove that $\g|_{([1-\ep,\,1-\de]\cup[1+\de,\,1+\ep])}$
is an $\om$-geodesic. We will prove that $\g|_{[1+\de,1+\ep]}$ is an $\om$-geodesic, since
the other case is similar. Fix $t\in [1+\de,1+\ep]$. By the Gauss Lemma we have that
\begin{equation} \label{om2}\om_2(\g'(t),\eta)=0 \iff \eta\in T_{\g(t)}\m S_t.\end{equation}
Set $t=1+s$ with $\de<s<\ep$. From the Gauss Lemma, we obtain from (\ref{Us}) and (\ref{23}) that
\begin{eqnarray}\label{om3} \om_3(\g'(t),\eta)=0 &\iff& \eta\in T_{\g(t)}\left(\partial U_{s,\,\om_3}\right)
=T_{\g(t)}(\m S_{1-s}\cup \m S_{1+s})\\&=& T_{\g(t)}(\m S_{2-t}\cup \m S_{t})=T_{\g(t)}\m S_{t}.\nonumber\end{eqnarray}
Thus from (\ref{finalmetric}), (\ref{om2}) and (\ref{om3})  it holds that if $\eta\in T_{\g(t)}\m S_{t}$
then $\om(\g'(t),\eta)=0$. Thus we have the inclusion $T_{\g(t)}\m S_t\subset\Omega_t=\{\eta\in T_{\g(t)}M\bigm|\om(\g'(t),\eta)=0\}$ between $(m-1)$-dimensional linear spaces, which implies
that $\Omega_t=T_{\g(t)}\m S_{t}$, hence it holds that
\begin{equation}\label{Omega} \om(\g'(t),\eta)=0\iff \om_2(\g'(t),\eta)=0\iff\om_3(\g'(t),\eta)=0.\end{equation}
Now we are in condition to prove that $\g$ is an $\om$-geodesic at $t\in[1+\de,1+\ep]$. On a small neighborhood of $\g(t)$
we consider the smooth $\om$-unitary vector field $X$ given by the equation $X(\g_{y,\,u}(s))=\g_{y,\,u}'(s)$ with $\om_2(u,u)=1$ and $y$ in a neighborhood of $x\in N$. Note that   (\ref{gauss}) implies that \begin{equation}\label{XX}\om(X,X)=\om_2(X,X)=\om_3(X,X)=1. \end{equation}
 Let $\nabla, \nabla^2$ and $\nabla^3$
denote, respectively, the Levi-Civita connections associated to $\om, \om_2$ and $\om_3$, respectively.
Since $\om(X,X)=1$ we have that $\om\left(\nabla_XX,X\right)=0$. Thus  we only need to
prove that $\om\left(\nabla_XX,\eta\right)=0$ at $\g(t)$, for any $\eta\in \Omega_t$. Fix
$\eta_0\in \Omega_t$ and extend it to a smooth vector field $\eta$ on a neighborhood of $\g(t)$.
From (\ref{XX}) we have that
\begin{equation}\label{eta2} \eta(\om(X,X))=\eta(\om_2(X,X))=\eta(\om_3(X,X))=0.\end{equation}
Thus
by using the Koszul formula and the fact that $\g|_{[1+\de,1+\ep]}$ is a geodesic with respect to $\om_2$
and $\om_3$ we have that
\begin{equation}\label{Koszul2}0=\om_2\left(\nabla^2_XX,\eta\right)=
X\left(\om_2\left(X,\eta\right)\right)-\om_2\left([X,\eta],X
\right),
\end{equation}
\begin{equation}\label{Koszul3}0=\om_3\left(\nabla^3_XX,\eta\right)=
X\left(\om_3\left(X,\eta\right)\right)-\om_3\left([X,\eta],X
\right).
\end{equation}
By using the Koszul Formula, it follows from (\ref{finalmetric}),
(\ref{Omega}), (\ref{eta2}), (\ref{Koszul2}), (\ref{Koszul3}) that:
\begin{eqnarray}
\om\left(\nabla_XX,\eta\right)&=&
X\left(\om\left(X,\eta\right)\right)-\om\left([X,\eta],X
\right)\\&=& X\bigl\{(1-\zeta)\om_2\left(X,\eta\right)+\zeta\om_3\left(X,\eta\right)\bigr\}\nonumber\\&-&
(1-\zeta)\om_2\left([X,\eta],X\right)-\zeta\om_3\left([X,\eta],X\right)\nonumber\\&=&
X(\zeta)\bigl\{\om_3(X,\eta)-\om_2(X,\eta)\bigr\}\nonumber\\ &+&
(1-\zeta)\bigl\{X\left(\om_2\left(X,\eta\right)\right)-\om_2\left([X,\eta],X
\right)\bigr\}\nonumber\\&+&
\zeta\,\bigl\{X\left(\om_3\left(X,\eta\right)\right)-\om_3\left([X,\eta],X
\right)\bigr\}\nonumber\nonumber\\&=&X(\zeta)\bigl\{\om_3(X,\eta)-\om_2(X,\eta)\bigr\}\nonumber\\&=&0.\nonumber
\end{eqnarray}
We conclude that $\g$ is an $\om$-geodesic, hence $\varphi=\exp^\perp_{g,\,\om}$ and therefore the map
$\exp^\perp_{g,\,\om}:\m N_g\to M$ is a diffeomorphism.

Finally, if we have the additional hypothesis that $N$ is compact, we obtain from Corollary  \ref{mendzhou}
that $M$ is complete. Proposition \ref{bundle} is proved.
\end{proof}

\begin{corollary} \label{diffeomorphism} Let $M,N$ be Riemannian manifolds and $g:N\to M$ an immersion such that $\exp^\perp_g: \m N_{g}\to M$ is
a diffeomorphism. Then there exists a Riemannian metric $\om$ on $M$ such that
 $\exp^\perp_{g,\,\om}: \m N_g\to M$  is a diffeomorphism and $\m S_1=\exp^\perp(\m N_g^1)\subset M$ is totally geodesic.
 \end{corollary}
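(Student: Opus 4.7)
My plan is to deduce this corollary directly from Proposition \ref{bundle} by using the hypothesis that $\exp^\perp_g:\m N_g\to M$ is a diffeomorphism to realize $M$ itself as the total space of the vector bundle $\m N_g\to N$.

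First I would transport the vector bundle structure of the normal bundle to $M$. Let $\pi:\m N_g\to N$ be the natural projection and define $p:M\to N$ by $p=\pi\circ (\exp^\perp_g)^{-1}$. Then $p$ gives $M$ the structure of a smooth vector bundle over $N$, with fiber $V_x=\exp^\perp_g(\pi^{-1}(x))$ over $x\in N$. Because $d(\exp^\perp_g)_{(x,0)}$ is the identity on vertical vectors, the null section of $p$ is exactly the immersion $g:N\to M$, and the tangent space to $V_x$ at $g(x)$ coincides with $(dg_x(T_xN))^\perp$ taken with respect to the original metric on $M$. In particular, the same bundle $\m N_g$ will serve as the normal bundle for the Sasaki-type metric we are about to construct, since any such metric makes the null section orthogonal to the fibers.

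Next I would equip $p:M\to N$ with the fiber metric obtained by transporting to $V_x$ the natural inner product on $(\m N_g)_x=(dg_x(T_xN))^\perp$ inherited from the original Riemannian metric on $M$. With this choice, the unit sphere $S_x\subset V_x$ is precisely $\exp^\perp_g(\pi^{-1}(x)\cap \m N_g^1)$, so the unit sphere bundle $\m S_1=\cup_{x\in N}S_x$ produced in the setup of Proposition \ref{bundle} agrees exactly with the set $\m S_1=\exp^\perp(\m N_g^1)$ appearing in the statement of the corollary.

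Finally, applying Proposition \ref{bundle} to the vector bundle $p:M\to N$ with this chosen fiber metric yields a Riemannian metric $\om$ on $M$ for which the hypersurface $\m S_1$ is totally geodesic and the map $\exp^\perp_{g,\om}:\m N_g\to M$ is a diffeomorphism, which is exactly the conclusion sought. There is essentially no obstacle; the only delicate point is the bookkeeping in the second step, namely choosing the fiber metric on $\m N_g$ so that the abstract unit sphere bundle of Proposition \ref{bundle} literally coincides with the set $\exp^\perp(\m N_g^1)$ named in the corollary. Once that identification is in place, the conclusion is immediate.
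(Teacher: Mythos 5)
Your proof is correct and takes essentially the same approach as the paper. The paper applies Proposition \ref{bundle} to the bundle $\m N_g\to N$ with the null section $\iota(x)=(x,0)$ and then pushes the resulting metric forward to $M$ through the diffeomorphism $\exp^\perp_g$; you instead transport the bundle structure of $\m N_g$ to $M$ via $\exp^\perp_g$ first and then apply Proposition \ref{bundle} directly on $M$. These are the same argument up to conjugation by $\exp^\perp_g$, and your explicit choice of fiber metric (the one inherited from the ambient metric on $(\m N_g)_x=(dg_x(T_xN))^\perp$) spells out a point the paper leaves implicit, namely that this choice is what makes the abstract unit sphere bundle of Proposition \ref{bundle} coincide with the set $\exp^\perp(\m N_g^1)$ named in the statement.
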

 \begin{proof} Let $\iota:N\to \m N_g$ be the embedding given by $\iota(x)=(x,0)$.
 Apply Proposition \ref{bundle} to $\m N_g$ and obtain a metric $\om_1$ on $\m N_g$
 such that $\exp^\perp_{\iota,\,\om_1}:\m N_{\iota}\to \m N_g$ is a diffeomorphism
 and $\m N_g^1$ becomes totally geodesic. Now we just introduce the metric
 on $M$ induced by the diffeomorphism $\exp^\perp_g: \m N_g\to M$. Corollary \ref{diffeomorphism} is
 proved.
 \end{proof}

We recall some notations and facts about warped products. For a positive smooth function
$\rho:B\to \Bbb (0,\infty)$, the warped product $M=B\times_\rho N'$ is the product manifold $B\times N'$ with the metric
\begin{equation} \label{langle}\langle Z,\,W\rangle=ds^2\left(d\pi_B(Z),d\pi_B(W)\right)+(\rho\circ \pi_B)^2\langle d\pi_{N'}(Z),\,d\pi_{N'}(W)\rangle_{N'},\end{equation}
where $\pi_B,\pi_{N'}$ are the canonical projections of $B\times N'$ onto its corresponding factors and $\langle\ ,\,\rangle_{N'}$ is the metric of $N'$.\\

Consider the decomposition $TM=\m D(B)\oplus\m D(N')$, where $\m D(B)$ and $\m D(N')$ are the subbundles of $TM$ given by the distributions corresponding to the product foliations determined by $B$ and $N'$, respectively. For tangent vector fields $X$ on $B$ and $Y$ on $N'$  there exist unique lifts $X^h\in \m D(B)$ and $Y^v\in \m D(N')$ satisfying that $d\pi_B(X^h)=X$ and $d\pi_{N'}(Y^v)=Y$.   For tangent vector fields $X$ on $B$ and $Y$ on $N'$, the Levi-Civita connection $\bar\na$ of the warped product $M=B\times_\rho N'$ is related to the Levi-Civita connections $\na^B$ and $\na^{N'}$ of its corresponding factors by the following equations (see \cite{ON}):
\begin{enumerate}[(i)]
\item\label{horizontal} $\bar\na_{X^h}{X^h}=\left(\na^B_X X\right)^h$;
\item \label{misto}$\bar\na_{X^h}{Y^v}=\bar\na_{Y^v}{X^h}=ds^2(X,\eta)\,Y^v$;
\item \label{vertical}$\bar\na_{Y^v}{Y^v}= -\langle Y,Y\rangle_{N'}\ \eta^h+\left(\na^{N'}_YY\right)^v$;
\end{enumerate}
where $\eta=\na(\log \rho)$ and $\na$ is the gradient on $B$.

\begin{proposition}\label{warped} Take $B=(\mathbb{R}^k,ds^2)$ and
$ds^2=dr^2+\sigma^2(r)d\theta^2$ being the metric introduced in Example \ref{euclidean}.
Let $\m S=S^{k-1}\subset B$ be the totally geodesic unit sphere and $N'$ any manifold.
Consider a warped product $M=B\times_\rho N'$ and assume that
the gradient $(\nabla \rho)|_{\m S}$ is tangent to $\m S$. Then the inclusion maps:
$f:\Si=\m S\times N'\to M$ and $g:N=\{0\}\times N'\to M$  have the properties that $f$ is totally geodesic
and that the normal exponential map $\exp^\perp:\m N_g\to M$ is a diffeomorphism, hence $g$ is
free of focal points.
\end{proposition}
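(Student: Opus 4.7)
The plan is to verify both claims by direct computation with the warped-product connection formulas (i)--(iii) stated immediately before the proposition. For the totally geodesic conclusion, fix $(p,q)\in\Si=\m S\times N'$. Every tangent vector to $\Si$ at $(p,q)$ has the form $X^h+Y^v$ with $X\in T_p\m S$ and $Y\in T_qN'$, while the normal directions to $\Si$ inside $M$ are horizontal lifts $Z^h$ of radial vectors $Z\in T_pB$. Expanding $\bar\na_{X_1^h+Y_1^v}(X_2^h+Y_2^v)$ bilinearly produces four terms. By (i), $\bar\na_{X_1^h}X_2^h=(\na^B_{X_1}X_2)^h$ is tangent to $\Si$ because $\m S$ is totally geodesic in $B$. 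By (ii), the two mixed terms are scalar multiples of a vertical vector and hence automatically tangent to $\Si$. By (iii), $\bar\na_{Y_1^v}Y_2^v=-\langle Y_1,Y_2\rangle_{N'}\,\eta^h+(\na^{N'}_{Y_1}Y_2)^v$ with $\eta=\na(\log\rho)=(\na\rho)/\rho$; the vertical summand is tangent to $\Si$, and the horizontal summand $\eta^h$ is tangent to $\Si$ exactly because the hypothesis forces $(\na\rho)|_{\m S}$, hence $\eta|_{\m S}$, to be tangent to $\m S$. So the second fundamental form of $\Si$ vanishes.

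For the normal exponential, at $(0,q)\in g(N)=\{0\}\times N'$ the tangent space of $g(N)$ is the vertical subspace $\m D(N')_{(0,q)}$, and since the warped-product metric (\ref{langle}) is orthogonal with respect to $TM=\m D(B)\oplus\m D(N')$, the normal space of $g$ at $(0,q)$ is naturally identified with the horizontal subspace $T_0B$. Given $Z\in T_0B$, let $\al_Z:\real\to B$ be the $B$-geodesic with $\al_Z(0)=0$ and $\al_Z'(0)=Z$; formula (i) yields $\bar\na_{\al_Z'^h}\al_Z'^h=(\na^B_{\al_Z'}\al_Z')^h=0$, so $t\mapsto(\al_Z(t),q)$ is an $M$-geodesic orthogonal to $g(N)$ at $t=0$. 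Therefore $\exp^\perp_g((0,q),Z^h)=(\exp^B_0(Z),q)$, and since the origin is a pole of $B$ (as recalled in Example \ref{euclidean}), $\exp^B_0:T_0B\to B$ is a diffeomorphism. Consequently $\exp^\perp:\m N_g\to M$ is a diffeomorphism and in particular $g$ is free of focal points.

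The only delicate step is the vertical-vertical computation in (iii): without the assumption that $(\na\rho)|_{\m S}$ is tangent to $\m S$, the radial-horizontal component of $-\langle Y_1,Y_2\rangle_{N'}\,\eta^h$ would survive as a genuine normal component of the second fundamental form and $\Si$ would fail to be totally geodesic. Everything else is a routine unpacking of the product structure of $M$ and of the fact that $\exp^B_0$ is a diffeomorphism on $\real^k$.
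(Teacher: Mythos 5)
Your proof is correct and follows essentially the same route as the paper: the warped-product connection formulas (i)--(iii) together with the totally geodesic fibre $B_x\cong B$ and the pole at the origin. The only stylistic difference is that you expand the polarized second fundamental form $\bar\na_{X_1^h+Y_1^v}(X_2^h+Y_2^v)$ into four terms and check each is tangent to $\Si$, whereas the paper pairs $\bar\na_{\bar Z}\bar Z$ against the explicit unit normal $\mu=(\na r)^h$ and shows the inner product vanishes; these are the same computation. One small point you leave implicit that the paper states explicitly: when you invoke totally geodesicity of $\m S$ to conclude $(\na^B_{X_1}X_2)^h$ is tangent, you should extend the tangent vectors to fields that stay tangent to $\m S$ along $\m S$ (or equivalently appeal to the tensoriality of the second fundamental form at the point).
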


\begin{proof} 
By (\ref{langle}) the horizontal fibres $B_x=B\times \{x\}\subset M$ with the induced metric
are isometric to $B$. By
\ref{horizontal} above the fibers $B_x$ are totally geodesic submanifolds of $M$.
We know that the exponential map \ $\exp:T_0B\to B$ is a diffeomorphism, hence
$\exp:T_{(0,x)}B_x\to B_x$ is also a diffeomorphism for all $x$. Furthermore
 by (\ref{langle}) we have that $B_x$ is orthogonal to $N=\{0\}\times N'$, hence
 $\exp^\perp:\m N_g\to M$ is a diffeomorphism.

We claim that the inclusion map $f:\Si= \m S\times N'\to M$ is totally geodesic. In fact let $r:B\to [0,+\infty)$ be the distance function  from the
origin. Set $\mu=(\na r)^h$. We know that $\lan \mu,\mu\ran=ds^2(\na r,\na r)=1$. Since
$\na r$ is $B$-orthogonal to $\m S$ we have that $\mu$ is orthogonal to $\m S\times\{x\}$ for
any $x\in N'$.
We also have
that $\mu=(\na r)^h$ is orthogonal to $\{p\}\times N'$ for all $p\in B$,
hence we obtained a unitary vector field $\mu$ on $M-(\{0\}\times N')$ which is orthogonal
to $\Si$. Now fix $(p,x)\in \Si$ and $Z\in T_{(p,x)}\Si$. We claim that there exists a local tangent vector
field $\bar Z$ on $\Si$ extending $Z$ such that $\bar Z=X^h+Y^v$ for some tangent vector fields $X$ on $B$ and
$Y$ on $N'$. In fact, since $\Si=\m S\times N'$ we have a unique orthogonal decomposition
$Z=Z_{h}+Z_{v}$ with $Z_{h}\in T_{(p,x)}(\m S\times\{x\})$ and $Z_{v}\in
T_{(p,x)}(\{p\}\times N')$. Extend $d\pi_B(Z_h)$ to a tangent vector field $X$ on $B$ with 
the property that $X|_{\m S}$ is tangent to $\m S$ and extend 
 $d\pi_{N'}(Z_v)$ to a tangent vector field $Y$ on $N'$. Thus we have that $\bar Z=X^h+Y^v$ is
tangent to $\Si$ and $\bar Z(p,x)=Z$.
 By using \ref{horizontal}, \ref{misto} and \ref{vertical} above we obtain that
\begin{eqnarray*}
\lan{\bar\nabla}_{\bar Z} \bar Z,\mu\ran_{(p,x)}\!\!&=\!\!&
 \lan{\bar\nabla}_{X^h} X^h,(\na r)^h\ran+2\lan\bar\nabla_{X^h}{Y^v},(\na   r)^h\ran+\lan\bar\nabla_{Y^v}{Y^v},(\na r)^h \ran\\
 &=&ds^2 \left(\nabla^B_X X, \na r\right)_p+2\lan\lan X,\eta\ran Y^v,(\na r)^h\ran - 
 \lan \lan Y,Y\ran_{N'}\, \eta^h,(\na r)^h\ran\\
&=&ds^2 \left(\nabla^B_X X, \na r\right)_p - \lan Y,Y\ran_{N'}ds^2\left(\eta,\na r\right)_p.
\end{eqnarray*}

We have that $p$ belongs to the $ds^2$-totally geodesic submanifold $\m S\subset B$. 
By construction we have that $X|_{\m S}$ is tangent to $\m S$, hence it holds that $ds^2 \left(\nabla^B_X X, \na r\right)_p=0$.
Since $\eta|_{\m S}$ is by hypothesis tangent to $\m S$ and $\na r$ is $ds^2$-orthogonal 
to $\m S$ we have that $ds^2\left(\eta,\na r\right)_p=0$. Thus we conclude that $\left\langle \bar \nabla_{\bar Z}\bar Z,\mu \right\rangle_{(p,x)}=0$, hence $f$ is totally geodesic.
\end{proof}

\end{document}